\newtheorem{thm}{Theorem}[section]
\newtheorem{lemma}[thm]{Lemma}
\newtheorem{cor}[thm]{Corollary}
\theoremstyle{definition}
\newtheorem{remark}[thm]{Remark}
\def\XXint#1#2#3{{\setbox0=\hbox{$#1{#2#3}{\int}$}
         \vcenter{\hbox{$#2#3$}}\kern-.5\wd0}}
\def\R{\mathbb{R}}
\def\e{\varepsilon}
\def\tY{\widetilde{Y}}
\numberwithin{equation}{section}
\begin{document}

\title{Uniform $W^{1, p}$ Estimates and Large-Scale Regularity \\
 for Dirichlet Problems  in Perforated Domains}

\author{
Zhongwei Shen\thanks{Supported in part by NSF grants DMS-1856235, DMS-2153585, and by Simons Fellowship.}   
 \qquad Jamison Wallace\thanks{Supported in part by NSF grants DMS-1856235 and  DMS-2153585.} 
}
\date{}
\maketitle

\begin{abstract}

In this paper we study the Dirichlet problem for Laplace's equation in a domain
$\omega_{\e, \eta}$  perforated  periodically with small holes
in $\mathbb{R}^d$, where $\e$ represents the scale of the minimal distances between holes and $\eta$ the ratio between the 
scale of sizes of holes and $\e$.
We establish $W^{1, p}$ estimates for solutions with bounding constants depending explicitly on 
$\e$ and $\eta$. The proof relies on a large-scale Lipschitz estimate for harmonic functions
in perforated domains. The results are optimal for $d\ge 2$.

\medskip

\noindent{\it Keywords}: Uniform Estimates; Large-scale Regularity; Perforated Domain; Homogenization.  

\medskip

\noindent {\it MR (2020) Subject Classification}: 35Q35;  35B27; 76D07.

\end{abstract}


\section{Introduction}\label{section-1}

This paper continues the study of the Dirichlet problem for Laplace's equation,
\begin{equation}\label{D-01}
\left\{
\aligned
-\Delta u & =F+ \text{\rm div} (f) & \quad &  \text{ in } \omega_{\e, \eta} ,\\
u & = 0 & \quad & \text{ on } \partial \omega_{\e, \eta},
\endaligned
\right.
\end{equation}
in a domain $\omega_{\e, \eta}$ perforated with small holes,
where $\e$ represents the scale of the minimal distances  between holes and $\eta $ the ratio  between  the sizes of holes and $\e$.
We are interested in the $W^{1, p} $ estimates,
\begin{equation}\label{A-B}
\|\nabla u\|_{L^p (\omega_{\e, \eta})}
\le A_p (\e, \eta) \| f\|_{L^p(\omega_{\e, \eta})}
+ B_p (\e, \eta) \| F \|_{L^p(\omega_{\e, \eta})},
\end{equation}
and
\begin{equation}\label{C-D}
\| u\|_{L^p (\omega_{\e, \eta})}
\le C_p (\e, \eta) \| f\|_{L^p(\omega_{\e, \eta})}
+ D_p (\e, \eta) \| F \|_{L^p(\omega_{\e, \eta})},
\end{equation}
for $1< p< \infty$,
with bounding constants $A_p(\e, \eta)$, $B_p(\e, \eta)$, $C_p (\e, \eta)$ and $D_p(\e, \eta)$
depending explicitly on the small parameters $\e, \eta \in (0, 1]$.

To state our main results, we let $Y=[-1/2,1/2]^d $ be a closed unit cube in $\R^d$ and $T$ 
 the closure of an open subset of $Y$.
Throughout the paper we shall assume that 
$Y\setminus  {T}$ is connected and that
\begin{equation}\label{condition-0}
B(0, c_0) \subset T\quad \text{ and } 
  \quad \text{\rm dist} (\partial T, \partial Y)\ge c_0>0
\end{equation}
for some $c_0>0$.
Define
\begin{equation}\label{omega}
\omega_{\e, \eta} = \R^d \setminus \bigcup_{k\in \mathbb{Z}^d}  \e ( k + \eta {T}),
\end{equation}
where $0<\e,  \eta\le 1$.
Roughly speaking, the periodically perforated domain $\omega_{\e, \eta}$ is obtained from 
$\R^d$ by removing  a hole $\e (k+\eta {T})$ of size  $\e \eta$ from each cube $\e (k+Y)$ of size $\e$.
The distances between holes  are bounded below by $c_0\e$.

The following are the main results in this paper.
The first  theorem deals with the case $d\ge 3$, while the second treats the case $d=2$.

\begin{thm}\label{main-thm-1}
Suppose $d\ge 3$ and  $1< p< \infty$.
Let $\omega_{\e, \eta}$ be given by \eqref{omega}, where $T$ is the closure of an open subset of $Y$ with $C^1$ boundary.
For any  $f\in L^p(\omega_{\e, \eta}; \R^d)$ and $F \in L^p(\omega_{\e, \eta})$, the Dirichlet problem \eqref{D-01}
has a unique solution $u$  in $W_0^{1, p} (\omega_{\e, \eta})$.
Moreover, the solution satisfies the estimate,
\begin{equation}\label{m-e-1}
\| \nabla u \|_{L^p(\omega_{\e, \eta} )} \le
\left\{
\aligned
& 
 C \eta^{-d |\frac12 -\frac{1}{p}|} \| f\|_{L^p(\omega_{\e, \eta})}
+C \e \eta^{1-\frac{d}{2}} \| F \|_{L^p(\omega_{\e, \eta})} & \quad & \text{ for } 1< p< 2,\\
&  C \eta^{-d |\frac12 -\frac{1}{p}|} \| f\|_{L^p(\omega_{\e, \eta})} 
 +C \e \eta^{1-d +\frac{d}{p}} \| F \|_{L^p(\omega_{\e, \eta})}
& \quad & \text{ for } 2\le  p< \infty,\\
\endaligned
\right.
\end{equation}
and
\begin{equation}\label{m-e-2}
\|  u \|_{L^p(\omega_{\e, \eta} )} \le
\left\{
\aligned
&C \e \eta^{1-\frac{d}{p}}  \| f\|_{L^p(\omega_{\e, \eta})} 
+ C \e^2 \eta^{2-d}  \| F \|_{L^p(\omega_{\e, \eta})}
& \quad & \text{ for } 1< p< 2,\\
& C \e \eta^{1-\frac{d}{2}}  \| f\|_{L^p(\omega_{\e, \eta})} 
+ C \e^2 \eta^{2-d} \| F \|_{L^p(\omega_{\e, \eta})} & \quad & \text{ for } 2\le  p< \infty,\\
\endaligned
\right.
\end{equation}
where $C$ depends on $d$, $p$ and $T$.
Furthermore, the estimates \eqref{m-e-1}-\eqref{m-e-2}  are sharp.
\end{thm}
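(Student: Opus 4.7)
The plan is to establish the base case $p=2$ first by energy methods, then extrapolate to all $p\in(1,\infty)$ using the large-scale Lipschitz estimate for harmonic functions in $\omega_{\e,\eta}$ (the central tool announced in the abstract) together with a real-variable $L^p$ argument, and finally deduce \eqref{m-e-2} from \eqref{m-e-1} via a Poincar\'e--Sobolev inequality in the perforated domain.

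For $p=2$, testing the equation against $u\in W_0^{1,2}(\omega_{\e,\eta})$ yields
\[
\int_{\omega_{\e,\eta}}|\nabla u|^{2}\,dx=\int_{\omega_{\e,\eta}} f\cdot\nabla u\,dx+\int_{\omega_{\e,\eta}} F\,u\,dx,
\]
and a Poincar\'e inequality for functions vanishing on the holes gives $\|u\|_{L^{2}(\omega_{\e,\eta})}\le C\e\eta^{1-d/2}\|\nabla u\|_{L^{2}(\omega_{\e,\eta})}$ for $d\ge 3$, reflecting that each removed hole has $(d-2)$-capacity of order $(\e\eta)^{d-2}$. Young's inequality then delivers the $p=2$ cases of \eqref{m-e-1} and \eqref{m-e-2} directly.

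For $p>2$, I would invoke the large-scale Lipschitz estimate for harmonic functions in $\omega_{\e,\eta}$, which controls mean oscillations of the gradient on balls of radius at least $\e$ by its $L^{2}$-average, with explicit $\eta$-dependence. Combined with classical $W^{1,p}$ regularity on a single perforated cell $\e(k+Y)\setminus \e(k+\eta T)$, this feeds into Shen's real-variable $L^p$ framework and upgrades the $L^2$ gradient bound to an $L^p$ bound. The factor $\eta^{-d(1/2-1/p)}$ appears when passing from cell-scale $L^p$ averages of $f$ to $L^2$ averages, while the factor $\e\eta^{1-d+d/p}$ for the $F$-term arises by writing $F=\text{div}(\widetilde f)$ via a Bogovskii-type correction on $Y\setminus\eta T$, whose operator norm carries precisely this scaling. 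The case $1<p<2$ then follows by a duality argument applied to the adjoint problem, and the self-dual symmetry of $\eta^{-d|1/2-1/p|}$ under $p\leftrightarrow p'$ is what makes the duality close; the coefficient $\e\eta^{1-d/2}$ for the $F$-term when $p<2$ emerges from pairing $F$ against a dual solution controlled by the $p=2$ Poincar\'e estimate above.

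The estimate \eqref{m-e-2} is then extracted from \eqref{m-e-1} by the relevant Poincar\'e--Sobolev inequality in $\omega_{\e,\eta}$, with constant of order $\e\eta^{1-d/p}$ for $1<p<2$ and of order $\e\eta^{1-d/2}$ for $p\ge 2$, matching the tabulated exponents. Sharpness is verified by explicit constructions, most notably the cell solution of $-\Delta v=1$ in $Y\setminus\eta T$ with $v=0$ on $\partial(\eta T)$, whose gradient and $L^p$ norms can be computed in the small-$\eta$ limit and saturate each exponent. The main obstacle is producing the large-scale Lipschitz estimate with uniform and sharp $\eta$-dependence, since classical homogenization results typically give only $\e$-uniform bounds; tracking the $\eta$-dependence cleanly through the Calder\'on--Zygmund decomposition in the real-variable step is what ultimately yields the sharp exponents in \eqref{m-e-1}--\eqref{m-e-2}.
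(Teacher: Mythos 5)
Your overall scaffolding (energy estimate at $p=2$, large-scale Lipschitz plus Shen's real-variable method for $p>2$, duality for $p<2$) matches the paper's broad outline, but several of the load-bearing steps are misdescribed or would produce non-sharp constants. First, the large-scale Lipschitz estimate (Theorem \ref{Lip-thm}) holds with a constant depending only on $d$ and $c_0$, \emph{uniformly in $\eta$}; you describe it as carrying ``explicit $\eta$-dependence,'' which misses the point. Its role is precisely to make $\|S_{\e,\eta}(0,f)\|_{L^p}\le C\|f\|_{L^p}$ with $C$ independent of $\eta$. The $\eta$-dependence in \eqref{m-e-1} instead enters through the cell-level localization (Theorem \ref{thm-local}), which requires the exterior-domain weighted Sobolev estimates of Section \ref{section-loc1} and the corrector $\psi_\eta$ of \eqref{psi}: one applies the local estimate to $u-\alpha\psi_\eta$ with $\alpha$ a local average of $u$, producing the factor $\Phi_p(\eta^{-1})$ and the term $|\alpha|\eta^{d/p-1}$. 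You omit this construction entirely; ``classical $W^{1,p}$ regularity on a single perforated cell'' does not by itself deliver the sharp $\eta$-exponent, because the constant in the local estimate must blow up at a controlled rate as $\eta\to 0$ and that rate is exactly what the corrector argument and Theorem \ref{ext-thm} encode.

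Second, your treatment of the $F$-term and of \eqref{m-e-2} has genuine gaps. The paper does not write $F=\operatorname{div}\widetilde f$ via a Bogovskii operator on $Y\setminus\eta T$; such an operator requires a mean-zero compatibility condition and the asserted operator-norm scaling $\e\eta^{1-d+d/p}$ is unsubstantiated. The paper instead uses the $L^p$ bound $\|S_{\e,\eta}(F,0)\|_{L^p}\le C\e\eta^{1-d/2}\|F\|_{L^p}$ (Lemma \ref{S-lemma-2}), and the factor $\e\eta^{1-d+d/p}$ comes from combining $\Phi_p(\eta^{-1})$, $\|S_{\e,\eta}(F,0)\|_{L^p}$, and the term $\eta^{d/p-1}\|u\|_{L^p}$. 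Third, and most importantly, your plan derives \eqref{m-e-2} from \eqref{m-e-1} via a Poincar\'e--Sobolev inequality, but this is backwards and gives the wrong constants: composing $A_p\sim\eta^{-d(1/2-1/p)}$ with a Poincar\'e constant $\sim\e\eta^{1-d/2}$ gives $\e\eta^{1-d+d/p}$, which is strictly worse than the claimed sharp $\e\eta^{1-d/2}$ when $p>2$. In the paper the logic runs the other way: the $\|u\|_{L^p}$ bound for $2\le p<\infty$ is proved first (via test functions and a Poincar\'e inequality, from \cite[Theorem 3.3]{Shen-2022}) and is then used as an \emph{input} to the $\|\nabla u\|_{L^p}$ estimate through the $\eta^{d/p-1}\|u\|_{L^p}$ term in the localization; the case $1<p<2$ of \eqref{m-e-2} is then recovered by the duality $C_p=B_{p'}$, not by Poincar\'e.
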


\begin{thm}\label{main-thm-2}
Suppose $d= 2$ and  $1< p< \infty$.
Let $\omega_{\e, \eta}$ be given by \eqref{omega}, where $T$ is the closure of an open subset of $Y$ with $C^1$ boundary.
For any  $f\in L^p(\omega_{\e, \eta}; \R^2)$ and $F \in L^p(\omega_{\e, \eta})$, the Dirichlet problem \eqref{D-01}
has a unique solution $u$  in $W_0^{1, p} (\omega_{\e, \eta})$.
Moreover, the solution satisfies the estimate,
\begin{equation}\label{m-e-1a}
\| \nabla u \|_{L^p(\omega_{\e, \eta} )} \le
\left\{
\aligned
& 
 C \eta^{-2 |\frac12 -\frac{1}{p}|} |\ln (\eta/2)|^{-\frac12}  \| f\|_{L^p(\omega_{\e, \eta})}
+C \e |\ln (\eta/2)|^{\frac12}  \| F \|_{L^p(\omega_{\e, \eta})} & \quad & \text{ for } 1< p< 2,\\
&  \| f\|_{L^2(\omega_{1, \eta})} +  C \e |\ln (\eta/2)|^{\frac12}   \| F \|_{L^2(\omega_{\e, \eta})} & \quad & \text{ for } p=2,\\
&  C \eta^{-2 | \frac12-\frac{1}{p}|}  |\ln (\eta/2)|^{-\frac12} \| f\|_{L^p(\omega_{\e, \eta})} 
 +C \e \eta^{-1 +\frac{2}{p}} \| F \|_{L^p(\omega_{\e, \eta})}
& \quad & \text{ for } 2< p< \infty,\\
\endaligned
\right.
\end{equation}
and
\begin{equation}\label{m-e-2a}
\|  u \|_{L^p(\omega_{\e, \eta} )} \le
\left\{
\aligned
&C \e \eta^{1-\frac{2}{p}}  \| f\|_{L^p(\omega_{\e, \eta})} 
+ C \e^2  |\ln (\eta/2)| \| F \|_{L^p(\omega_{\e, \eta})}
& \quad & \text{ for } 1< p< 2,\\
& C \e |\ln (\eta/2)|^{\frac12}   \| f\|_{L^p(\omega_{\e, \eta})} 
+ C \e^2  |\ln (\eta/2)| \| F \|_{L^p(\omega_{\e, \eta})} & \quad & \text{ for } 2\le  p< \infty,\\
\endaligned
\right.
\end{equation}
where $C$ depends on  $p$ and $T$.
Furthermore, the estimates \eqref{m-e-1a}-\eqref{m-e-2a} are sharp.
\end{thm}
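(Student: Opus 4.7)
My plan is to follow the overall strategy of the proof of Theorem~\ref{main-thm-1}, substituting the two-dimensional Poincar\'e inequality in perforated domains wherever the $d\ge 3$ Poincar\'e inequality was used. This single change accounts for all of the $|\ln(\eta/2)|^{1/2}$ factors appearing in \eqref{m-e-1a} and \eqref{m-e-2a}. The relevant replacement is the weighted inequality
\begin{equation*}
\|u\|_{L^2(\omega_{\e,\eta})} \le C\e |\ln(\eta/2)|^{1/2}\, \|\nabla u\|_{L^2(\omega_{\e,\eta})}
\qquad\text{for } u\in W_0^{1,2}(\omega_{\e,\eta}),
\end{equation*}
which can be proved cell-by-cell from the fact that in 2D the capacitary potential of $\eta T$ in $Y$ has Dirichlet energy of order $|\ln(\eta/2)|^{-1}$ rather than the $\eta^{d-2}$ scaling available for $d\ge 3$.

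I would first handle $p=2$ directly, testing \eqref{D-01} against $u$ and applying Cauchy--Schwarz together with the Poincar\'e inequality above; this yields both \eqref{m-e-1a} and \eqref{m-e-2a} at $p=2$. To extend to $p\neq 2$, I would combine the large-scale Lipschitz estimate for harmonic functions in $\omega_{\e,\eta}$ (proved earlier in the paper) with a Calder\'on--Zygmund real-variable argument of Shen extrapolation type; this upgrades the $L^2$ bound on $\nabla u$ against $\|f\|_{L^2}$ to $L^p$ bounds. The small-scale boundary-layer correction around each hole is handled by subtracting the solution of the Dirichlet cell problem on $Y\setminus \eta T$; in 2D this subtraction produces the prefactor $\eta^{-2|1/2-1/p|}|\ln(\eta/2)|^{-1/2}$.

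The remaining estimates involving $F$, and the $L^p$ bounds on $u$ itself, then follow by duality from the gradient-versus-$f$ estimates applied to the adjoint Dirichlet problem, composed with the 2D Poincar\'e inequality to pass from $\nabla u$ back to $u$. For sharpness, I would exhibit explicit test data modelled on the rescaled logarithmic capacitary potential of a single hole (and, for the $F$-part, a constant right-hand side on one cell), which in 2D saturate precisely the claimed $|\ln(\eta/2)|$ powers; this step mirrors the $d\ge 3$ sharpness argument with the logarithmic replacement.

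The main obstacle I anticipate is that logarithms do not scale as powers of $\eta$, so the Calder\'on--Zygmund extrapolation must be executed with the capacitary energy tracked carefully at each iteration. In particular, the interface between the $p=2$ estimate (with unit prefactor on $\|f\|_{L^2}$) and the $p\neq 2$ estimate (with prefactor $|\ln(\eta/2)|^{-1/2}$) is discontinuous, reflecting the fact that the logarithmic bound cannot be directly interpolated through the $p=2$ endpoint without losing the sharp constant; this forces the $p=2$ case to be handled as a genuinely separate endpoint rather than as a limit.
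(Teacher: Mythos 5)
Your overall strategy---the two-dimensional Poincar\'e inequality at $p=2$, the large-scale Lipschitz estimate feeding a Shen-type real-variable extrapolation for $p>2$, and a cell-by-cell localization with a capacitary corrector---does mirror the paper's architecture, and your remark about the genuine discontinuity of the constants at $p=2$ is correct. However, the final paragraph where you dispose of the $F$-estimates and the $\|u\|_{L^p}$ bounds is where the proposal breaks down.

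First, the claim that the estimates with data $F$ ``follow by duality from the gradient-versus-$f$ estimates'' is incorrect. Write $A_p$, $B_p$, $C_p$, $D_p$ for the optimal constants in \eqref{A-B}--\eqref{C-D}. The adjoint of the map $f\mapsto \nabla u$ (with $-\Delta u=\operatorname{div} f$) is again $f\mapsto\nabla u$, so duality only gives $A_p=A_{p'}$. The constant $B_p$ is dual to $C_{p'}$, not to $A_p$; neither $B_p$ nor $C_p$ can be extracted from $A_p$ by adjointness alone. In the paper, the bound $B_p\lesssim \e\,\eta^{-1+2/p}$ for $2<p<\infty$ (Theorem \ref{thm-m-2}) is obtained by re-running the entire localization machinery---the cell estimate \eqref{l-1-2d}, the corrector $\psi_\eta$, and the square-function operator $S_{\e,\eta}(F,0)$ with the $L^p$ bound \eqref{S-21}---with the source term $F$ in place of $\operatorname{div}(f)$. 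That is a separate argument, not a formal consequence of the $f$-estimate. Only then does one set $C_p=B_{p'}$.

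Second, passing from $\|\nabla u\|_{L^p}$ to $\|u\|_{L^p}$ by ``composing with the 2D Poincar\'e inequality'' does not yield the sharp constants in \eqref{m-e-2a}. For $p>2$ in $d=2$ the $L^p$-Poincar\'e constant on a perforated cell is $O(1)$ (the $p$-capacity of a small hole does not degenerate when $p>d$), so $K_p A_p\sim \eta^{2/p-1}|\ln(\eta/2)|^{-1/2}$, which is larger by a polynomial factor of $\eta^{-(1-2/p)}$ than the claimed sharp bound $C\e|\ln(\eta/2)|^{1/2}$; and for $1<p<2$ one similarly overshoots by a power of $\eta^{1-2/p}$. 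The sharp bounds \eqref{m-e-2a} are not obtained through Poincar\'e at all: $C_p$ comes from $B_{p'}$ by duality as above, and $D_p\lesssim \e^2|\ln(\eta/2)|$ is a separate energy estimate (the test-function argument of \cite[Theorem 3.3]{Shen-2022}), with its sharpness also established there. You should treat the $F$-to-$\nabla u$ estimate for $p>2$ as a second instance of the cell-localization scheme, get $C_p$ from it by duality, and cite the $D_p$ bound rather than manufacture it from a Poincar\'e step.

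One smaller point: be careful to distinguish the two correctors. The $Y$-periodic $\chi_\eta$ solving $-\Delta\chi_\eta=\eta^{d-2}$ is what powers the large-scale Lipschitz estimate, while $\psi_\eta$ (built in $d=2$ from the explicit logarithmic profile \eqref{2d-p}) is the one used in the localization; calling either of them ``the solution of the Dirichlet cell problem'' blurs the role each plays. Also, the factor $\eta^{-2|1/2-1/p|}|\ln(\eta/2)|^{-1/2}$ is not produced by the corrector subtraction alone: it arises from balancing the $\eta^{2/p-1}|\ln\eta|^{-1}$ scaling of $\|\nabla\psi_\eta\|_{L^p}$ against the $|\ln(\eta/2)|^{1/2}$ Poincar\'e gain on $\|u\|_{L^p}$.
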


We point out that the estimates  \eqref{m-e-1}-\eqref{m-e-2a} are sharp in $\e$ and $\eta$.
Indeed,  if $d\ge 3$ and the estimates \eqref{A-B} and \eqref{C-D} hold for some 
constants $A_p(\e, \eta)$, $B_p(\e, \eta)$, $C_p(\e, \eta)$ and $D_p(\e, \eta)$,
then 
\begin{equation}\label{A-low}
\aligned
A_p (\e, \eta)  & \ge c\,  \eta^{-d|\frac12-\frac{1}{p}|},\\
D_p (\e, \eta) & \ge c \, \e^2 \eta^{2-d},
\endaligned
\end{equation}
for $1< p< \infty$, 
\begin{equation}\label{B-low}
B_p (\e, \eta)\ge 
\left\{
\aligned
& c\,  \e \eta^{1-\frac{d}{2}} & \quad & \text { for } 1< p\le  2,\\
& c\,  \e \eta^{1-d +\frac{d}{p}}  & \quad &  \text{ for  } 2< p< \infty,
\endaligned
\right.
\end{equation}
and
\begin{equation}\label{C-low}
C_p (\e, \eta)\ge 
\left\{
\aligned
& c\,  \e \eta^{1-\frac{d}{p}} & \quad & \text { for } 1< p< 2,\\
& c\,  \e \eta^{1-\frac{d}{2}}  & \quad &  \text{ for  } 2\le p< \infty,
\endaligned
\right.
\end{equation}
where  $c>0$ depends only on $d$, $p$ and $T$.
Similar statements hold for the case $d=2$; the lower bounds for $A_p(\e, \eta)$, $B_p(\e, \eta)$, $C_p(\e, \eta)$ and
$D_p(\e, \eta)$ are given by the corresponding constants in \eqref{m-e-1a}-\eqref{m-e-2a} (with a different $c>0$).
The powers of $\e$ in the estimates \eqref{m-e-1}-\eqref{m-e-2a}
 are due to scaling. In fact, by rescaling,  it suffices to prove Theorems \ref{main-thm-1} and \ref{main-thm-2} for the case $\e=1$.
The sharpness in $\eta$ was proved by the first author in \cite{Shen-2022}, using a $Y$-periodic function $\chi_\eta$ that
satisfies 
$$
-\Delta \chi_\eta =\eta^{d-2} \quad \text{ in  } \omega_{1, \eta} \quad \text{ and } \quad
\chi_\eta =0 \quad  \text{ in } \R^d \setminus \omega_{1,\eta}.
$$
In \cite{Shen-2022} we  also established estimates \eqref{A-B} and \eqref{C-D} in a general non-periodic setting 
with a sharp constant  $D_p(\e, \eta)= C \e^2 \eta^{2-d}$ and almost sharp constants for $A_p(\e, \eta)$, $B_p(\e, \eta)$ and $C_p(\e, \eta)$
(up to an arbitrary small  power of $\eta$).
The main results in this paper provide a complete solution in the periodic setting for $d\ge 2$.

We now describe our approach to Theorems \ref{main-thm-1} and \ref{main-thm-2}.
By rescaling we assume $\e=1$.
Our starting point is  the estimates \eqref{m-e-2} and \eqref{m-e-2a}  for $u$
in  the case $2\le p<\infty$.
The estimates were proved in a general non-periodic setting in \cite{Shen-2022}, using a classical method of test functions and a Poincar\'e 
inequality for $W^{1, 2} (Q_1)$  functions  that vanish on $Q_1\setminus \eta T$, where $Q_R=(-R/2, R/2)^d$.
To establish the estimates \eqref{m-e-1} and \eqref{m-e-1a}   for $\nabla u$,
we consider the Dirichlet problem in a weighted Sobolev space for Laplace's equation $-\Delta u = F +\text{\rm div}(f)$ in 
an exterior domain $\R^d\setminus T$.
By localization and rescaling,  this allows us to control the $L^p$ norm of $\nabla u$ in each cell 
$k+Q_1$ by the $L^p$ norm of $u$ in a slightly larger cell $k + (1+c_0)Q_1$.

Next, to bound the localization error,
we  construct a corrector $\psi_\eta \in W^{1, 2}(Q_1)$ such that
\begin{equation}\label{psi-0}
\left\{
\aligned
-\Delta \psi_\eta & = F_\eta +\text{\rm div}(f_\eta)  & \quad & \text{ in } Q_1 \setminus \eta T,\\
\psi_\eta & =0 & \quad & \text{ in } \eta T,\\
\psi_\eta & =1 & \quad & \text{ in } Q_1\setminus B(0, 1/3),
\endaligned
\right.
\end{equation}
where $F_\eta$ and $f_\eta$ satisfy the condition $\| f_\eta \|_\infty +\| F_\eta\|_\infty \le C \eta^{d-2}$ for $d\ge 3$.
The construction of $\psi_\eta$, which is motivated by the correctors used in  \cite{Allaire-90a}, uses a solution to the exterior problem,
\begin{equation}\label{ext-0}
\left\{
\aligned
\Delta \phi_* & =0 & \quad & \text{ in } \R^d \setminus T, \\
\phi_* & =0 & \quad & \text{ in } T,\\
\phi_* & \to 1 &\quad &  \text{ as } |x| \to \infty.
\endaligned
\right.
\end{equation}
See \eqref{psi} for $d\ge 3$ and \eqref{2d-p} for $d=2$.
We  apply a localization argument to the solution  $u-\alpha \psi_\eta$ in $(1+c_0) Q_1 \setminus \eta T$, with
$$
\alpha =\fint_{(1+c_0)Q_1 \setminus B(0, 1/3)} u.
$$
With sharp estimates for $\|u \|_{L^p(\omega_{1, \eta})}$ and $\psi_\eta$,
this reduces the $L^p$ estimate of $\nabla u$ to the $L^p$ estimate for  the operator  $S_{\e, \eta}$, defined by
\begin{equation}\label{S-0a}
S_{\e, \eta} (F, f) (x) 
=\left(\fint_{x+ \e Q_2} |\nabla u|^2 \right)^{1/2},
\end{equation}
for $p>2$ ($u$ is extended by zero into the holes).
Note that $\| S_{\e, \eta} (F, f)  \|_{L^2(\R^d)} = \|\nabla u\|_{L^2(\R^d)}$.
By a real-variable argument in \cite{Shen-2005}, 
to establish the $L^p$ boundedness of the operator for $p>2$, it suffices to prove a  (weak)  reverse H\"older inequality 
  in a cube $Q$ for solutions 
of  the Dirichlet problem \eqref{D-01} with $F=0$ and $f=0$ in $4Q$.

Finally, we note that the desired reverse H\"older inequality in $L^p$ follows from a large-scale Lipschitz estimate,
\begin{equation}\label{Lip-0}
\sup_{1\le r \le R}
\left(\fint_{Q_r} |\nabla u|^2 \right)^{1/2}
\le C \left(\fint_{Q_R} |\nabla u|^2 \right)^{1/2},
\end{equation}
 for harmonic functions in 
perforated domains. By exploiting the periodicity of the domain $\omega_{1, \eta}$, 
we are able to establish the large-scale Lipschitz estimate, using an approach  taken from \cite{AKS-2020}.
The proof relies on a Caccioppoli inequality as well as a discrete Sobolev inequality in $\mathbb{Z}^d$.
The constant $C$ in \eqref{Lip-0} depends only on $d$ and $c_0$ in \eqref{condition-0}.

The paper is organized as follows.
In Section \ref{section-L} we establish a large-scale $L^\infty$ estimate for harmonic functions in $Q_R\cap \omega_{1, \eta}$
that vanish on $Q_R\cap \partial \omega_{1, \eta}$.
The large-scale Lipschitz estimate \eqref{Lip-0}  is proved in Section \ref{section-Lip}.
The $L^p$ bound with $2<p<\infty$ for  the operator $S_{\e, \eta}$ in \eqref{S-0a}   is obtained  in Section \ref{section-W}.
In Sections \ref{section-loc1} and \ref{section-loc2} we present the localization argument for solutions in $(1+c_0)Q_1 \setminus \eta T$.
The argument relies on  some weighted estimates in \cite{AGG-1997}  for an exterior problem and utilizes  the corrector $\psi_\eta$ mentioned before.
Finally, the proofs of Theorems \ref{main-thm-1} and \ref{main-thm-2} are given in Section \ref{section-p}.



\section{Large-scale $L^\infty$ estimates}\label{section-L}
 
Throughout this section we assume that $\omega_{\e, \eta}$ is given by \eqref{omega}, where
 $T$ is the closure of  an open subset of $Y$ with Lipschitz boundary.
Let $Q_R=(-R/2, R/2)^d$.
Our goal is to prove the following theorem.

\begin{thm}\label{L-thm}
Let $u\in W^{1, 2}(Q_R)$ for some $R\ge \e$.
Suppose that
\begin{equation}\label{5.0-L}
\Delta u =0 \quad \text{ in } \quad Q_R \cap \omega_{\e, \eta} \quad \text{ and } \quad
u=0 \quad \text{ in } Q_R \setminus  \omega_{\e, \eta}.
\end{equation}
Then, for $\e\le r\le R$,
\begin{equation}\label{5.0-1}
\left(\fint_{Q_r} |u|^2 \right)^{1/2}
\le C \left(\fint_{Q_R} |u|^2 \right)^{1/2},
\end{equation}
where  $C$ depends on $d$.
\end{thm}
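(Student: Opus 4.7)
The plan is to show that $|u|^2$ is subharmonic on all of $Q_R$ in the distributional sense—not merely on the perforated piece $Q_R\cap\omega_{\e,\eta}$—and then deduce the theorem from the classical sub-mean-value property for non-negative subharmonic functions. By the dilation $x\mapsto \e x$ I would first reduce to the case $\e=1$, so the statement to prove becomes $\fint_{Q_r}|u|^2 \le C\fint_{Q_R}|u|^2$ for $1\le r\le R$.

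To verify that $u^2$ is subharmonic, I would fix a non-negative test function $\psi\in C_c^\infty(Q_R)$ and test the weak equation $\Delta u=0$ against $u\psi$. The product $u\psi$ lies in $W^{1,2}(Q_R)$, is compactly supported in $Q_R$, and vanishes (in the trace sense) on $Q_R\setminus\omega_{1,\eta}$ because $u$ does; extended by zero it therefore belongs to $W_0^{1,2}(Q_R\cap\omega_{1,\eta})$ and is admissible. Expanding $\nabla(u\psi)=\psi\nabla u + u\nabla\psi$ and combining with the distributional identity $\int u^2\Delta\psi=-2\int u\,\nabla u\cdot\nabla\psi$ (valid since $u,\nabla u\in L^2(Q_R)$) yields
$$
\int_{Q_R} u^2\,\Delta\psi\,dx \;=\; 2\int_{Q_R}\psi|\nabla u|^2\,dx \;\ge\; 0,
$$
which is exactly the claim that $u^2$ is subharmonic in $Q_R$.

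From here I would invoke the sub-mean-value property, which in this setting says that $\rho\mapsto\fint_{B(0,\rho)}u^2$ is non-decreasing on $\rho\in(0,R/2]$. For $r\le R/\sqrt{d}$ I would sandwich $Q_r\subset B(0,r\sqrt{d}/2)\subset B(0,R/2)\subset Q_R$ and combine the monotonicity with the dimensional volume ratios between the cubes and balls involved to obtain the desired inequality; for $R/\sqrt{d}<r\le R$ the bound is trivial from $|Q_R|/|Q_r|\le d^{d/2}$. The main obstacle—really the only nontrivial step—is justifying that $u\psi$ is an admissible test function, which rests on both the compact support of $\psi$ inside $Q_R$ and the vanishing trace of $u$ on the hole boundaries; once that is granted, the rest is a routine volume comparison. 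Notably, the argument as sketched does not actually invoke the hypothesis $r\ge\e$, so it would in fact give a slightly stronger statement.
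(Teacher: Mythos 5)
Your argument is correct, and it takes a genuinely different and more elementary route than the paper. You observe that $u^2$ is distributionally subharmonic on all of $Q_R$ — not merely on the perforated piece — by testing the weak harmonicity against $u\psi$ and exploiting the fact that $u$ vanishes on the holes; the admissibility of $u\psi$ is indeed the one step that needs care, and your justification (compact support of $\psi$ in $Q_R$, vanishing trace of $u$ on $\partial\omega_{1,\eta}$, Lipschitz regularity of $\partial T$) is sound. Two small points would deserve a sentence in a full write-up: the chain rule $\nabla(u^2)=2u\nabla u\in L^1$ for $u\in W^{1,2}$ requires a truncation-and-limit argument since $t\mapsto t^2$ is not globally Lipschitz, and the ball-average monotonicity has to be pushed to the endpoint $\rho=R/2$ (where $B(0,R/2)$ touches $\partial Q_R$) by a limiting argument using $u^2\in L^1(Q_R)$. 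The paper instead works discretely: it passes to the cell averages $\hat{u}(z)$, invokes a discrete Sobolev inequality on $\mathbb{Z}^d$ (Lemma~\ref{ds-lemma}), controls higher discrete differences by iterating the Caccioppoli inequality (Lemma~\ref{C-lemma}), and closes with an absorption argument in the parameter $s$. Your subharmonicity proof is shorter and, as you note, drops the hypothesis $r\ge\e$, so it is marginally stronger for this theorem. What the paper's discrete framework buys is reuse: essentially the same scheme, supplemented by the periodic corrector $\chi_\eta$ and the Poincar\'e inequality \eqref{P}, is what drives the large-scale Lipschitz estimate of Theorem~\ref{Lip-thm}, where the subharmonicity trick is unavailable — $|\nabla u|^2$ is subharmonic only inside $\omega_{1,\eta}$, not across the holes, because $\nabla u$ does not vanish on $\partial\omega_{1,\eta}$.
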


The proof of Theorem \ref{L-thm}, as well as the proof of Theorem \ref{Lip-thm} in the next section,
is based on an approach found  in \cite{AKS-2020} and
relies on a Caccioppoli inequality for solutions of \eqref{5.0-L}.

For $u\in L^1(\R^d)$ and $z\in \mathbb{Z}^d$, define
\begin{equation}\label{hat}
\hat{u} (z) =\int_{z+Q_1} u(x)\, dx.
\end{equation}

\begin{lemma}\label{dp-lemma}
Let $u\in W^{1, 2} (Q_{r+2})$, where $r\ge 1$.
Then
\begin{equation}\label{dp}
\left(\fint_{Q_{r}} |u|^2 \right)^{1/2}
\le C \max_{z\in \mathbb{Z}^d \cap  {Q_{r+2}}} | \hat{u}(z)|
+ C \left(\fint_{Q_{r+2}} |\nabla u|^2 \right)^{1/2},
\end{equation}
where $C$ depends only on $d$.
\end{lemma}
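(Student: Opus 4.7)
The plan is to cover $Q_r$ by unit cubes centered at lattice points, decompose $u$ on each one into its average plus an oscillation piece, and handle the two pieces separately via Poincar\'e--Wirtinger and a crude counting bound.

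Let $\mathcal{Z}_r=\{z\in\mathbb{Z}^d : z+Q_1\subset Q_{r+2}\}$, which equals $\mathbb{Z}^d\cap Q_{r+1}$. Because $r\ge 1$, the unit cubes $\{z+Q_1:z\in\mathcal{Z}_r\}$ are pairwise disjoint (up to measure zero), are all contained in $Q_{r+2}$, and their union covers $Q_r$: indeed, any $x\in Q_r$ lies in a unique $z+Q_1$ with $z\in\mathbb{Z}^d$, and $|z_i|\le |x_i|+\tfrac{1}{2}\le (r+1)/2$, so $z\in\mathcal{Z}_r$. In particular $\mathcal{Z}_r\subset\mathbb{Z}^d\cap Q_{r+2}$, which is the set in the statement.

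Since $|z+Q_1|=1$, the number $\hat{u}(z)=\int_{z+Q_1}u$ coincides with the mean of $u$ over $z+Q_1$, so the standard Poincar\'e--Wirtinger inequality on the unit cube gives
\begin{equation*}
\int_{z+Q_1}|u-\hat{u}(z)|^2\,dx \;\le\; C\int_{z+Q_1}|\nabla u|^2\,dx
\end{equation*}
with a purely dimensional constant. Combining this with $|u|^2\le 2|u-\hat{u}(z)|^2+2|\hat{u}(z)|^2$ and summing over $z\in\mathcal{Z}_r$, while using disjointness of the $z+Q_1$ inside $Q_{r+2}$, yields
\begin{equation*}
\int_{Q_r}|u|^2\,dx \;\le\; 2C\int_{Q_{r+2}}|\nabla u|^2\,dx \;+\; 2|\mathcal{Z}_r|\,\max_{z\in\mathcal{Z}_r}|\hat{u}(z)|^2.
\end{equation*}

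Finally, $|\mathcal{Z}_r|\le(r+2)^d$ and $r\ge 1$ gives $r+2\le 3r$, hence $(r+2)^d/r^d\le 3^d$ and $1/r^d\le 3^d/(r+2)^d$; dividing by $|Q_r|=r^d$ converts both terms into averages over $Q_{r+2}$ with a dimensional constant, and taking square roots produces the claimed estimate \eqref{dp}. No real obstacle arises; the only bookkeeping point is making sure that the choice of $\mathcal{Z}_r$ simultaneously covers $Q_r$ and keeps the cubes inside $Q_{r+2}$, which is exactly what the ``$+2$'' in the statement accommodates.
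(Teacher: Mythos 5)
Your argument is correct and follows essentially the same route as the paper: cover by unit cubes $z+Q_1$ with $z\in\mathbb{Z}^d$, apply the Poincar\'e--Wirtinger inequality on each cube, sum, and convert to averages with an elementary counting bound. The only cosmetic difference is that the paper first treats odd side lengths $\ell$ (for which the unit cubes tile $Q_\ell$ exactly) and then bridges to general $r$ by choosing an odd $\ell\in[r,r+2]$, whereas you work directly with the covering family $\mathcal{Z}_r$ and absorb the slack into the constant; both are valid and equivalent. One very minor slip: $\mathcal{Z}_r$ need not coincide exactly with $\mathbb{Z}^d\cap Q_{r+1}$ when $(r+1)/2\in\mathbb{Z}$ (the cube $Q_{r+1}$ is open, so boundary lattice points are excluded), but this is immaterial since all you use is $\mathcal{Z}_r\subset\mathbb{Z}^d\cap Q_{r+2}$, which does hold.
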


\begin{proof}
Let $z\in \mathbb{Z}^d\cap Q_r$. By Poincar\'e's inequality,
\begin{equation}\label{P00}
\int_{z+ Q_1} | u|^2 \, dx  \le C |\hat{u}(z)|^2 +C \int_{z+Q_1} |\nabla u|^2\, dx,
\end{equation}
where $C$ depends only on $d$.
Let $\ell\ge 1$ be an odd integer.
By summing \eqref{P00} over $z\in \mathbb{Z}^d \cap Q_\ell $, we obtain
\begin{equation}\label{P0a}
\left(\fint_{Q_{\ell}} |u|^2 \right)^{1/2}
\le C \max_{z\in \mathbb{Z}^d \cap  {Q_{\ell}}} | \hat{u}(z)|
+ C \left(\fint_{Q_{\ell}} |\nabla u|^2 \right)^{1/2}.
\end{equation}
Finally, for any $r\ge 1$, choose an odd integer $\ell$ such that $r\le \ell \le r+2$.
It is not hard to see that \eqref{dp} follows from \eqref{P0a}.
\end{proof}

For a function  $g$ defined in $\R^d$ or $\mathbb{Z}^d$,
let
\begin{equation}\label{Delta}
\Delta_j g (x)= g(x+e_j) - g(x)
\end{equation}
for $1\le j \le d$, where $e_j=(0, \dots, 1, \dots, 0)$ with $1$ in the $j$th position.
For a multi-index $\gamma=(\gamma_1, \gamma_2, \dots, \gamma_d)$, 
we use the notation $\Delta^\gamma g=g$ if $\gamma=0$, and
$$
\Delta^\gamma g = \Delta_1^{\gamma_1} \Delta_2^{\gamma_2} \cdots \Delta_d^{\gamma_d} g
$$
if $|\gamma|\ge 1$.
Let $\partial^\ell  g = (\Delta^\gamma g  )_{|\gamma|=\ell}$ and
\begin{equation}
|\partial^\ell g |
=\left(\sum_{|\gamma|=\ell} |\Delta^\gamma g |^2 \right)^{1/2}
\end{equation}
for an integer $\ell\ge 0$.
It is not hard to see that 
\begin{equation}\label{FTC-1}
|\partial^{\ell+1}  \hat{u}(z) |
\le \left(\int_{z+3Q_1} |\nabla \partial^\ell u|^2 \, dx \right)^{1/2}
\end{equation}
for any $z\in \mathbb{Z}^d$.

The next lemma provides a discrete Sobolev inequality in $\mathbb{Z}^d$.

\begin{lemma}\label{ds-lemma}
Let $g$ be a function on $\mathbb{Z}^d$.
Then, for $R\ge 3d$, 
\begin{equation}\label{ds-1}
\max_{z\in \mathbb{Z}^d \cap {Q}_{R}} |g(z)|
\le C \sum_{\ell=0}^N R^\ell
\left(\frac{1}{R^d}
\sum_{z\in \mathbb{Z}^d \cap {Q}_{2R}}
|\partial^\ell g(z)|^2 \right)^{1/2},
\end{equation}
where $N=[d/2]+1$ and $C$ depends only on $d$,
\end{lemma}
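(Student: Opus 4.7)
My plan is to reduce (2.8) to the continuous Sobolev embedding $W^{N,2}(Q_R) \hookrightarrow L^\infty(Q_R)$, valid for any integer $N > d/2$ and in particular for $N = [d/2]+1$. To apply it to the purely discrete function $g$, I would construct a smooth extension $G : \mathbb{R}^d \to \mathbb{R}$ of $g$ whose values on $\mathbb{Z}^d$ recover $g$ and whose pointwise derivatives of order $\ell$ are controlled by $\ell$-th order finite differences of $g$ at nearby lattice points.

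For the extension, fix a smooth compactly supported $\phi \in C_c^\infty(Q_4)$ with $\phi(0) = 1$ and $\phi(z) = 0$ for every $z \in \mathbb{Z}^d \setminus \{0\}$ (for example, a tensor product of one-dimensional bumps), and set
\[
G(x) \;:=\; \sum_{z \in \mathbb{Z}^d} g(z)\,\phi(x - z).
\]
Then $G \in C^\infty(\mathbb{R}^d)$, the sum $D^\alpha G(x) = \sum_z g(z)\,D^\alpha\phi(x-z)$ has only $O(1)$ nonzero terms at each $x$, and $G(z)=g(z)$ for $z \in \mathbb{Z}^d$, so
\[
\max_{z \in \mathbb{Z}^d \cap Q_R} |g(z)| \;\le\; \|G\|_{L^\infty(Q_R)}.
\]

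The main technical step is the pointwise derivative--difference bound
\[
|D^\alpha G(x)|^2 \;\le\; C \sum_{z \in \mathbb{Z}^d,\ |z-x|_\infty \le C_d} |\partial^{|\alpha|} g(z)|^2 \qquad (|\alpha| \le N),
\]
which on integration in $x$ yields $\|D^\alpha G\|_{L^2(Q_R)}^2 \le C \sum_{z \in \mathbb{Z}^d \cap Q_{2R}} |\partial^{|\alpha|} g(z)|^2$ for $R \ge 3d$. The idea, for $|\alpha| = \ell$, is to subtract from $g$ its discrete Taylor polynomial of degree $<\ell$ centered at a lattice point $z_0$ near $x$: the polynomial part contributes a smooth background which, provided $\phi$ is arranged to satisfy the Strang--Fix polynomial reproduction condition up to order $N$ (taking $\phi$ to be a tensor-product B-spline works), has vanishing $\ell$-th derivative, while the remainder rearranges into a bounded linear combination of $\ell$-th order finite differences $\partial^\gamma g(z')$ at $O(1)$ lattice points near $x$. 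This derivative--difference comparison is the main obstacle; a cleaner alternative is to bypass the extension altogether and proceed by induction on $d$, starting from the one-dimensional telescoping identity $|g(z+1)|^2 - |g(z)|^2 = (g(z+1)+g(z))\Delta g(z)$ combined with Cauchy--Schwarz and an average over the telescope's starting point (the $d=1$, $N=1$ case of (2.8) follows directly this way).

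Finally, combining $\max_z |g(z)| \le \|G\|_{L^\infty(Q_R)}$ with the rescaled continuous Sobolev embedding
\[
\|G\|_{L^\infty(Q_R)} \;\le\; C \sum_{\ell=0}^N R^{\ell - d/2}\,\|D^\ell G\|_{L^2(Q_R)}
\]
(valid since $N > d/2$) and the derivative--difference bound above produces (2.8) as stated, since $R^\ell \bigl(R^{-d}\sum |\partial^\ell g|^2\bigr)^{1/2} = R^{\ell - d/2} \bigl(\sum |\partial^\ell g|^2\bigr)^{1/2}$.
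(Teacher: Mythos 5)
Your strategy---extend $g$ to a smooth $G$ on $\mathbb{R}^d$, apply the rescaled Sobolev embedding $W^{N,2}(Q_R)\hookrightarrow L^\infty(Q_R)$, and trade each $D^\alpha G$ for $|\alpha|$-th order finite differences of $g$---is a standard and reasonable route to a discrete Sobolev inequality. It is genuinely different from what the paper does: the paper offers no argument at all and simply cites Stevenson's Lemma~2.6. You also correctly identify the derivative--difference comparison as the crux.

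But as written there is a real gap, and it sits exactly at that crux, in the choice of $\phi$. You simultaneously require (i) $\phi\in C_c^\infty$, (ii) lattice interpolation $\phi(0)=1$, $\phi(z)=0$ for $z\in\mathbb{Z}^d\setminus\{0\}$ (used to get $G(z)=g(z)$ and hence $\max|g|\le\|G\|_{L^\infty}$), and (iii) Strang--Fix polynomial reproduction of degree $\le N$ (used to annihilate the degree-$(|\alpha|-1)$ discrete Taylor polynomial so that $D^\alpha G$ reduces to $|\alpha|$-th differences). A generic tensor product of one-dimensional bumps satisfying (ii) has no reason to satisfy (iii) beyond degree $0$, and the fix you offer---``a tensor-product B-spline works''---destroys both (i) and (ii): cardinal B-splines of degree $\ge 2$ are only $C^{m-1}$ piecewise polynomials and do not vanish at nonzero integers, so $G(z)\ne g(z)$. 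The cleanest repair is to drop (ii) entirely: take $\phi$ a tensor-product B-spline of degree $N$ (compactly supported, nonnegative, $\sum_z\phi(\cdot-z)\equiv 1$, in $W^{N,\infty}_{\rm loc}$, reproducing polynomials of degree $\le N$), and control the quasi-interpolation error directly via $g(z)-G(z)=\sum_{z'}\bigl(g(z)-g(z')\bigr)\phi(z-z')$, which is bounded by first-order differences of $g$ near $z$ and hence absorbed into the $\ell=1$ term of the right-hand side (recall $R\ge 1$). With that modification, together with the derivative--difference step carried out explicitly by subtracting the degree-$(|\alpha|-1)$ discrete Taylor polynomial at the nearest lattice point, your argument closes. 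Your alternative---induction on $d$ via one-dimensional telescoping---is also not immediate: iterating coordinate by coordinate naively costs $d$ orders of differences, not $[d/2]+1$, so recovering the sharp $N$ would require a genuinely multidimensional (mixed-norm) ingredient rather than a straight one-dimensional induction.
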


\begin{proof}
This follows from  \cite[Lemma 2.6]{Stevenson-1991}.
\end{proof}

The following lemma gives the Caccioppoli inequality for solutions of 
$-\Delta u=F$ in $Q_r\cap \omega_{1, \eta}$ with $u=0$ on $Q_r\setminus \omega_{1, \eta}$.

\begin{lemma}\label{C-lemma}
Let $u \in W^{1, 2}(Q_r)$ for some $r\ge 1$.
Suppose that $-\Delta u=F$ in $Q_r \cap \omega_{1, \eta}$ and $u=0$ in  $Q_r \setminus \omega_{1, \eta}$.
Then
\begin{equation}\label{C}
\left(\fint_{Q_{sr}} |\nabla u|^2 \right)^{1/2}
\le \frac{C}{(t-s) r}
\left(\fint_{Q_{tr}} |u|^2 \right)^{1/2}
+ C (t-s) r \left(\fint_{Q_{tr} } |F|^2 \right)^{1/2}
\end{equation}
for $(1/2) \le s< t\le  1$, where $C$ depends only on $d$.
\end{lemma}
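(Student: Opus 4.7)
The plan is to prove this by a standard Caccioppoli-type test function argument, adapted to the perforated setting by exploiting the hypothesis that $u$ vanishes on $Q_r\setminus \omega_{1,\eta}$. The only structural input is that $u\varphi^2$ is a legitimate test function in the weak formulation of $-\Delta u = F$ on $Q_r\cap \omega_{1,\eta}$.

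First, I would fix a cutoff $\varphi\in C_c^\infty(Q_{tr})$ with $\varphi\equiv 1$ on $Q_{sr}$, $0\le \varphi\le 1$, and $|\nabla \varphi|\le C/((t-s)r)$. Since $u=0$ on $Q_r\setminus \omega_{1,\eta}$, the product $u\varphi^2$ extends by zero across the holes to a function in $W_0^{1,2}(Q_r\cap \omega_{1,\eta})$ with compact support inside $Q_{tr}$. Therefore it is admissible as a test function for the weak equation $-\Delta u=F$ on $Q_r\cap \omega_{1,\eta}$, and extending all integrals by zero into the holes one gets
\begin{equation*}
\int_{Q_{tr}} |\nabla u|^2 \varphi^2 \, dx
= -2\int_{Q_{tr}} \varphi\, u\, \nabla u\cdot \nabla \varphi\, dx + \int_{Q_{tr}} F\, u\, \varphi^2\, dx.
\end{equation*}

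Next, I would estimate the two terms on the right by Young's inequality, tuned so that all $|\nabla u|^2\varphi^2$ mass can be absorbed into the left-hand side. For the cross term,
$2|\varphi\, u\, \nabla u\cdot \nabla \varphi|\le \tfrac12 |\nabla u|^2\varphi^2 + C u^2|\nabla \varphi|^2$, and for the forcing term I would use a weight of $(t-s)r$:
\begin{equation*}
|F u \varphi^2|\le \frac{u^2\varphi^2}{(t-s)^2 r^2} + C(t-s)^2 r^2\, F^2.
\end{equation*}
Absorbing $\tfrac12 \int |\nabla u|^2\varphi^2$ to the left and inserting the bound $|\nabla \varphi|^2\le C/((t-s)^2 r^2)$ gives
\begin{equation*}
\int_{Q_{sr}} |\nabla u|^2 \, dx \le \frac{C}{(t-s)^2 r^2}\int_{Q_{tr}} u^2\, dx + C(t-s)^2 r^2 \int_{Q_{tr}} F^2\, dx.
\end{equation*}

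Finally, since $\tfrac12 \le s<t\le 1$, the volume ratio $|Q_{tr}|/|Q_{sr}|$ is bounded by a dimensional constant, so dividing by $|Q_{sr}|$ and taking square roots yields \eqref{C}. The only point requiring any care is the first one, namely verifying that $u\varphi^2$ truly lies in $W_0^{1,2}(Q_r\cap \omega_{1,\eta})$; this follows from the hypothesis $u\in W^{1,2}(Q_r)$ with $u=0$ on $Q_r\setminus \omega_{1,\eta}$, which forces $\nabla u=0$ a.e.\ on the holes and allows us to treat the extension trivially. Beyond that, the argument is the classical Caccioppoli inequality and the bounds on $\varphi$, $|\nabla \varphi|$, and Young's inequality do the rest.
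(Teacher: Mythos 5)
Your proof is correct and follows essentially the same route as the paper: the paper likewise tests the weak formulation with $u\varphi^2$, uses the fact that $u$ vanishes off $\omega_{1,\eta}$ to identify the integrals over $Q_{tr}\cap\omega_{1,\eta}$ with integrals over $Q_{tr}$, and then applies the Cauchy/Young inequality to absorb the gradient cross-term. The only difference is that you spell out the admissibility of $u\varphi^2$ and the final volume-ratio step, which the paper treats as routine.
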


\begin{proof}
The proof is  exactly the same as that for the usual Caccioppoli inequality.
Choose a cut-off function $\varphi \in C_0^\infty (Q_{tr})$ such that
$\varphi=1$ in $Q_{sr}$ and $|\nabla \varphi |\le C (t-s)^{-1}r^{-1} $.
Note that
$$
\aligned
\int_{Q_{tr}} \nabla u \cdot \nabla (u\varphi^2)\, dx
& =\int_{Q_{tr}  \cap \omega_{1, \eta}} \nabla u \cdot \nabla (u\varphi^2)\, dx\\
&=\int_{Q_{tr} \cap \omega_{1, \eta}} F (u\varphi^2)\, dx
=\int_{Q_{tr} } F (u\varphi^2)\, dx.
\endaligned
$$
Hence,
$$
\int_{Q_{tr}} |\nabla u|^2 \varphi^2\, dx
=- 2\int_{Q_{tr} } \varphi ( \nabla u \cdot \nabla \varphi) u\, dx+
\int_{Q_{tr} } F (u\varphi^2)\, dx,
$$
which, by the Cauchy inequality, yields \eqref{C}.
\end{proof}

\begin{proof}[\bf Proof of Theorem \ref{L-thm}]
By rescaling we may assume $\e=1$.
Let $u\in W^{1, 2} (Q_R)$ for some $R\ge 1$.
Suppose that $\Delta u=0$ in $Q_R\cap \omega_{1, \eta}$ and $u=0$ in $Q_R \setminus \omega_{1, \eta}$.
To prove \eqref{5.0-1},
without loss of generality,  we may assume $R\ge \delta^{-2}  $, where $\delta=\delta(d)>0$ is sufficiently small
(the case $1\le R \le \delta^{-2}$ is trivial).
Let $1\le r\le \delta R$.
By applying  the discrete Sobolev inequality \eqref{ds-1} to $g(z)= \hat{u}(z)$, we obtain
\begin{equation}\label{L-50}
\aligned
\max_{z\in \mathbb{Z}^d\cap  Q_{r+2}} |\hat{u}(z)|
 & \le \max_{z\in \mathbb{Z}^d \cap Q_{2\delta R} } |\hat{u}(z)|\\
 &  \le C \sum_{\ell=0}^N
 R^\ell \left(\frac{1}{R^d} \sum_{z\in \mathbb{Z}^d\cap Q_{4\delta R}} |\partial^\ell \hat{u}(z)|^2 \right)^{1/2}\\
 &\le C \left(\fint_{Q_{5\delta R}} |u|^2 \right)^{1/2}
 + C \sum_{\ell=1}^N
 R^\ell \left(\fint_{Q_{5\delta R }} |\nabla \partial^{\ell-1}  u|^2 \right)^{1/2},
 \endaligned
 \end{equation}
 where $N=[d/2] +1$ and  we have used \eqref{FTC-1} for the last inequality.
 
Next, let $v=\Delta^\gamma u$, where $\gamma $ is a multi-index with $1\le |\gamma |=\ell\le d$.
Observe that $\Delta v=0$ in $Q_{R-3 \ell}\cap \omega_{1, \eta} $ and $v=0$ in $Q_{R-3\ell}\setminus \omega_{1, \eta}$.
By applying Lemma \ref{C-lemma}  to $v$ we see that
$$
\aligned
\left(\fint_{Q_\rho} |\nabla \partial^\ell u|^2 \right)^{1/2}
& \le \frac{C}{\rho}
\left(\fint_{Q_{2\rho}}  |\partial^\ell u|^2 \right)^{1/2}\\
&\le  \frac{C}{\rho}
\left(\fint_{Q_{2\rho +3}} |\nabla \partial^{\ell-1} u|^2 \right)^{1/2}
\endaligned
$$
for $1\le \rho \le (R-3d )/2$, where, for the last step,  we have used the inequality 
\begin{equation}\label{FTC}
\left(\int_{y+Q_1} |\Delta_j u|^2 \right)^{1/2}
\le C \left(\int_{y+ 3Q_1} |\nabla u|^2 \right)^{1/2}.
\end{equation}
By induction it follows that 
\begin{equation}\label{C-ind}
\left(\fint_{Q_{\rho  }} |\nabla \partial^\ell u|^2 \right)^{1/2}
\le \frac{C}{\rho ^\ell}
\left(\fint_{Q_{C\rho} } |\nabla u|^2\right)^{1/2},
\end{equation}
where  $0\le \ell \le d$ and $C$ depends only on $d$.
This, together with Lemma \ref{dp-lemma} and \eqref{L-50}, shows that
for any $1\le r \le \delta R$,
$$
\aligned
\left(\fint_{Q_r} |u |^2 \right)^{1/2}
 & \le
C \left(\fint_{Q_R} |u|^2 \right)^{1/2}
+ C R  \left(\fint_{Q_{R/2}} |\nabla u|^2 \right)^{1/2}
+ C \left(\fint_{Q_{3r}} |\nabla u|^2\right)^{1/2}\\
& \le C \left(\fint_{Q_R} |u|^2 \right)^{1/2}
+ \frac{C}{r} \left(\fint_{Q_{4r}} |u|^2 \right)^{1/2}.
\endaligned
$$
It follows that
$$
\sup_{s\le r \le R}
\left(\fint_{Q_r} |u|^2 \right)^{1/2}
\le C \left(\fint_{Q_R} |u|^2 \right)^{1/2}
+ \frac{C}{s} \sup_{s\le r \le R} \left(\fint_{Q_r} |u|^2 \right)^{1/2},
$$
for any $1< s\le R$, where $C$ depends only on $d$.
By choosing $s=s(d)>1$ sufficiently  large we obtain
\begin{equation}\label{L-i-1}
\sup_{s\le r\le R} 
\left(\fint_{Q_r} |u|^2 \right)^{1/2}
\le C \left(\fint_{Q_R} |u|^2 \right)^{1/2}.
\end{equation}
Finally, we note that
$$
\sup_{1\le r \le  s} \left(\fint_{Q_r} |u|^2 \right)^{1/2}
\le C_s \left(\fint_{Q_s} |u|^2 \right)^{1/2}
\le C \left(\fint_{Q_R} |u|^2 \right)^{1/2},
$$
where we have used \eqref{L-i-1} for the last  step.
This, together with \eqref{L-i-1}, gives \eqref{5.0-1}.
\end{proof}



\section{Large-scale Lipschitz estimates}\label{section-Lip}

In this section we establish a large-scale Lipschitz estimate.
Recall that $Q_R =(-R/2, R/2)^d$.
As in the last section, we assume $\omega_{\e, \eta}$ is given by \eqref{omega}, where $T$ is the closure of an open subset of $Y$
with Lipschitz boundary.

\begin{thm}\label{Lip-thm}
Let $u\in W^{1, 2}(Q_R)$ for some $R\ge \e$.
Suppose that
\begin{equation}\label{5.0-0}
\Delta u =0 \quad \text{ in } \quad Q_R \cap \omega_{\e, \eta} \quad \text{ and } \quad
u=0 \quad \text{ in } Q_R \setminus  \omega_{\e, \eta}.
\end{equation}
Then, for $\e \le r\le  R$,
\begin{equation}\label{5.0-2}
\left(\fint_{Q_r} |\nabla u|^2 \right)^{1/2}
\le C \left(\fint_{Q_R} |\nabla u|^2 \right)^{1/2},
\end{equation}
where  $C$ depends on $d$ and $c_0$ in \eqref{condition-0}.
\end{thm}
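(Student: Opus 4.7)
My plan is to follow the same scheme as in the proof of Theorem \ref{L-thm}, running the argument one differentiation higher. By rescaling we may assume $\varepsilon=1$; the case $R\le R_0(d,c_0)$ follows directly from Lemma \ref{C-lemma}, so we may take $R$ large.

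The pivotal structural observation is the $\mathbb{Z}^d$-periodicity of $\omega_{1,\eta}$: for every multi-index $\gamma$ the function $v=\Delta^\gamma u$ satisfies $\Delta v=0$ in $Q_{R-|\gamma|}\cap \omega_{1,\eta}$ and $v=0$ in $Q_{R-|\gamma|}\setminus \omega_{1,\eta}$. Consequently all the results of Section \ref{section-L}---in particular Theorem \ref{L-thm}, Lemma \ref{C-lemma}, and the iterated Caccioppoli bound \eqref{C-ind}---apply to $v$ in place of $u$. Applying Theorem \ref{L-thm} to $v_j:=\Delta_j u$ and using \eqref{FTC} already yields
\[
\left(\fint_{Q_r}|\Delta_j u|^2\right)^{1/2}
 \le C\left(\fint_{Q_{R/2}}|\Delta_j u|^2\right)^{1/2}
 \le C\left(\fint_{Q_R}|\nabla u|^2\right)^{1/2}, \quad 1\le r\le R/2.
\]

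To upgrade this bound on discrete differences to a bound on the true gradient, I would re-run the chain of inequalities from the proof of Theorem \ref{L-thm}. Caccioppoli (Lemma \ref{C-lemma}) followed by the discrete Poincar\'e inequality (Lemma \ref{dp-lemma}) gives, for $1\le r\le R/4$,
\[
\left(\fint_{Q_r}|\nabla u|^2\right)^{1/2}
 \le \frac{C}{r}\left(\fint_{Q_{2r}}|u|^2\right)^{1/2}
 \le \frac{C}{r}\max_{z\in \mathbb{Z}^d\cap Q_{2r+2}}|\hat u(z)|
 +\frac{C}{r}\left(\fint_{Q_{2r+2}}|\nabla u|^2\right)^{1/2}.
\]
Writing $\Phi(r):=\left(\fint_{Q_r}|\nabla u|^2\right)^{1/2}$, the key point is to bound $\max|\hat u|$ in terms of $\Phi(R)$. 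Since $\Delta_j\hat u(z)=\widehat{\Delta_j u}(z)$, one writes $\hat u(z)-\hat u(0)$ as a telescoping sum of $\Delta_j\hat u$ along a path from $0$ to $z$, and bounds $\max|\Delta_j\hat u|$ by applying the discrete Sobolev inequality (Lemma \ref{ds-lemma}) to the $\mathbb{Z}^d$-function $z\mapsto \Delta_j\hat u(z)$, combined with \eqref{FTC-1} and \eqref{C-ind}. The crucial cancellation is that the outer factors $R^\ell$ from the discrete Sobolev inequality exactly offset the factors $R^{-\ell}$ from iterated Caccioppoli, producing a clean bound $\max_{z\in \mathbb{Z}^d\cap Q_R}|\Delta_j\hat u(z)|\le C\Phi(R)$. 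A self-improving absorption argument on $\sup_{s\le r\le R}\Phi(r)$, identical to the one at the end of the proof of Theorem \ref{L-thm}, then closes the estimate.

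The main obstacle is to preserve uniformity in $\eta$ throughout: a naive cell-wise Poincar\'e inequality for functions vanishing on the holes degenerates like $\eta^{-(d-2)/2}$ as $\eta\to 0$, which would ruin the estimate. This is precisely why the proof must be run at the level of the discrete differences $\Delta^\gamma u$---they themselves solve the same Dirichlet boundary value problem and so inherit the $\eta$-uniform estimates of Section \ref{section-L}, while the remaining manipulations (discrete Sobolev and discrete Poincar\'e) are purely combinatorial. The final constant $C$ then depends only on $d$ and $c_0$, as required.
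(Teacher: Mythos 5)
There is a genuine gap in your argument, and it is located exactly at the place you flag as "the key point": bounding $\max_{z}|\hat u(z)|$ in terms of $\Phi(R):=\big(\fint_{Q_R}|\nabla u|^2\big)^{1/2}$. The telescoping plus discrete Sobolev plus iterated Caccioppoli chain does indeed give the cancellation $R^\ell\cdot R^{-\ell}$ and yields $\max_{z}|\hat u(z)-\hat u(0)|\le Cr\,\Phi(R)$; that part of your plan is fine. But it leaves $|\hat u(0)|$ untouched. After Caccioppoli and discrete Poincar\'e you have
\[
\Phi(r)\ \le\ \frac{C}{r}\,|\hat u(0)| + C\,\Phi(R) + \frac{C}{r}\,\Phi(2r+2),
\]
and the only way to bound $|\hat u(0)|$ using the tools from Section~\ref{section-L} is via Theorem~\ref{L-thm} followed by Lemma~\ref{P-lemma}, which produces $|\hat u(0)|\le C\eta^{(2-d)/2}\Phi(R)$. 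The resulting contribution $\frac{C}{r}\eta^{(2-d)/2}\Phi(R)$ is not absorbable: at $r$ of order one it blows up as $\eta\to 0$. Working with the discrete differences $\Delta^\gamma u$ does not help here, because the troublesome term is the \emph{constant} $\hat u(0)$, and one cannot simply subtract a constant from $u$ without destroying the zero boundary condition on the holes, which is exactly what makes Caccioppoli and the iterated bound \eqref{C-ind} valid. (A concrete test: take $u\approx\chi_\eta$ locally; then $\Phi(R)\sim\eta^{(d-2)/2}$ while $|\hat u(0)|\sim 1$, so the bound $\Phi(1)\le\frac{C}{1}|\hat u(0)|+\dots$ degenerates to $\Phi(1)\lesssim 1$ even though the desired conclusion is $\Phi(1)\lesssim\eta^{(d-2)/2}$.)

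The paper resolves this by introducing the periodic corrector $\chi_\eta$ of \eqref{corrector} and replacing $u$ by $w=u-\alpha\chi_\eta$, with $\alpha$ chosen so that $\hat w(0)=0$. Because $\chi_\eta$ also vanishes on the holes, $w$ satisfies the boundary condition, and because $\chi_\eta$ is $Y$-periodic, $\partial^\ell w=\partial^\ell u$ for $\ell\ge 1$, so the discrete Sobolev/iterated Caccioppoli chain you describe goes through \emph{for $w$} with zero constant term. The price is that $w$ is no longer harmonic ($-\Delta w=-\alpha\eta^{d-2}$ in $\omega_{1,\eta}$), producing error terms $|\alpha|\eta^{d-2}R$ and $|\alpha|\big(\fint_{Q_1}|\nabla\chi_\eta|^2\big)^{1/2}$ in \eqref{L-103}. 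These are then controlled by a two-case split at the threshold $R_\eta=\eta^{(2-d)/2}$ (resp.\ $|\ln(\eta/2)|^{1/2}$ for $d=2$), using Lemma~\ref{c-lemma}, the large-scale $L^\infty$ estimate \eqref{5.0-1}, and the Poincar\'e inequality \eqref{P}/\eqref{P-2}. Your proposal contains neither the corrector $\chi_\eta$ nor the two-case split; without them the absorption argument cannot close.
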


We begin with a Poincar\'e inequality. 

\begin{lemma}\label{P-lemma}
Suppose that $u\in W^{1, 2}(Q_R)$ and $u=0$ on $Q_R\setminus \omega_{1, \eta} $, where  $R\ge 1$ is an odd integer.
Then 
\begin{equation}\label{P}
\int_{Q_R} |u|^2\, dx \le C \eta^{2-d} \int_{Q_R} |\nabla u|^2\, dx
\end{equation}
for $d\ge 3$, and
\begin{equation}\label{P-2}
\int_{Q_R} |u|^2\, dx
\le C |\ln (\eta/2) | \int_{Q_R} |\nabla u|^2\, dx
\end{equation}
for $d=2$, where $C$ depends only on $d$ and $c_0$.
\end{lemma}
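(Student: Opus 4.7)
The plan is to reduce both \eqref{P} and \eqref{P-2} to a single-cell capacitary Poincar\'e estimate by exploiting the $\mathbb{Z}^d$-periodicity of $\omega_{1,\eta}$, and then to establish the cell estimate via Maz'ya's capacity--Poincar\'e inequality together with the classical capacity of a small ball.

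First, since $R$ is an odd integer, the cube $Q_R$ decomposes as the essentially disjoint union
\[
Q_R = \bigcup_{z \in \mathbb{Z}^d \cap Q_R} (z + Q_1),
\]
and $\mathbb{Z}^d$-periodicity of $\omega_{1,\eta}$ implies that $u|_{z+Q_1}$ lies in $H^1(z+Q_1)$ and vanishes on the shifted hole $z + \eta T$ in each cell. Summing over the $R^d$ unit cubes reduces \eqref{P}--\eqref{P-2} to the single-cell inequality
\[
\int_{Q_1} v^2\,dx \le C\,\kappa_d(\eta) \int_{Q_1} |\nabla v|^2\,dx
\]
for all $v \in H^1(Q_1)$ with $v = 0$ on $\eta T$, where $\kappa_d(\eta) = \eta^{2-d}$ for $d \ge 3$ and $\kappa_d(\eta) = |\ln(\eta/2)|$ for $d = 2$.

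To prove the cell estimate I would invoke a capacitary Poincar\'e inequality of Maz'ya type: if $v \in H^1(Q_1)$ vanishes on a compact set $E \subset Q_1$ with $\operatorname{cap}(E, Q_2) \le 1$, then
\[
\int_{Q_1} v^2\,dx \le \frac{C}{\operatorname{cap}(E, Q_2)} \int_{Q_1} |\nabla v|^2\,dx,
\]
where $\operatorname{cap}(\cdot, Q_2)$ denotes the standard Dirichlet capacity relative to the enlarged cube $Q_2$. Because $B(0,c_0) \subset T$ gives $B(0, c_0\eta) \subset \eta T$, monotonicity of capacity yields $\operatorname{cap}(\eta T, Q_2) \ge \operatorname{cap}(B(0, c_0\eta), Q_2)$. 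A direct computation with the fundamental solution of $-\Delta$ then shows
\[
\operatorname{cap}(B(0, c_0\eta), Q_2) \ge \begin{cases} c\,\eta^{d-2}, & d \ge 3, \\ c/|\ln(\eta/2)|, & d = 2, \end{cases}
\]
for $\eta$ small, which when plugged into the capacitary Poincar\'e inequality yields the announced factor $\kappa_d(\eta)$ in each dimension.

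The main obstacle is the cell-level capacitary Poincar\'e inequality: it is what encodes the full dichotomy between the Newtonian regime ($d \ge 3$) and the logarithmic regime ($d = 2$) that distinguishes \eqref{P} from \eqref{P-2} and sets the sharp $\eta$-dependence. The summation over cells and the small-ball capacity computations are then routine. A technical subtlety worth flagging is that $v$ lies in $H^1(Q_1)$ rather than $H^1_0(Q_1)$, so the capacitary bound must be applied with the \emph{relative} Dirichlet capacity (or an equivalent formulation via reflection across $\partial Q_1$), which is precisely why the enlarged cube $Q_2$ appears above.
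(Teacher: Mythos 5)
Your proposal is correct and follows essentially the same route as the paper: reduce \eqref{P}--\eqref{P-2} to the single-cell case $R=1$ by covering $Q_R$ with the unit cubes $z+Q_1$, $z\in\mathbb{Z}^d\cap Q_R$, and then invoke the cell-level Poincar\'e inequality for functions vanishing on $\eta T$. The only difference is that the paper simply cites references (Allaire; Shen, Lemma 2.1) for the cell estimate, whereas you supply a self-contained proof of it via Maz'ya's capacitary Poincar\'e inequality and the small-ball capacity asymptotics, which is a standard and valid way to obtain exactly the constants $\eta^{2-d}$ and $|\ln(\eta/2)|$.
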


\begin{proof}
The case $R=1$ is well known. See e.g. \cite[p.270]{Allaire-90} or \cite[Lemma 2.1]{Shen-2022} for a proof.
The general case follows  by covering $Q_R$ with unit cubes $\{ k + Q_1: k \in \mathbb{Z}^d\cap Q_R\}$.
\end{proof}

Following \cite{Jing-2020}, we introduce a $Y$-periodic function $\chi_\eta$ in $\R^d$ that satisfies 
\begin{equation}\label{corrector}
-\Delta \chi_\eta =\eta^{d-2} \quad \text{ in } \omega_{\e, \eta}  \quad \text{ and } \quad \chi_\eta =0 \quad \text{ in } \R^d \setminus \omega_{\e, \eta}.
\end{equation}
Let $H^1_{per}(Q_1)$ denote the closure in $H^1(Q_1)$  of the set of smooth $Y$-periodic functions in $\R^d$.
The existence and uniqueness of $\chi_\eta$ may be proved by using the Lax-Milgram Theorem on a Hilbert space
$H$, given by the closure of $\{ u \in H_{per}^1(Q_1): u=0 \text{ on } \eta T \}$ in $H^1_{per}(Q_1)$.

\begin{lemma}\label{c-lemma}
Let $\chi_\eta$ be given by \eqref{corrector}.
Then
\begin{equation}\label{c-10}
\aligned
C_1 \eta^{\frac{d-2}{2}}   \le &  \left(\fint_{Q_1} |\nabla \chi_\eta|^2 \right)^{1/2} \le C_2 \eta^{\frac{d-2}{2}},\\
C_1 \le  & \fint_{Q_1} \chi_\eta \le C_2,
\endaligned
\end{equation}
for $d\ge 3$, and
\begin{equation}\label{c-11}
\aligned
C_1 |\ln (\eta/2)|^{\frac12}  \le &  \left(\fint_{Q_1} |\nabla \chi_\eta|^2 \right)^{1/2} \le C_2  |\ln (\eta/2)|^{\frac12},\\
C_1 |\ln (\eta/2)| \le  & \fint_{Q_1} \chi_\eta \le C_2 |\ln (\eta/2)|,
\endaligned
\end{equation}
for $d=2$, where $C_1, C_2>0$ depend only on $d$ and $c_0$.
\end{lemma}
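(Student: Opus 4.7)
The plan is to couple the variational characterization of $\chi_\eta$ with a capacitary test function. First I would multiply the equation $-\Delta \chi_\eta = \eta^{d-2}\mathbf{1}_{\omega_{1,\eta}}$ by $\chi_\eta$ itself and integrate over $Q_1$: $Y$-periodicity kills the boundary term on $\partial Q_1$ and the vanishing of $\chi_\eta$ on $\eta T$ kills the one on $\partial (\eta T)$, yielding the fundamental identity $E := \int_{Q_1} |\nabla \chi_\eta|^2\,dx = \eta^{d-2} \int_{Q_1} \chi_\eta\,dx =: \eta^{d-2} M$. Combining this with Lemma \ref{P-lemma} at $R=1$ and the Cauchy--Schwarz bound $M \le \|\chi_\eta\|_{L^2(Q_1)}$ gives
\[
M \le C\, w(\eta)\, E^{1/2}, \qquad w(\eta) := \begin{cases} \eta^{(2-d)/2} & d \ge 3, \\ |\ln(\eta/2)|^{1/2} & d = 2, \end{cases}
\]
and substituting back $E = \eta^{d-2} M$ produces at once both upper bounds $E^{1/2} \le C \eta^{d-2} w(\eta)$ and $M \le C w(\eta)^2$ claimed in \eqref{c-10}--\eqref{c-11}.

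For the matching lower bounds I would use that $\chi_\eta$ minimizes the convex functional
\[
J(v) = \tfrac{1}{2}\int_{Q_1} |\nabla v|^2\,dx - \eta^{d-2}\int_{Q_1} v\,dx
\]
over the Hilbert space $H$. Since $J(\chi_\eta) = -\tfrac12\eta^{d-2} M$, the inequality $J(\chi_\eta) \le J(\lambda v)$ for an arbitrary admissible $v$, after optimization in $\lambda \in \mathbb{R}$, yields the dual bound
\[
M \;\ge\; \frac{\eta^{d-2}\bigl(\int_{Q_1} v\bigr)^{\!2}}{\int_{Q_1} |\nabla v|^2}.
\]
Thus it is enough to exhibit one $v \in H$ with $\int_{Q_1} v \gtrsim 1$ and Dirichlet energy at most $C\eta^{d-2}$ for $d \ge 3$, respectively $C|\ln(\eta/2)|^{-1}$ for $d=2$.

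The natural candidate is the harmonic capacitary potential of $\eta T$ inside $B(0,1/3)$: let $v$ solve $\Delta v = 0$ in $B(0,1/3) \setminus \eta T$ with $v = 0$ on $\eta T$ and $v = 1$ on $\partial B(0, 1/3)$, then extend by $1$ on $Q_1 \setminus B(0, 1/3)$ and $Y$-periodically. Hypothesis \eqref{condition-0} places $\eta T$ well inside $B(0,1/3)$ for small $\eta$, so $v \in H$, and evidently $\int_{Q_1} v \ge |Q_1 \setminus B(0, 1/3)| \ge c > 0$. For $d \ge 3$ I would bound its Dirichlet energy by testing it against the rescaled exterior potential $\phi_\ast(x/\eta)$ from \eqref{ext-0}, smoothly cut off between two concentric balls of radii comparable to $\eta$; the scaling identity $\int|\nabla(\phi_\ast(\cdot/\eta))|^2\,dx = \eta^{d-2}\int|\nabla \phi_\ast|^2\,dy$ delivers the target bound $C\eta^{d-2}$. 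For $d=2$ the exterior solution grows logarithmically, so I would instead use an explicit radial logarithmic barrier on an annulus enclosing $\eta T$, whose Dirichlet energy is of order $|\ln(\eta/2)|^{-1}$.

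Plugging the two capacity estimates into the dual bound yields $M \ge c$ when $d \ge 3$ and $M \ge c|\ln(\eta/2)|$ when $d=2$; the identity $E = \eta^{d-2} M$ then gives the matching lower bounds on $\|\nabla \chi_\eta\|_{L^2(Q_1)}$. The main obstacle I anticipate is constructing the test function with the correct capacity scaling in dimension two, where the exterior problem lacks a bounded solution at infinity and the rescaling argument that worked for $d \ge 3$ must be replaced by a localized logarithmic barrier.
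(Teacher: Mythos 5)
Your proposal is correct. Note that the paper itself gives no proof of Lemma \ref{c-lemma}; it simply cites \cite{Jing-2020} and \cite[Lemma 4.4]{Shen-2022}, so you are supplying a self-contained argument that the paper outsources. Your derivation is sound: the energy identity $E=\eta^{d-2}M$ follows from testing the weak formulation with $\chi_\eta$ itself; Poincar\'e plus Cauchy--Schwarz plus that identity closes the upper bounds; and the dual characterization $M\ge \eta^{d-2}\bigl(\int v\bigr)^2/\int|\nabla v|^2$ for $v\in H$, applied to an admissible barrier of small Dirichlet energy, yields the matching lower bounds. The energy--capacity estimates for your test function are what one expects: the cut-off $\phi_*(\cdot/\eta)$ has energy $O(\eta^{d-2})$ for $d\ge 3$ (the cut-off correction is $O(\eta^{2(d-2)})$, subordinate), and the radial logarithmic barrier has energy $O(|\ln(\eta/2)|^{-1})$ in dimension two. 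Two minor remarks that would tighten the write-up: (i) the entire scheme implicitly requires $\eta$ small enough that $\eta T\subset B(0,1/3)$ (say $\eta< 2/(3\sqrt d)$), but for $\eta$ bounded away from $0$ the estimates are trivial, so this costs nothing; (ii) for $d\ge 3$ you do not actually need $\phi_*$ — an explicit radial power barrier $|x|^{2-d}$ on an annulus $B(0,1/3)\setminus B(0,C\eta)$ gives the same $O(\eta^{d-2})$ energy with less machinery, paralleling your $d=2$ construction and avoiding any dependence on the regularity theory for the exterior problem.
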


\begin{proof}
See \cite{Jing-2020} or  \cite[Lemma 4.4] {Shen-2022}.
\end{proof}

\begin{lemma}\label{d-S-lemma}
Let $u\in W^{1, 2}(Q_R)$ for some $R\ge 100d$.
Then, for any  $r\in [1, R/100]$,
\begin{equation}\label{dPS}
\left(\fint_{Q_{r}} | u - \hat{u} (0)|^2 \right)^{1/2}
\le C r \sum_{\ell=0}^N R^\ell \left(\fint_{Q_{R/2}} |\nabla \partial^\ell u|^2 \right)^{1/2}
+ C \left(\fint_{Q_{3r}} |\nabla u|^2 \right)^{1/2},
\end{equation}
where $N=[d/2]+1$ and $C$ depends only on $d$.
\end{lemma}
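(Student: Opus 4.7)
The plan is to split the deviation as
\[
u(x) - \hat u(0) = \bigl( u(x) - \hat u(z_x) \bigr) + \bigl( \hat u(z_x) - \hat u(0) \bigr),
\]
where for each $x \in Q_r$ we denote by $z_x \in \mathbb Z^d$ the unique lattice point with $x \in z_x + Q_1$. The first summand measures the local fluctuation of $u$ about its unit-cell averages and will be absorbed into $(\fint_{Q_{3r}} |\nabla u|^2)^{1/2}$; the second is a discrete function on $\mathbb Z^d$ that I would control using Lemma \ref{ds-lemma} together with a telescoping path on $\mathbb Z^d$, producing the factor $r$ and the sum over $\ell$ up to $N$.

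For the local fluctuation, I would apply Poincar\'e's inequality on each unit cube, $\int_{z + Q_1} |u - \hat u(z)|^2 \le C \int_{z + Q_1} |\nabla u|^2$, and sum over the relevant $z \in \mathbb Z^d \cap Q_{r+1}$ (the set of lattice points hit by $z_x$ for $x \in Q_r$). Since the unit cubes essentially tile $\R^d$, this yields $\int_{Q_r} |u - \hat u(z_x)|^2 \le C \int_{Q_{r+2}} |\nabla u|^2$. Dividing by $|Q_r| = r^d$ and using $r + 2 \le 3r$ for $r \ge 1$ then gives $\fint_{Q_r} |u - \hat u(z_x)|^2 \le C \fint_{Q_{3r}} |\nabla u|^2$, which is exactly the second term on the right.

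For the discrete deviation, I would apply Lemma \ref{ds-lemma} to each first difference $\partial_j \hat u$ at the scale $R_1 = R/8$. This is admissible since $R_1 \ge 3d$ and $Q_{2R_1 + 3} = Q_{R/4 + 3} \subset Q_{R/2}$ under the hypothesis $R \ge 100 d$. Using \eqref{FTC-1} to trade $|\partial^{\ell+1}\hat u(z)|^2$ for $\int_{z + 3Q_1} |\nabla \partial^\ell u|^2$ and then summing in $z$, one arrives at
\[
\max_{z \in \mathbb Z^d \cap Q_{R/8}} |\partial \hat u(z)| \le C \sum_{\ell = 0}^N R^\ell \left( \fint_{Q_{R/2}} |\nabla \partial^\ell u|^2 \right)^{1/2}.
\]
Next, for any $z \in \mathbb Z^d \cap Q_{r+1} \subset Q_{R/8}$, I would connect $0$ to $z$ by a coordinate-by-coordinate path of length at most $dr/2 + d$ lying in $Q_{r+1}$, and telescope the corresponding one-step differences, obtaining $|\hat u(z) - \hat u(0)| \le Cr \max_{w \in Q_{R/8}} |\partial \hat u(w)|$, and consequently
\[
\max_{z \in \mathbb Z^d \cap Q_{r+1}} |\hat u(z) - \hat u(0)| \le C r \sum_{\ell = 0}^N R^\ell \left( \fint_{Q_{R/2}} |\nabla \partial^\ell u|^2 \right)^{1/2}.
\]

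Combining the two pieces via the triangle inequality in $L^2(Q_r)$ and taking square roots yields the claimed estimate \eqref{dPS}. No deep new idea is needed; the main technical care is the bookkeeping of nested cubes, so that the single choice $R_1 = R/8$ in Lemma \ref{ds-lemma} simultaneously satisfies $R_1 \ge 3d$, $Q_{2R_1 + 3} \subset Q_{R/2}$, and $Q_{r+1} \subset Q_{R_1}$, all three of which are guaranteed by the hypotheses $R \ge 100 d$ and $r \le R/100$.
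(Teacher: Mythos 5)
Your proposal is correct and follows essentially the same path as the paper's proof. The paper invokes Lemma~\ref{dp-lemma} (applied to $u-\hat u(0)$) to produce the split into a local Poincar\'e piece and a discrete deviation $\max_z |\hat u(z)-\hat u(0)|$, then telescopes this to $Cr\max_z|\partial\hat u(z)|$ and applies the discrete Sobolev inequality \eqref{ds-1} to $g=\partial\hat u$ followed by \eqref{FTC-1}; you re-derive the Lemma~\ref{dp-lemma} step inline (Poincar\'e on each unit cell plus a sum over cells) rather than citing it, and otherwise carry out the same telescoping and discrete Sobolev argument with explicit bookkeeping of the cube scales, which checks out under $R\ge 100d$ and $r\le R/100$.
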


\begin{proof}
In view of Lemma \ref{dp-lemma} it suffices to show that if $z\in \mathbb{Z}^d\cap Q_{3r}$, then
$|\hat{u}(z) -\hat{u}(0)|$ is bounded by the first term in the right-hand side of \eqref{dPS}.
To this end  we use the observation,
$$
\max_{z\in \mathbb{Z}^d \cap  Q_{3r}} |\hat{u}(z)-\hat{u}(0)|
\le C r \max_{z\in \mathbb{Z}^d\cap Q_{3r}} |\partial \hat{u}(z)|.
$$
 By applying   the discrete Sobolev inequality \eqref{ds-1} with $g(z)=\partial \hat{u} (z)$,
 we obtain 
$$
\aligned
\max_{z\in \mathbb{Z}^d\cap Q_{3r}} |\hat{u}(z)-\hat{u}(0)|
 & \le C r  \sum_{\ell=0}^N R^\ell
\left(\frac{1}{R^d}
\sum_{z\in \mathbb{Z}^d \cap {Q}_{R/4}}
|\partial^{\ell +1} \hat{u} (z)  |^2 \right)^{1/2}\\
&
\le C r  \sum_{\ell=0}^N R^\ell
\left(\fint_{Q_{R/2}} |\nabla \partial^\ell u|^2 \right)^{1/2},
\endaligned
$$
where we have used \eqref{FTC-1} for the last inequality.
\end{proof}

We are now in a position to give the proof of Theorem \ref{Lip-thm}.

\begin{proof}[\bf Proof of Theorem \ref{Lip-thm}]
By rescaling we may assume $\e=1$.
Let $u\in W^{1, 2}(Q_R)$ be a solution of \eqref{5.0-0} for some $R\ge 1$.
To prove \eqref{5.0-2}, we may assume that $R\ge \delta^{-2}d$, where $\delta=\delta(d)>0$ is sufficiently small.
We may also assume that $R$ is an odd integer (the general case follows by choosing an odd integer $\ell $ so that
$R/2 \le \ell \le R$).

Let $w=u-\alpha \chi_\eta$, where $\chi_\eta$ is given by \eqref{corrector} and 
$\alpha\in \R$ is chosen so that $\hat{w}(0)=0$.
Since $\hat{u}(0)=\alpha \hat{\chi}_\eta(0)$, by Lemma \ref{c-lemma},  we have
\begin{equation}\label{L-100}
|\alpha  |
\le \left\{
\aligned
& C |\hat{u}(0)| & \quad & \text{ for } d\ge 3,\\
& C |\ln (\eta/2)|^{-1} |\hat{u}(0)| & \quad & \text{ for } d=2,
\endaligned
\right.
\end{equation}
where $C$ depends only on $d$ and $c_0$.
Let $r\in [1, \delta R]$.
By applying Lemma \ref{d-S-lemma} to $w$ we obtain 
\begin{equation}\label{L-99}
\left(\fint_{Q_r} |w|^2 \right)^{1/2}
\le C r \sum_{\ell=0}^N R^\ell \left(\fint_{Q_{100\delta R}} |\nabla \partial^\ell w|^2 \right)^{1/2}
+ C \left(\fint_{Q_{3r}} |\nabla w|^2 \right)^{1/2}.
\end{equation}

Next, we note that $-\Delta w= - \alpha \eta^{d-2}$ in $Q_R \cap \omega_{1, \eta}$ and
$w=0$ on $Q_R \setminus \omega_{1, \eta}$.
It follows by the Caccioppoli inequality in  Lemma \ref{C-lemma} that 
\begin{equation}\label{L-101}
\left(\fint_{Q_\rho} |\nabla w|^2 \right)^{1/2}
\le \frac{C}{\rho} \left(\fint_{Q_{2\rho}} |w|^2 \right)^{1/2}
+ C |\alpha| \eta^{d-2}  \rho
\end{equation}
for $1\le \rho \le R/2$.
Also, observe that if $\ell \ge 1$, then 
$\Delta (\partial^\ell w)=0$ in $Q_{R- 3 \ell} \cap \omega_{1, \eta}$ and $\partial^\ell w=0$ on $Q_{R-3\ell}  \setminus \omega_{1, \eta}$.
Hence, by the proof of \eqref{C-ind}, 
\begin{equation}\label{L-102}
\aligned
\left(\fint_{Q_\rho} |\nabla \partial^\ell w|^2 \right)^{1/2}
 & \le \frac{C}{\rho^\ell } \left(\fint_{Q_{C\rho}} |\nabla  w|^2 \right)^{1/2}\\
  \endaligned
\end{equation}
for $1\le \ell \le d$ and  $1\le \rho\le  \delta R$, where $C$ depends only on $d$.
It follows from \eqref{L-101} and \eqref{L-99} that for $1\le r\le \delta R/2$, 
$$
\aligned
\left(\fint_{Q_r} |\nabla w|^2 \right)^{1/2} 
& \le C \sum_{\ell=0}^N
R^\ell \left(\fint_{Q_{100\delta R}} |\nabla \partial^\ell w|^2\right)^{1/2}
+ \frac{C}{r}  \left(\fint_{Q_{6r}} |\nabla w|^2\right)^{1/2}
+ C |\alpha| \eta^{d-2} r\\
& \le C \left(\fint_{Q_{ R}} |\nabla w|^2 \right)^{1/2}
+ \frac{C}{r} \left(\fint_{Q_{6r}} |\nabla w|^2\right)^{1/2}
+ C |\alpha| \eta^{d-2} R,\\
\endaligned
$$
where $C$ depends only on $d$ and we have used \eqref{L-102} for the last inequality. Since $w=u-\alpha \chi_\eta$,
this yields, 
\begin{equation}\label{L-103}
\aligned
\left(\fint_{Q_r} |\nabla u|^2 \right)^{1/2}
&\le C \left(\fint_{Q_R} |\nabla u|^2 \right)^{1/2}
+\frac{C}{r}
\left(\fint_{Q_{6r}} |\nabla u|^2 \right)^{1/2}\\
&\qquad\qquad
+ C |\alpha | \left(\fint_{Q_1} |\nabla \chi_\eta|^2 \right)^{1/2}
+ C |\alpha | \eta^{d-2}  R,
\endaligned
\end{equation}
for any $1\le r\le R/6$ (the case $\delta R/2 \le r\le R/6$ is trivial).
We point out that the periodicity of $\nabla \chi_\eta$ is also used for \eqref{L-103}.

Suppose  $d\ge 3$. We consider two cases.
If $R\le \eta^{\frac{2-d}{2}}$, 
in view of \eqref{c-10} and \eqref{L-100}, we have 
$$
\aligned
|\alpha | \left(\fint_{Q_1} |\nabla \chi_\eta|^2 \right)^{1/2}
+ |\alpha | \eta^{d-2} R
&\le C \eta^{\frac{d-2}{2}} |\hat{u}(0)|
 \le C \eta^{\frac{d-2}{2}} \left(\fint_{Q_1} |u|^2 \right)^{1/2}\\
& \le C \eta^{\frac{d-2}{2}} \left(\fint_{Q_R} |u|^2 \right)^{1/2}\\
&\le C \left(\fint_{Q_R} |\nabla u|^2\right)^{1/2},
\endaligned
$$
where we have used the large-scale $L^\infty$ estimate \eqref{5.0-1} for the third inequality
and the Poincar\'e inequality \eqref{P} for the last step.
The assumption that $R$ is an odd integer is also used here.
This, together with \eqref{L-103}, gives
\begin{equation}\label{L-104}
\left(\fint_{Q_r} |\nabla u|^2 \right)^{1/2}
\le C \left(\fint_{Q_R} |\nabla u|^2 \right)^{1/2}
+\frac{C}{r} \left(\fint_{Q_{6r}} |\nabla u|^2 \right)^{1/2}
\end{equation}
for any $r\in [1, R/6]$, where $C$ depends only on $d$ and $c_0$. As in the proof of Theorem \ref{L-thm}, the large-scale Lipschitz estimate \eqref{5.0-2}
with $R\le \eta^{\frac{2-d}{2}}$
follows readily  from \eqref{L-104}.

Suppose $d\ge 3$ and $ R > \eta^{\frac{2-d}{2}}$. Let $\eta^{\frac{2-d}{2}}/2 \le r\le R/2$.
 We use the Caccioppoli inequality \eqref{C}  to obtain 
$$
\aligned
\left(\fint_{Q_r} |\nabla u|^2\right)^{1/2}
&\le \frac{C}{r} \left(\fint_{Q_{2r}} |u|^2 \right)^{1/2}
 \le C \eta^{\frac{d-2}{2}} \left(\fint_{Q_{2r}} |u|^2 \right)^{1/2}\\
& \le C \eta^{\frac{d-2}{2}} \left(\fint_{Q_R} |u|^2 \right)^{1/2}
 \le C \left(\fint_{Q_R} |\nabla u|^2 \right)^{1/2},
\endaligned
$$
where we have used the large-scale $L^\infty$ estimate \eqref{5.0-1} for the third inequality 
and the Poincar\'e inequality for the last step. 
As a result, we deduce that if $R> R_\eta =\eta^{\frac{2-d}{2}}$, 
$$
\aligned
\sup_{1\le r \le R}
\left(\fint_{Q_r} |\nabla u|^2 \right)^{1/2}
 & \le \sup_{1\le r \le  R_\eta}
\left(\fint_{Q_r} |\nabla u|^2 \right)^{1/2}
+ \sup_{R_\eta<  r \le R}
\left(\fint_{Q_r} |\nabla u|^2 \right)^{1/2}\\
&\le C  \sup_{R_\eta\le r \le R}
\left(\fint_{Q_r} |\nabla u|^2 \right)^{1/2}\\
&\le C \left(\fint_{Q_R} |\nabla u|^2 \right)^{1/2},
\endaligned
$$
where we have used the large-scale Lipschitz estimate for the case $R=R_\eta$ for the second
inequality.

The proof for the case $d=2$ is similar.
Again, we consider two cases.
 If $R\le  |\ln (\eta/2)|^{1/2}$, in view of \eqref{c-11} and \eqref{L-100},
 we have 
$$
\aligned
|\alpha | \left(\fint_{Q_1} |\nabla \chi_\eta|^2 \right)^{1/2}
+ |\alpha | \eta^{d-2} R
&\le C |\ln (\eta/2)|^{\frac12}  |\hat{u}(0)|
 \le C |\ln (\eta/2)|^{\frac12} \left(\fint_{Q_1} |u|^2 \right)^{1/2}\\
& \le C  |\ln (\eta/2)|^{\frac12} \left(\fint_{Q_R} |u|^2 \right)^{1/2}\\
&\le C \left(\fint_{Q_R} |\nabla u|^2\right)^{1/2},
\endaligned
$$
where we have used the large-scale $L^\infty$ estimate \eqref{5.0-1} for the third inequality and
 the Poincar\'e inequality \eqref{P-2} for the last step.
As in the case $d\ge 3$, this gives \eqref{L-104}, which leads to \eqref{5.0-2}.
If $R> R_\eta= |\ln (\eta/2)|^{1/2}$ and $R_\eta/2\le  r\le R/2$, 
we use the Caccioppoli inequality \eqref{C} and large-scale $L^\infty$ estimate \eqref{5.0-1}  to obtain 
$$
\aligned
\left(\fint_{Q_r} |\nabla u|^2\right)^{1/2}
&\le \frac{C}{r} \left(\fint_{Q_{2r}} |u|^2 \right)^{1/2}
 \le C |\ln (\eta/2)|^{1/2} \left(\fint_{Q_{2r}} |u|^2 \right)^{1/2}\\
& \le  C |\ln (\eta/2)|^{1/2}  \left(\fint_{Q_R} |u|^2 \right)^{1/2}
 \le C \left(\fint_{Q_R} |\nabla u|^2 \right)^{1/2},
\endaligned
$$
where we have used the 
Poincar\'e inequality \eqref{P-2}
 for the last step.
 This, combined  with the estimate for the case $R\le R_\eta$, yields \eqref{5.0-2} and completes the proof of Theorem \ref{Lip-thm}.
\end{proof}



\section{Large-scale $W^{1, p}$ estimates}\label{section-W}

Let $u\in W^{1, 2}(\R^d)$ and $u=0$ on $\R^d\setminus \omega_{\e, \eta}$.
Using Lemma \ref{P-lemma} and a rescaling argument, one may show that
\begin{equation}\label{P-00}
\int_{\omega_{\e, \eta}} |u|^2\, dx \le C \e^2 \eta^{2-d} \int_{\omega_{\e, \eta}} |\nabla u|^2\, dx
\end{equation}
for $d\ge 3$, and
\begin{equation}\label{P-02}
\int_{\omega_{\e, \eta}} |u|^2\, dx \le C \e^2  |\ln (\eta/2)| \int_{\omega_{\e, \eta}} |\nabla u|^2\, dx
\end{equation}
for $d=2$, where $\e, \eta \in (0, 1]$ and $C$ depends only on $d$ and $c_0$.
Let $W_0^{1, p}(\omega_{\e, \eta})$ denote the closure of $C_0^\infty(\omega_{\e, \eta})$ in $W^{1, p}(\omega_{\e, \eta})$.
It follows from \eqref{P-00}-\eqref{P-02}
by the Lax-Milgram Theorem  that for any $F\in L^2(\omega_{\e, \eta})$ and $f\in L^2 (\omega_{\e, \eta}; \R^d)$,  the Dirichlet problem 
\eqref{D-01} has a unique solution in $W_0^{1, 2}(\omega_{\e, \eta})$.
Moreover, the solution satisfies 
\begin{equation}\label{E}
\|\nabla u\|_{L^2(\omega_{\e, \eta})}
\le \| f \|_{L^2(\omega_{\e, \eta})} + C \e \eta^{1-\frac{d}{2}} \| F \|_{L^2(\omega_{\e, \eta})} 
\end{equation}
for $d\ge 3$, and
\begin{equation}\label{E-2}
\|\nabla u\|_{L^2(\omega_{\e, \eta})}
\le \| f \|_{L^2(\omega_{\e, \eta})} + C \e |\ln (\eta/2)|^{\frac12} \| F \|_{L^2(\omega_{\e, \eta})}
\end{equation}
for $d=2$.
The constants $C$ in \eqref{E}-\eqref{E-2} depend only on $d$ and $c_0$.

Let  $u\in W^{1, 2}_0(\omega_{\e, \eta}) $ be a weak solution of \eqref{D-01}. Define
\begin{equation}\label{S}
S_{\e, \eta} (F, f)(x) =\left(\fint_{x+\e Q_2} |\nabla u|^2 \right)^{1/2},
\end{equation}
where we have extended $u$ to $\R^d$ by zero.
It is easy to see that
\begin{equation}\label{S-1}
\| S_{\e, \eta} (F, f)\|_{L^2(\R^d)}= \|\nabla u\|_{L^2(\R^d)}.
\end{equation}
The following theorem gives the $L^p$ boundedness of $S_{\e, \eta} $  for $p\ge 2$.

\begin{thm}\label{S-thm}
Let $2\le p< \infty$ and  $\omega_{\e, \eta}$ be given by \eqref{omega}, where $T$ is the closure of an open subset of $Y$ with Lipschitz boundary.
Then, for any $f\in C_0^\infty (\R^d; \R^d)$ and $F\in C_0^\infty(\R^d)$,
\begin{equation}\label{S-01}
\| S_{\e, \eta} (F, f) \|_{L^p(\R^d)}
\le
\left\{
\aligned
& C  \| f \|_{L^p(\R^d)} +  C\e \eta^{1-\frac{d}{2}} \| F \|_{L^p(\R^d)}   & \quad & \text{ for } d\ge 3,\\
& C  \| f \|_{L^p(\R^d)} +  C\e  |\ln (\eta/2)|^{\frac12} \| F \|_{L^p(\R^d)}   & \quad & \text{ for } d=2,\\
\endaligned
\right.
\end{equation}
where $C$ depends on $d$, $p$ and $c_0$.
\end{thm}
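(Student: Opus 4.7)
The plan is to apply the real-variable argument of \cite{Shen-2005}, which reduces $L^p$ boundedness ($p>2$) of a sublinear operator bounded on $L^2$ to a weak reverse H\"older inequality on cubes. By linearity of the solution map we may treat the $f$- and $F$-pieces of $S_{\e,\eta}$ separately; set $T_1 f := S_{\e,\eta}(0,f)$ and $T_2 F := S_{\e,\eta}(F,0)$. The $L^2$ bounds follow from \eqref{E}--\eqref{E-2} together with the isometry \eqref{S-1}, giving $\|T_1\|_{L^2\to L^2}\le 1$ and $\|T_2\|_{L^2\to L^2}\le C\e\eta^{1-d/2}$ (resp.\ $C\e|\ln(\eta/2)|^{1/2}$ for $d=2$). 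In particular \eqref{S-01} is already established at $p=2$.

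To establish the reverse H\"older inequality, fix a cube $Q\subset\R^d$ with side length $R$ and assume first that $g$ (either $f$ or $F$) vanishes on $2Q$, so that the corresponding solution $u$ satisfies $\Delta u=0$ in $2Q\cap\omega_{\e,\eta}$ and $u=0$ on $2Q\setminus\omega_{\e,\eta}$. When $R\ge C_0\e$, exploit the $\e\mathbb{Z}^d$-periodicity of $\omega_{\e,\eta}$ to translate so that the center of $Q$ lies within $\e Q_1$ of the origin, then invoke Theorem~\ref{Lip-thm}: for every $x\in Q$,
\begin{equation*}
|T_i g(x)|^2 = \fint_{x+\e Q_2}|\nabla u|^2 \le C\fint_{cQ}|\nabla u|^2.
\end{equation*}
A Fubini computation (using $\e\le R$) reverses the averaging to give $\fint_{cQ}|\nabla u|^2\le C\fint_{c'Q}|T_i g|^2$, and the resulting pointwise inequality $\sup_{Q}|T_i g|\le C(\fint_{c'Q}|T_i g|^2)^{1/2}$ yields a reverse H\"older inequality at every exponent. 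The case $R<C_0\e$ is automatic: the windows $x+\e Q_2$ for $x\in Q$ all share an overlap of comparable measure, so $T_i g$ varies by at most a universal factor across $Q$.

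For inputs $g$ that do not vanish on $2Q$, perform the standard decomposition $u=v+w$, where $v\in W_0^{1,2}(\omega_{\e,\eta})$ solves the Dirichlet problem with source supported in $4Q$ and $w=u-v$ solves the homogeneous equation on $4Q\cap\omega_{\e,\eta}$. The previous step applies to $w$, while the $L^2$ energy estimate gives $\|\nabla v\|_{L^2}\le C_0\|g\|_{L^2(4Q)}$ (with the appropriate $\e,\eta$-weight for $g=F$). Combining produces the two-term weak reverse H\"older inequality
\begin{equation*}
\left(\fint_Q|T_i g|^p\right)^{1/p}\le C\left(\fint_{c'Q}|T_i g|^2\right)^{1/2}+C_0\sup_{Q'\supset Q}\left(\fint_{Q'}|g|^2\right)^{1/2},
\end{equation*}
valid for every $p<\infty$, where $C_0$ matches the $L^2$ operator norm of $T_i$. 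A rescaling argument (replacing $T_2$ by $T_2/C_0$) reduces to the case in which both constants are universal, so the real-variable theorem of \cite{Shen-2005} delivers $L^q$ boundedness for every $2<q<\infty$ with operator norm proportional to the $L^2$ norm, yielding \eqref{S-01}.

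The main technical obstacle is the Fubini reversal together with the translation needed to center Theorem~\ref{Lip-thm}, which is stated on cubes at the origin while $\omega_{\e,\eta}$ is only periodic, not translation invariant. Once this is handled, the $\e,\eta$-dependence of the final $L^p$ norm is inherited entirely from the $L^2$ operator norm, producing the sharp constants asserted in \eqref{S-01}.
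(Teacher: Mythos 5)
Your proposal is correct and follows essentially the same route as the paper: split $S_{\e,\eta}(F,f)$ by sublinearity into the $f$-piece $S_{\e,\eta}(0,f)$ and the $F$-piece $S_{\e,\eta}(F,0)$, normalize the latter by its $L^2$ operator norm (so that both become uniformly $L^2$-bounded), verify the weak reverse H\"older inequality using Theorem~\ref{Lip-thm}, and then invoke Theorem~\ref{Shen-thm}. The Fubini ``reversal'' you describe is exactly what the paper encodes in \eqref{S-3}, and the translation issue you flag is genuine but harmless: for $R\gg\e$ a shift by an element of $\e\mathbb{Z}^d$ reduces a cube centered at $x_0$ to one whose center is within $\e Q_1$ of the origin, and sandwiching between $Q_{R-2\e}$ and $Q_{R+2\e}$ absorbs the offset into the constant.

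One point worth noting: the $u=v+w$ decomposition you introduce for inputs $g$ not vanishing on $2Q$ is redundant. Theorem~\ref{Shen-thm} as stated already requires the reverse H\"older hypothesis \eqref{Shen-1} \emph{only} for $g$ with $\mathrm{supp}(g)\subset\R^d\setminus 4B$; the good/bad splitting for general data is carried out inside the proof of the real-variable lemma, not by the user. So one only needs the one-term $L^\infty$ bound \eqref{S-2} (the second term on the right of \eqref{Shen-1} is never activated). Re-deriving the two-term inequality is not wrong, just extra work. Similarly, make sure your dilation factors ($2Q$, $c'Q$, etc.) line up with the specific ones ($2B$, $4B$) in the statement of Theorem~\ref{Shen-thm}, or state explicitly that the theorem holds for any fixed dilation pair; the paper sidesteps this by covering a ball $B$ with cubes of side length $c_d r$ and applying the cube-based $L^\infty$ estimate \eqref{S-2} directly.
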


The case $p=2$ follows readily from \eqref{S-1} and \eqref{E}-\eqref{E-2}.
To prove Theorem \ref{S-thm} for $p>2$, we use a real-variable argument and the large-scale Lipschitz estimate obtained in the last section.

An operator $\mathcal{S}$ is called sublinear if there exists a constant $K$ such that
\begin{equation}\label{K}
| \mathcal{S} (f+g)|
\le K \left\{ |\mathcal{S}(f)| + |\mathcal{S} (g)| \right\}.
\end{equation}

\begin{thm}\label{Shen-thm}
Let $\mathcal{S}$ be a bounded sublinear operator from $L^2(\R^d; \R^m)$ to $L^2(\R^d)$  with
$\| \mathcal{S} \|_{L^2\to L^2} \le C_0$.
Let $q>2$.
Suppose that 
\begin{equation}\label{Shen-1}
\left(\fint_B |\mathcal{S}(g)|^q \right)^{1/q}
\le N \left\{
\left(\fint_{2B} |\mathcal{S}(g)|^2 \right)^{1/2}
+ \sup_{B^\prime \supset B}
\left(\fint_B |g|^2 \right)^{1/2} 
\right\}
\end{equation}
for any ball $B$ in $\R^d$ and for any $g\in C_0^\infty(\R^d; \R^m)$ with supp$(g)\subset \R^d \setminus 4B$.
Then for any $f\in C_0^\infty (\R^d; \R^m)$,
\begin{equation}\label{Shen-2}
\| \mathcal{S} (f) \|_{L^p(\R^d)}
\le C_p \| f\|_{L^p(\R^d)},
\end{equation}
where $2< p< q$ and $C_p$ depends at most on $p$, $q$, $C_0$, $N$ and $K$ in \eqref{K}.
\end{thm}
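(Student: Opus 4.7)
The plan is to prove \eqref{Shen-2} by a real-variable Calder\'on-Zygmund argument, using the higher-integrability hypothesis \eqref{Shen-1} to drive a good-$\lambda$ inequality that upgrades the $L^2$ bound to $L^p$ on the range $2<p<q$. Fix $f\in C_0^\infty(\R^d;\R^m)$ and a large ball $B_0\supset\supp{f}$; by the $L^2$ estimate $\mathcal{S}(f)\in L^2(\R^d)$, and it suffices to bound $\|\mathcal{S}(f)\|_{L^p(B_0)}$ uniformly in $B_0$ and then pass $B_0\uparrow\R^d$. For $\lambda>\lambda_0:=C_0(\fint_{B_0}|\mathcal{S}(f)|^2)^{1/2}$, I would apply a Calder\'on-Zygmund decomposition to the level set
\[
E_\lambda=\{x\in\R^d:\mathcal{M}(|\mathcal{S}(f)|^2)(x)>\lambda^2\},
\]
where $\mathcal{M}$ is the Hardy-Littlewood maximal operator, obtaining a disjoint family of dyadic cubes $\{Q_i\}$ with $E_\lambda=\bigcup_iQ_i$ and $\fint_{Q_i^*}|\mathcal{S}(f)|^2\le C\lambda^2$ on a fixed enlargement $Q_i^*$ of each $Q_i$.

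For each $Q_i$ let $B_i$ be a ball of comparable radius containing $Q_i^*$, and split $f=f_1^i+f_2^i$ with $f_1^i=f\mathbf{1}_{4B_i}$. Sublinearity yields $|\mathcal{S}(f)|\le K|\mathcal{S}(f_1^i)|+K|\mathcal{S}(f_2^i)|$. The first piece is handled by the $L^2$ bound on $\mathcal{S}$:
\[
\left(\fint_{Q_i}|\mathcal{S}(f_1^i)|^2\right)^{1/2}\le C\,\mathcal{M}(|f|^2)^{1/2}(x_i)\qquad\text{for any }x_i\in Q_i.
\]
The second piece falls under hypothesis \eqref{Shen-1} with $g=f_2^i$ and ball $B_i$: the Whitney property combined with the $L^2$ control above gives $\fint_{2B_i}|\mathcal{S}(f_2^i)|^2\le C\lambda^2+C\mathcal{M}(|f|^2)(x_i)$, while $\sup_{B'\supset B_i}(\fint_{B'}|f|^2)^{1/2}\le C\mathcal{M}(|f|^2)^{1/2}(x_i)$ for any $x_i\in Q_i$, so
\[
\left(\fint_{B_i}|\mathcal{S}(f_2^i)|^q\right)^{1/q}\le C\lambda+C\mathcal{M}(|f|^2)^{1/2}(x_i).
\]
Applying $L^2$ and $L^q$ Chebyshev inequalities to the two pieces on $Q_i$ and summing in $i$ then gives the good-$\lambda$ inequality
\[
\bigl|\{\mathcal{M}(|\mathcal{S}(f)|^2)>(A\lambda)^2,\ \mathcal{M}(|f|^2)^{1/2}\le\gamma\lambda\}\bigr|\le C\bigl((\gamma/A)^2+A^{-q}\bigr)|E_\lambda|
\]
for all $A>1$ and $\gamma\in(0,1]$.

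The conclusion follows by integrating the good-$\lambda$ inequality against $p\lambda^{p-1}\,d\lambda$ above $\lambda_0$. Because $p<q$, I can first choose $A$ so large that $CA^{-q}<A^{-p}/4$, then choose $\gamma$ small enough that $C(\gamma/A)^2<A^{-p}/4$; absorbing and using the Hardy-Littlewood bound on $L^{p/2}$ (legal since $p>2$) gives
\[
\|\mathcal{S}(f)\|_{L^p(B_0)}\le C\gamma^{-1}\|\mathcal{M}(|f|^2)^{1/2}\|_{L^p(\R^d)}+C\lambda_0|B_0|^{1/p}\le C\|f\|_{L^p(\R^d)}+C\lambda_0|B_0|^{1/p}.
\]
Sending $B_0\uparrow\R^d$ forces $\lambda_0|B_0|^{1/p}\to0$ (since $\mathcal{S}(f)\in L^2$), yielding \eqref{Shen-2}. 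The main obstacle is the good-$\lambda$ step: it is essential to extract the full $A^{-q}$ decay from \eqref{Shen-1} via Chebyshev, since any weaker exponent would preclude the range $2<p<q$; a secondary subtlety is the bookkeeping of the $\sup_{B'\supset B_i}$ term, which must be dominated by $\mathcal{M}(|f|^2)^{1/2}$ evaluated at a point of $Q_i$, using that every ball containing $B_i$ also contains a point of $Q_i$.
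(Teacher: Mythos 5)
Your proof reconstructs the real-variable good-$\lambda$ argument that the paper cites (without reproducing) from \cite{Shen-2005} and \cite[pp.79--80]{Shen-book}, and the core steps are correct and in the same spirit: the Whitney decomposition of $\{\mathcal{M}(|\mathcal{S}(f)|^2)>\lambda^2\}$, the splitting $f=f_1^i+f_2^i$ on each Whitney cube, weak-$(1,1)$ Chebyshev for the near piece, and $L^{q/2}$ Chebyshev via \eqref{Shen-1} for the far piece, which delivers the $A^{-q}$ decay that is exactly what makes the range $2<p<q$ close. Two points should be tightened. First, your good-$\lambda$ inequality concerns the \emph{global} level sets $E_\lambda$, so integrating it against $p\lambda^{p-1}\,d\lambda$ produces $\|\mathcal{M}(|\mathcal{S}(f)|^2)^{1/2}\|_{L^p(\R^d)}^p$ on both sides of the absorption, and that quantity must be known finite \emph{a priori} before a small-constant term can be absorbed; restricting the left-hand side to $B_0$ does not by itself close this, since the right-hand side still involves the global level sets. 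The standard fix is to run the argument with the truncated function $\min(\mathcal{M}(|\mathcal{S}(f)|^2),n)$, or with a local (dyadic) maximal function relative to a fixed large cube, and then pass to the limit; once this is done the $\lambda_0|B_0|^{1/p}\to 0$ observation becomes unnecessary. Second, $f_2^i=f\mathbf{1}_{\R^d\setminus 4B_i}$ is not $C_0^\infty$, so you should either mollify the cutoff or note that, by density and the $L^2$-boundedness of $\mathcal{S}$, \eqref{Shen-1} extends to $L^2$ functions supported off $4B$; neither fix affects the structure of the argument.
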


\begin{proof}
See \cite{Shen-2005} or  \cite[pp.79-80]{Shen-book}.
\end{proof}

 Observe that 
by linearity,
\begin{equation}\label{L}
S_{\e, \eta}(F, f) \le S_{\e, \eta} (F, 0) + S_{\e, \eta} (0, f).
\end{equation}
We first treat the case $S_{\e, \eta}(0, f)$.

\begin{lemma}\label{S-lemma-1}
Let $2< p< \infty$ and $S_{\e, \eta}$ be defined by \eqref{S}. 
Then
\begin{equation}\label{S-11}
\| S_{\e, \eta} (0, f)\|_{L^p(\R^d)}
\le C \| f \|_{L^p(\R^d)}
\end{equation}
for any $f\in C_0^\infty(\R^d; \R^d)$, where $C$ depends only on $d$, $p$ and $c_0$ in \eqref{condition-0}.
\end{lemma}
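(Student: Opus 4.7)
The plan is to apply the real-variable criterion Theorem \ref{Shen-thm} to the sublinear operator $\mathcal{S}(f) := S_{\e,\eta}(0, f)$. Sublinearity follows at once from the linearity of the solution map $f \mapsto \nabla u$ together with the triangle inequality for $L^2$ averages, and the hypothesis $\|\mathcal{S}\|_{L^2 \to L^2} \le C$ is immediate from identity \eqref{S-1} combined with the energy estimate \eqref{E}-\eqref{E-2} specialized to $F = 0$. What remains is to verify the weak reverse H\"older condition \eqref{Shen-1}; I plan to establish the stronger sup-to-$L^2$-average bound $\sup_{B} \mathcal{S}(g) \le C(\fint_{2B} \mathcal{S}(g)^2)^{1/2}$, which validates \eqref{Shen-1} for every finite $q > 2$ with a uniform constant, so that Theorem \ref{Shen-thm} delivers \eqref{S-11} throughout $2 < p < \infty$.

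Fix a ball $B = B(x_0, r)$ and take $g \in C_0^\infty(\R^d; \R^d)$ with $\supp{g} \subset \R^d \setminus 4B$, and let $u \in W^{1, 2}_0(\omega_{\e,\eta})$ be the Dirichlet solution of $-\Delta u = \text{\rm div}(g)$. Since $g$ vanishes on $4B$, one has $\Delta u = 0$ in $4B \cap \omega_{\e, \eta}$ and $u = 0$ on $4B \setminus \omega_{\e, \eta}$, so the hypotheses of Theorem \ref{Lip-thm} are met, after translation, at any center lying sufficiently deep inside $4B$. The argument then splits into two regimes according to the size of $r$.

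When $r \le C_0 \e$ for a fixed dimensional constant $C_0$, the averaging cubes $x + \e Q_2$ for $x \in B$ all lie inside a common cube of side comparable to $\e$ and each contains a common subcube of comparable size, so $x \mapsto S_{\e, \eta}(0,g)(x)$ varies by at most a bounded multiplicative factor over $B$ and both sides of the target bound are comparable to this common value. When $r \ge C_0 \e$, I would apply Theorem \ref{Lip-thm} at each $x \in B$ with inner scale $2\e$ and outer scale $\rho \sim r$ chosen so that $x + Q_\rho$ lies inside the shrunken region
$$
(2B)^- := \{\, y \in \R^d : y + \e Q_2 \subset 2B \,\}.
$$
Since $|(2B)^-| \sim |Q_\rho|$, this yields $S_{\e, \eta}(0,g)(x)^2 \le C \fint_{(2B)^-} |\nabla u|^2$ uniformly for $x \in B$.

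The proof is finished by pairing this with the matching lower bound
$$
\fint_{2B} S_{\e, \eta}(0,g)^2 \ge c \fint_{(2B)^-} |\nabla u|^2,
$$
which follows from a direct Fubini calculation on $\fint_{2B} \fint_{x + \e Q_2} |\nabla u(y)|^2 \, dy \, dx$: swapping the order of integration reveals that each point $y \in (2B)^-$ carries the full weight $|\e Q_2|$, because for such $y$ the shifted cube $y + \e Q_2$ is contained in $2B$. Combining the two displays gives the required sup-to-average inequality and, via Theorem \ref{Shen-thm}, the desired $L^p$-boundedness. The primary obstacle I anticipate is the careful bookkeeping of geometric constants when nesting the scales $\e$, $\rho$, $r$ and the shrunken region $(2B)^-$ inside the harmonicity set $4B$ while keeping the inner Lipschitz scale no smaller than $\e$; once that is arranged, the rest of the argument is essentially routine.
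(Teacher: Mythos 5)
Your proposal is correct and follows essentially the same route as the paper: apply the real-variable criterion (Theorem \ref{Shen-thm}) to $\mathcal{S} = S_{\e,\eta}(0,\cdot)$, verify the weak reverse H\"older hypothesis via the stronger sup-to-$L^2$-average bound, split into the small-scale case (overlapping averaging regions make $S$ nearly constant) and the large-scale case (Theorem \ref{Lip-thm} plus a Fubini-type lower bound). The paper first rescales to $\e=1$ and proves the pointwise bound for cubes before transferring to balls, but these are cosmetic differences; your argument matches the paper's in substance.
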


\begin{proof}
By rescaling we may assume $\e=1$.
Let $\mathcal{S}(f) =S_{1, \eta} (0, f) $.
Note that  $\mathcal{S}$ satisfies \eqref{K} with $K=1$ and that
$\|\mathcal{S} \|_{L^2 \to L^2} \le 1$.
Let $Q$ be a cube in $\R^d$.
We will show that if $g\in C_0^\infty (\R^d; \R^d)$ with supp$(g)\subset \R^d \setminus 4Q$,
then
\begin{equation}\label{S-2}
\| \mathcal{S} (g)\|_{L^\infty(Q)}
\le C \left(\fint_{2Q} |\mathcal{S} (g)|^2\right)^{1/2},
\end{equation}
where $C$ depends only on $d$ and $c_0$.
By covering a ball $B=B(x_0, r)$ with non-overlapping cubes of side length $c_d r$,
it is not hard to deduce \eqref{Shen-1} from \eqref{S-2} (the second term in the right-hand side of \eqref{Shen-1} is not needed).
As a result, we obtain  \eqref{S-11} for any $f\in C_0^\infty(\R^d; \R^d)$.

Let $Q=Q(x_0, \ell)$ be a cube centered at $x_0$ and with side length $\ell$.
Suppose that $-\Delta u=\text{\rm div}(g)$ in $\omega_{1, \eta}$ and $u=0$ in $\R^d\setminus \omega_{1, \eta}$, where
$g \in C_0^\infty(\R^d; \R^d)$ and supp$(g) \subset \R^d \setminus 4Q$.
To show \eqref{S-2}, we use Theorem \ref{L-thm} as well as the observation, 
\begin{equation}\label{S-3}
 \left(\fint_{2Q} |\mathcal{S} (g)|^2\right)^{1/2}
 =\left(\frac{1}{(2\ell)^d}
 \int_{Q(x_0, 2 + 2 \ell)}
 |\nabla u(y)|^2 | Q(y, 2)\cap Q(x_0, 2\ell)|\, dy \right)^{1/2}.
\end{equation}
We consider two cases.
In the first case we assume $0< \ell\le 2$. Note that 
\begin{equation}\label{S-4}
\mathcal{S}(g)(x) \le 
\left( \int_{Q(x_0, 2+\ell)}
 |\nabla u(y) |^2 \, dy \right)^{1/2}
\end{equation}
for any $x\in Q(x_0, \ell)$.
Since $|Q(y, 2) \cap Q(x_0, 2\ell)|\ge c\, \ell^d $ for $y \in Q(x_0, 2+\ell)$,
we obtain \eqref{S-2} from \eqref{S-3} and \eqref{S-4}, with $C$ depending only on $d$.

In the second case we assume $\ell >2$.
Note that $\Delta u=0$ in $  \omega_{1, \eta}\cap Q(x_0, 4 \ell) $ and
$u=0$ in $\R^d\setminus  \omega_{1, \eta}$.
It follows by Theorem \ref{Lip-thm} that
$$
\left(\fint_{Q(x, 2)} |\nabla u|^2 \right)^{1/2}
\le C \left(\fint_{Q(x,\ell)} |\nabla u|^2 \right)^{1/2}
$$
for any $x\in Q(x_0, \ell)$, where $C$ depends only on $d$ and $c_0$.
Hence, for any $x\in Q(x_0, \ell)$,
$$
\aligned
\mathcal{S} (g) (x)
& \le C \left(\fint_{Q(x,\ell)} |\nabla u|^2 \right)^{1/2}\\
&\le 
C \left(\fint_{Q(x_0, 2\ell)} |\nabla u|^2 \right)^{1/2},
\endaligned
 $$
 where
 we have used the fact $Q(x, \ell)\subset Q(x_0, 2\ell)$ for $x\in Q(x_0, \ell)$.
 This shows that
 $$
 \aligned
 \| \mathcal{S}(g) \|_{L^\infty (Q)}
 &\le C \left(\fint_{Q(x_0, 2\ell)} |\nabla u|^2 \right)^{1/2}\\
 & \le C \left(\fint_{2Q} |\mathcal{S}(g)|^2 \right)^{1/2},
\endaligned
 $$
where, for the last inequality,  we have used \eqref{S-3} and the observation that
$|Q(y, 2) \cap Q(x_0, 2 \ell)| \ge c$ for any $y \in Q(x_0, 2\ell)$.
Consequently, we have proved \eqref{S-2} for any cube $Q$.
\end{proof}

Next, we deal with the operator $S_{\e, \eta}(F, 0)$.

\begin{lemma}\label{S-lemma-2}
Let $2<p<\infty$ and $S_{\e, \eta}$ be defined by \eqref{S}.
Then
\begin{equation}\label{S-21}
\| S_{\e, \eta}(F, 0) \|_{L^p(\omega_{\e, \eta})}
\le \left\{
\aligned
& C \e \eta^{1-\frac{d}{2}} \| F \|_{L^p(\omega_{\e, \eta})} & \quad & \text{ for } d\ge 3,\\
& C \e |\ln (\eta/2)|^{\frac12} \| F \|_{L^p(\omega_{\e, \eta})} & \quad & \text{ for } d=2,
\endaligned
\right.
\end{equation}
for any $F\in C_0^\infty(\R^d)$,
where $C$ depends only on $d$, $p$ and $c_0$.
\end{lemma}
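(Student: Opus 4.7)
The plan is to reproduce the structure of Lemma \ref{S-lemma-1}: rescale to $\e=1$, verify a reverse H\"older inequality on cubes via Theorem \ref{Lip-thm}, and then apply the real-variable extrapolation of Theorem \ref{Shen-thm}. The only new wrinkle relative to the $f$-part is that the natural $L^2$-bound of the operator is no longer $1$ but carries the factor $\eta^{1-d/2}$ (resp.\ $|\ln(\eta/2)|^{1/2}$), so the $\eta$-dependence must be tracked carefully through the extrapolation.

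First, after rescaling to $\e=1$ by $x\mapsto\e x$ (which contributes the announced overall factor of $\e$ on the right-hand side), set $\mathcal{S}(F):=S_{1,\eta}(F,0)$. By \eqref{S-1} combined with the $L^2$ energy estimate \eqref{E} for $d\ge 3$ and \eqref{E-2} for $d=2$, we have
\[
\|\mathcal{S}\|_{L^2\to L^2}\le C\eta^{1-d/2}\quad(d\ge 3),\qquad \|\mathcal{S}\|_{L^2\to L^2}\le C|\ln(\eta/2)|^{1/2}\quad(d=2).
\]
I would then work with the renormalized sublinear operator $\widetilde{\mathcal{S}}=\eta^{d/2-1}\mathcal{S}$ (resp.\ $|\ln(\eta/2)|^{-1/2}\mathcal{S}$), whose $L^2\to L^2$ norm is bounded by a constant $C_0$ depending only on $d$ and $c_0$. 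It is to $\widetilde{\mathcal{S}}$ that I would apply Theorem \ref{Shen-thm}.

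The main work is to verify the reverse H\"older hypothesis \eqref{Shen-1} for $\widetilde{\mathcal{S}}$ with $q=\infty$ (the second, supremum, term will not be needed, because the support condition on $F$ forces interior harmonicity of $u$ on $4Q$). Given a cube $Q=Q(x_0,\ell)$ and $F\in C_0^\infty(\R^d)$ with $\mathrm{supp}(F)\subset\R^d\setminus 4Q$, the weak solution $u$ satisfies $\Delta u=0$ in $\omega_{1,\eta}\cap Q(x_0,4\ell)$ and $u=0$ on $Q(x_0,4\ell)\setminus\omega_{1,\eta}$. For $\ell\le 2$, the pointwise estimate
\[
\widetilde{\mathcal{S}}(F)(x)\,\lesssim\,\Bigl(\int_{Q(x_0,2+\ell)}|\nabla u|^2\Bigr)^{1/2}
\]
combined with the identity \eqref{S-3} and $|Q(y,2)\cap Q(x_0,2\ell)|\gtrsim\ell^d$ on the relevant region gives $\|\widetilde{\mathcal{S}}(F)\|_{L^\infty(Q)}\le C(\fint_{2Q}|\widetilde{\mathcal{S}}(F)|^2)^{1/2}$, as in Lemma \ref{S-lemma-1}. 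For $\ell>2$, I would apply Theorem \ref{Lip-thm} (after translating to a lattice point near $x$ to fit the cubes-at-origin convention) to obtain, for every $x\in Q$,
\[
\Bigl(\fint_{Q(x,2)}|\nabla u|^2\Bigr)^{1/2}\le C\Bigl(\fint_{Q(x_0,2\ell)}|\nabla u|^2\Bigr)^{1/2},
\]
and then convert the right-hand side into $\fint_{2Q}|\widetilde{\mathcal{S}}(F)|^2$ via \eqref{S-3}. This yields the desired $L^\infty$--$L^2$ reverse H\"older bound on every cube.

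Theorem \ref{Shen-thm} then produces $\|\widetilde{\mathcal{S}}(F)\|_{L^p(\R^d)}\le C_p\|F\|_{L^p(\R^d)}$ for all $2<p<\infty$, and multiplying through by the normalization factor recovers \eqref{S-21}; the case $d=2$ is identical with $\eta^{1-d/2}$ replaced throughout by $|\ln(\eta/2)|^{1/2}$. The main obstacle I anticipate is ensuring that the constant in the reverse H\"older inequality is genuinely independent of $\eta$: this is precisely the point of the large-scale Lipschitz estimate in Section \ref{section-Lip}, whose constant depends only on $d$ and $c_0$. Were that constant to degrade in $\eta$, the $L^p$ bound would inherit a worse $\eta$-dependence and the estimates \eqref{m-e-1}--\eqref{m-e-1a} would not be sharp.
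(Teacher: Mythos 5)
Your proposal is correct and follows essentially the same route as the paper: rescale to $\e=1$, renormalize $S_{1,\eta}(F,0)$ by $\eta^{d/2-1}$ (resp.\ $|\ln(\eta/2)|^{-1/2}$) so that its $L^2\to L^2$ norm is $\eta$-independent, verify the reverse H\"older hypothesis of Theorem \ref{Shen-thm} by reusing the argument of Lemma \ref{S-lemma-1} together with the large-scale Lipschitz estimate of Theorem \ref{Lip-thm}, and then extrapolate to $L^p$. The paper simply phrases this by saying ``the same argument as in the proof of Lemma \ref{S-lemma-1} yields the estimate \eqref{S-2},'' so your spelled-out version matches its proof in every essential respect.
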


\begin{proof}
As before, we may assume $\e=1$ by rescaling. Define
$$
\mathcal{S} (F)
=\left\{
\aligned
& \eta^{\frac{d}{2}-1} S_{1, \eta}(F, 0) & \quad & \text{ for } d\ge 3,\\
& |\ln (\eta/2)|^{-\frac12} S_{1, \eta} (F, 0) & \quad & \text{ for } d=2.
\endaligned
\right.
$$
Then $\mathcal{S}$ satisfies \eqref{K} with $K=1$, and  $\| \mathcal{S}\|_{L^2\to L^2} \le C_0$ by \eqref{E}-\eqref{E-2}.
Let $u$ be a weak solution of $-\Delta u=G$ in $\omega_{1, \eta}$ with $u=0$ on $\partial \omega_{1, \eta}$,
where $G\in C_0^\infty(\R^d)$ and supp$(G) \subset \R^d\setminus 4Q$.
Since $\Delta u=0$ in $\omega_{1, \eta}\cap 4Q$ and $u=0$ in  $\R^d\setminus \omega_{1, \eta}$,  the same argument as in the proof of Lemma \ref{S-lemma-1}
yields the estimate \eqref{S-2}.
As a result, by Theorem \ref{Shen-thm}, we obtain 
$$
\| \mathcal{S} (F)\|_{L^p(\omega_{1, \eta})}
\le C \| F \|_{L^p(\omega_{1, \eta})}
$$
for any $2<p<\infty$, 
where $C$ depends only on $d$, $p$ and $c_0$.
This gives \eqref{S-21} with $\e=1$.
\end{proof}

\begin{proof}[\bf Proof of Theorem \ref{S-thm}]
In view of \eqref{L}, the estimates in \eqref{S-01}
follow readily  from \eqref{S-11} and \eqref{S-21}.
\end{proof}



\section{Estimates in an exterior domain}\label{section-loc1}

In this section we establish $W^{1, p}$ estimates for solutions with compact support  of Laplace's equation in the 
exterior domain $\R^d\setminus T$, where $T$ is the closure of a bounded $C^1$ domain in $\R^d$ with connected 
boundary. We  assume that $B(0, c_0) \subset T$.

 We begin with  $W^{1, p}$ estimates for Laplace's equation in a bounded Lipschitz or $C^1$ domain.
 
\begin{thm}\label{JK-lemma}
Let $\Omega$ be a bounded Lipschitz domain in $\R^d$. There exists $\delta>0$, depending on $d$ and the Lipschitz character of $\Omega$, such that
if 
\begin{equation}\label{Lip-R}
\Big |\frac{1}{p}-\frac12 \Big |
< 
\left\{
\aligned
& \frac16 +\delta & \quad & \text{ for } d\ge 3,\\
& \frac14  +\delta & \quad & \text{ for } d=2,
\endaligned
\right.
\end{equation}
 the Dirichlet problem, 
$-\Delta u=F$ in $\Omega$ and $u=0$ on $\partial\Omega$, has a unique solution in $W_0^{1,p}(\Omega)$
for any $F\in W^{-1, p}(\Omega)$.
Moreover, the solution satisfies the estimate,
\begin{equation}\label{JK-e}
\| \nabla u \|_{L^p(\Omega)} \le C_p \| F \|_{W^{-1, p}(\Omega)},
\end{equation}
where $C_p$ depends on $d$, $p$ and the Lipschitz character of $\Omega$.
Furthermore, if $\Omega$ is a bounded $C^1$ domain, the results above hold for $1< p< \infty$.
\end{thm}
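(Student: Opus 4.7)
The plan is to reduce Theorem \ref{JK-lemma} to the real-variable argument already used in Section \ref{section-W} (a bounded-domain version of Theorem \ref{Shen-thm}), combined with the boundary regularity theory of Dahlberg and Jerison--Kenig for harmonic functions in Lipschitz and $C^1$ domains.

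First I would dispose of the case $p=2$ directly: on a bounded Lipschitz domain, Poincar\'e's inequality gives coercivity of the Dirichlet form on $H_0^1(\Omega)$, and the Lax--Milgram theorem produces the unique solution satisfying \eqref{JK-e} with $p=2$. To go from $p=2$ to a range of exponents $p>2$, the strategy is to verify a weak reverse H\"older inequality for $\nabla u$: namely, for some fixed $q > 2$,
\[
\left( \fint_{Q \cap \Omega} |\nabla u|^q \right)^{1/q}
\le C \left( \fint_{2Q \cap \Omega} |\nabla u|^2 \right)^{1/2},
\]
whenever $u$ is a solution of $\Delta u = 0$ in $4Q \cap \Omega$ that vanishes on $4Q \cap \partial\Omega$. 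For interior cubes $2Q \subset \Omega$ this is standard Caccioppoli combined with Meyers. For boundary cubes it is precisely Dahlberg's $L^q$ Dirichlet regularity theorem, which is valid for $q < 3 + \delta$ when $d \ge 3$ and for $q < 4 + \delta$ when $d=2$; these exponents translate exactly into the condition \eqref{Lip-R} after one inverts. In the $C^1$ setting, the boundary reverse H\"older inequality holds for every finite $q$, either from the classical Calder\'on--Zygmund theory applied to the double-layer potential representation or from $C^{1,\alpha}$-type boundary H\"older continuity; this yields the full range $1 < p < \infty$.

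The case $1 < p < 2$ I would obtain by a duality argument. Since $-\Delta$ is self-adjoint and the range of admissible exponents in \eqref{Lip-R} is symmetric about $1/2$, validity of \eqref{JK-e} at the dual exponent $p'$ implies it at $p$: given $f \in L^p(\Omega;\R^d)$, one tests against the solution $v$ of the adjoint problem with $L^{p'}$ datum, integrates by parts, and applies the estimate at $p'$.

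The main obstacle, which is exactly what restricts the Lipschitz range to $|1/p-1/2| < 1/6 + \delta$ (resp.\ $1/4+\delta$), is the boundary reverse H\"older estimate. This restriction is in fact sharp in the sense that counterexamples exist for general Lipschitz domains outside this range (as shown by Jerison--Kenig). Since the required boundary regularity theorems are available in the literature, the work reduces to checking that the hypotheses of the bounded-domain version of Theorem \ref{Shen-thm} are satisfied and assembling the pieces.
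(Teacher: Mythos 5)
Your proposal is correct in its overall structure, and it reconstructs an actual proof strategy where the paper itself offers only a citation to Jerison--Kenig \cite{JK}. The route you sketch --- start from $p=2$ via Lax--Milgram, establish a weak reverse H\"older inequality for $\nabla u$ for local harmonic functions vanishing on a boundary chunk, feed it into the real-variable self-improvement machine (a bounded-domain analogue of Theorem \ref{Shen-thm}), and then dualize to reach $1<p<2$ --- is precisely the modern method one finds in Shen's book and related papers, and it does produce the sharp ranges $|1/p-1/2|<1/6+\delta$ ($d\ge 3$) and $|1/p-1/2|<1/4+\delta$ ($d=2$). So you have added genuine content: the paper tells the reader where to find the result, and you have explained \emph{how} the result is proved.

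A few attributions and details are worth tightening if you want this to stand as more than a sketch. First, for interior cubes the reverse H\"older is not a Meyers-type gain of an $\epsilon$ but an unbounded gain (harmonic functions satisfy interior $L^\infty$ gradient bounds), so Meyers is not the binding constraint there; only the boundary cubes matter, as you correctly say a sentence later. Second, the boundary reverse H\"older with sharp exponent $3+\delta$ (resp.\ $4+\delta$) is not literally Dahlberg's theorem; Dahlberg's $L^2$ harmonic measure theory is an ingredient, but the sharp exponent in the form you need --- and its identification with the range of solvability for the inhomogeneous Dirichlet problem --- is due to Jerison and Kenig \cite{JK}, who combined Dahlberg's harmonic-measure estimates with Hardy-space and Green's-function machinery. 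Third, for $C^1$ domains the reverse H\"older for all finite $q$ rests on the Fabes--Jodeit--Rivi\`ere compactness theory for layer potentials, not merely on boundary H\"older continuity; a $C^1$ boundary is not $C^{1,\alpha}$ and one genuinely needs compactness of the double layer to reach $1<p<\infty$. Finally, the existence part for $p\ne 2$ (approximate $F$, solve in $W_0^{1,2}$, pass to the limit using the a priori bound) is routine but should be stated if this is to be a proof rather than an a priori estimate. None of these gaps are structural; the plan is sound and the exponent bookkeeping is correct.
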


\begin{proof}
The estimate \eqref{JK-e}  for $1< p< \infty$ is  well known if $\Omega$ is a $C^{1, \alpha}$ domain.
For Lipchitz  and $C^1$ domains,  the theorem was proved in  \cite{JK}.
\end{proof}

The next theorem is on the solvability  of the Dirichlet problem in a weighted Sobolev space in the exterior domain $\R^d\setminus T$,
\begin{equation}\label{ext-P}
-\Delta u= F \quad  \text{ in } \R^d\setminus T\quad  \text{ and } \quad u=0 \quad \text{ on } \partial T.
\end{equation}
We first introduce some notations.
For $1<p< \infty$ and $p\neq d$, let
\begin{equation}\label{X-p}
\aligned
X^{1, p} (\R^d\setminus T)
=  \Big\{ u\in W^{1, p}_{loc}(\R^d\setminus T): \ \
& (1+ |x|)^{-1} u \in L^p(\R^d\setminus T) \\
& \text{ and } 
\nabla u  \in L^p(\R^d\setminus T) \Big\},
 \endaligned
\end{equation}
with its natural norm,
\begin{equation}\label{X-norm}
\| u \|_{X^{1, p} (\R^d\setminus T)}
= \| (1+|x|)^{-1}  u \|_{L^p(\R^d\setminus T)}
+ \|\nabla u \|_{L^p(\R^d\setminus T)}.
\end{equation}
If $p=d$, let 
\begin{equation}\label{X-d}
\aligned
X^{1, d} (\R^d\setminus T)
=  \Big\{ u\in W^{1, d}_{loc}(\R^d\setminus T): \ \  & 
((1+|x|) \ln (2+|x|) )^{-1} u \in L^d(\R^d\setminus T)  \\
& \text{ and } 
\nabla u \in L^d(\R^d\setminus T)\Big\},
 \endaligned
\end{equation}
and
\begin{equation}\label{X-d-norm}
\| u \|_{X^{1, d} (\R^d\setminus T)}
= \| (( 1+|x|)\ln (2+|x|)) ^{-1}  u \|_{L^d(\R^d\setminus T)}
+ \|\nabla u \|_{L^d(\R^d\setminus T)}.
\end{equation}
It  follows from \cite[Theorem 1.1]{AGG-1997}  that  for $u \in X^{1, p}(\R^d\setminus T)$,
\begin{equation}\label{P-X}
\aligned
\| u \|_{X^{1, p}(\R^d\setminus T)}
 & \le C \| \nabla u \|_{L^p(\R^d\setminus T)} & \quad & \text{ if } 1<p< d,\\
 \inf_{\alpha \in \R} 
 \| u-\alpha \|_{X^{1, p} (\R^d\setminus T)}
 & \le C \|\nabla u \|_{L^p(\R^d\setminus T)}  & \quad & \text{ if  } d\le p< \infty.
 \endaligned
 \end{equation}

Let 
\begin{equation}\label{X-0}
X_0^{1, p} (\R^d\setminus T)
=\left\{  u \in X^{1, p}(\R^d\setminus T): \ u=0 \text{ on } \partial T \right\},
\end{equation}
and $X^{-1, p} (\R^d\setminus T)$ be the dual of $X^{1, p^\prime}_0(\R^d\setminus T)$, where $p^\prime =\frac{p}{p-1}$.
It is known that $C_0^\infty(\R^d)$ is dense in $X^{1, p}(\R^d\setminus T)$ and 
$C_0^\infty(\R^d\setminus T)$ is dense in $X_0^{1, p}(\R^d\setminus T)$ \cite{AGG-1997}.

Let
\begin{equation}\label{V-p}
V^p_0(\R^d\setminus T)
=\left\{ w \in X^{1, p}_0(\R^d\setminus T): \ \Delta w=0 \text{ in } \R^d\setminus T \right\}.
\end{equation}

\begin{thm}\label{ext-thm}
Let $d\ge 2$ and $2\le p< \infty$.
Let $T$ be the closure of a bounded $C^1$ domain in $\R^d$ with connected boundary.
Then, for any $F\in X^{-1, p}(\R^d\setminus T)$, the Dirichlet problem \eqref{ext-P} 
has a unique solution in $X_0^{1, p}(\R^d\setminus T)/V_0^p(\R^d\setminus T)$.
Moreover, the solution satisfies 
\begin{equation}\label{ext-1}
\inf_{w\in V^p_0(\R^d\setminus T)} 
\| u-w \|_{X^{1, p} (\R^d\setminus T)}
\le C \| F \|_{X^{-1, p} (\R^d\setminus T)},
\end{equation}
where $C$ depends on $d$, $p$ and $T$.
\end{thm}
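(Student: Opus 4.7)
The plan is to establish the base case $p=2$ by Lax--Milgram and then bootstrap to $2 < p < \infty$ via a cutoff decomposition that separates the problem into an interior Dirichlet problem on a bounded $C^1$ domain (handled by Theorem~\ref{JK-lemma}) and a whole-space problem (handled by Calder\'on--Zygmund estimates combined with~\eqref{P-X}). Uniqueness modulo $V_0^p(\R^d\setminus T)$ is by definition.

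For $p=2$, the weighted Hardy-type inequalities~\eqref{P-X} imply that $\|\nabla u\|_{L^2(\R^d\setminus T)}$ is an equivalent norm on the Hilbert space $X_0^{1,2}(\R^d\setminus T)/V_0^2(\R^d\setminus T)$. Applying Lax--Milgram to the bilinear form $a(u,v)=\int_{\R^d\setminus T}\nabla u\cdot\nabla v$ on this quotient yields a unique solution with $\|\nabla u\|_{L^2}\le \|F\|_{X^{-1,2}}$, and hence~\eqref{ext-1} at $p=2$. For $p>2$, approximate $F$ by a sequence $F_n \in C_0^\infty(\R^d \setminus T)$ in $X^{-1,p}$; each $F_n$ lies in $X^{-1,2}$ automatically, so the $p=2$ theory produces a solution $u_n$, and it suffices to bound $u_n$ uniformly in the quotient $X^{1,p}_0/V_0^p$. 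Fix $R_0$ with $T\subset B(0,R_0/2)$, pick $\varphi\in C_c^\infty(B(0,R_0))$ equal to $1$ on a neighborhood of $T$, and split $u_n=\varphi u_n+(1-\varphi)u_n$. The inner piece $\varphi u_n$ lies in $W_0^{1,p}(B(0,R_0)\setminus T)$ and satisfies $-\Delta(\varphi u_n)=\varphi F_n-2\nabla\varphi\cdot\nabla u_n-u_n\Delta\varphi$ in the bounded $C^1$ domain $B(0,R_0)\setminus T$, so Theorem~\ref{JK-lemma} supplies the $W^{1,p}$ bound. The outer piece $(1-\varphi)u_n$, extended by zero into $T$, satisfies an equation on all of $\R^d$; the $L^p$ Calder\'on--Zygmund theory for $-\Delta$ on $\R^d$ together with~\eqref{P-X} (its logarithmic variant when $p=d$) controls the full weighted $X^{1,p}$ norm of this piece.

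The main obstacle is the treatment of the commutator terms $\nabla\varphi\cdot\nabla u_n$ and $u_n\Delta\varphi$ on the annulus $\{R_0/2\le|x|\le R_0\}$, which carry $u_n$ and $\nabla u_n$ at the same order as the quantity being estimated. I would absorb them by a standard compactness--contradiction argument in the quotient $X_0^{1,p}/V_0^p$: if~\eqref{ext-1} failed, there would exist $F_n \to 0$ in $X^{-1,p}$ with $u_n$ satisfying $\|u_n\|_{X^{1,p}_0/V_0^p}=1$, and using Rellich--Kondrachov on the annulus one could extract a locally convergent subsequence whose limit is a nonzero harmonic element of $X_0^{1,p}$, i.e.\ a nonzero element of $V_0^p$, contradicting the normalization in the quotient. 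The hypothesis that $\partial T$ be $C^1$ (rather than merely Lipschitz) is used precisely to invoke Theorem~\ref{JK-lemma} throughout the full range $1<p<\infty$, which is what makes the $p\neq 2$ step go through.
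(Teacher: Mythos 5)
Your plan reconstructs from scratch exactly the argument that the paper invokes by citation: the paper proves Theorem~\ref{ext-thm} by referring to \cite[Theorem 2.10]{AGG-1997}, noting only that the $C^{1,1}$ hypothesis there may be relaxed to $C^1$ because the sole place boundary regularity enters is the bounded-domain $W^{1,p}$ estimate, which Theorem~\ref{JK-lemma} supplies for $C^1$ domains in the full range $1<p<\infty$. Your localization into an interior piece (handled by Theorem~\ref{JK-lemma}) plus an outer piece (handled by whole-space weighted estimates and~\eqref{P-X}), together with your explicit observation that $C^1$ is needed precisely for the interior step, is a faithful sketch of what \cite{AGG-1997} does, so the route is the same one the paper relies on rather than a genuinely different one.

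There is, however, a real gap in your treatment of the commutator absorption. You argue: given $F_n\to 0$ and $\|u_n\|_{X^{1,p}_0/V_0^p}=1$, Rellich on the annulus yields a local limit $u$ which is a nonzero harmonic element of $X_0^{1,p}$, hence a nonzero element of $V_0^p$, ``contradicting the normalization.'' But membership of the local limit $u$ in $V_0^p$ does not by itself contradict $\|u_n\|_{X^{1,p}_0/V_0^p}=1$; the quotient norm is not continuous under local $L^p$ convergence, and it is perfectly consistent for $u_n$ to converge locally to an element of $V_0^p$ while retaining quotient norm one. To close the contradiction one must promote the local convergence to \emph{strong} convergence in $X^{1,p}$, and that is precisely where the a priori estimate must be invoked a second time: $u_n-u$ again solves $-\Delta(u_n-u)=F_n$ with zero boundary trace, so the localization estimate $\|u_n-u\|_{X^{1,p}}\le C\|F_n\|_{X^{-1,p}}+C\|u_n-u\|_{L^p(K)}\to 0$ gives global strong convergence, and only then does $\|u_n\|_{X^{1,p}_0/V_0^p}\le \|u_n-u\|_{X^{1,p}}\to 0$ yield the contradiction with the normalization. (Separately, your appeal to ``Calder\'on--Zygmund on $\R^d$'' for the outer piece should be the whole-space weighted analogue of Theorem~\ref{ext-thm}, since ordinary $L^p$ CZ theory on $\R^d$ does not by itself control the weighted $X^{1,p}$ norm nor account for the kernel when $p\ge d$; this is another input supplied by \cite{AGG-1997}.)
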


\begin{proof}
This was proved in \cite[Theorem 2.10]{AGG-1997}  under the assumption that  $\partial T$ is $C^{1, 1}$.
With the $W^{1, p}$ estimates in Theorem \ref{JK-lemma} for bounded  $C^1$ domains, an inspection of the proof shows  that 
Theorem 2.10 in \cite{AGG-1997} continues to hold under the condition that $\partial T$ is $C^1$.
\end{proof}

A few remarks are in order.

\begin{remark}\label{ext-r-1}
{\rm
If  $d\ge 3$ and $2\le  p< d$, or $d=p=2$, then
\begin{equation}\label{V-p-1}
V_0^p(\R^d\setminus T) =\{ 0\}.
\end{equation}
As a result, the solution of \eqref{ext-P} is unique in $X_0^{1, p}(\R^d\setminus T)$ and satisfies 
\begin{equation}\label{ext-2}
\| u \|_{X^{1, p}(\R^d\setminus T)} \le C \| F \|_{X^{-1, p} (\R^d\setminus T)}.
\end{equation}
}
\end{remark}

\begin{remark}\label{ext-r-2}
{\rm
Suppose  $d\ge 3$ and $p\ge d$, Then 
\begin{equation}\label{V-p-2}
V_0^p (\R^d\setminus T)
= \left\{ \alpha \phi_*: \ \alpha \in \R\right\},
\end{equation}
where $\phi_*$ is  the unique solution of the exterior problem,
\begin{equation}\label{e-p-1}
\left\{
\aligned
\Delta \phi_* & =0 & \quad & \text{ in } \R^d\setminus {T},\\
\phi_*&=0& \quad & \text{ on } \partial T,\\
\phi_* (x)& \to 1 & \quad & \text{ as } |x| \to \infty.
\endaligned
\right.
\end{equation}
Moreover,   the solution is given by
$$
\phi_* (x) =1-\int_{\partial T} \frac{g_* (y)}{|x-y|^{d-2}} \, d\sigma (y)
$$
for some $g_* \in L^2(\partial T)$ \cite{Verchota-1982, AGG-1997}. It follows that if $0\in T$,
\begin{equation}\label{e-p-2}
\left\{
\aligned
  \phi_* (x) & =1- c_* |x|^{2-d} + O(|x|^{1-d}), \\
  \nabla \phi_* (x)  & =-c_* \nabla (|x|^{2-d}) + O(|x|^{-d}),\\
  \nabla^2 \phi_* (x)  & = O(|x|^{-d}),
\endaligned
\right.
\end{equation}
as $|x| \to \infty$,
where
$$
c_* =\int_{\partial T} g_* (y)\, d\sigma (y) \neq 0.
$$
}
\end{remark}

\begin{remark}\label{ext-r-3}
{\rm
If $d=2$ and $p>2$, then 
\begin{equation}\label{V-p-3}
V_0^p (\R^d\setminus T)
= \left\{ \alpha \phi_*: \ \alpha \in \R\right\},
\end{equation}
where $\phi_*$ is  a harmonic function in $\R^2\setminus {T}$ with the properties that  $\phi_*=0$ on $\partial T$ and
\begin{equation}\label{e-p-3}
\left\{
\aligned
\phi_* (x)  & =-c_* \ln |x| + O(|x|^{-1}),\\
\nabla \phi_* (x) & = -c_* \nabla (\ln |x|) + O(|x|^{-2}) ,\\
\nabla^2 \phi_* (x)  & = O(|x|^{-2}),
\endaligned
\right.
\end{equation}
as $|x| \to \infty$ \cite{Verchota-1982, AGG-1997}.
}
\end{remark}

\begin{thm}\label{thm-ext1}
Let $d\ge 2$ and $2< p< \infty$.
Let $u\in W^{1, p}(\R^d\setminus T)$ be a solution of
\begin{equation}\label{ext-DP}
-\Delta u = F +\text{\rm div}(f) \quad \text{  in } \R^d\setminus T
\quad \text{ and } \quad u=0 \quad \text{  on }\partial T.
\end{equation}
Suppose that $T\subset B(0, R)$ and
supp$(u)$, supp$(F)$, supp$(f)$ $\subset B(0, R)$ for some $R\ge 2$.
Then
\begin{equation}\label{ext-10}
\| \nabla u \|_{L^p(\R^d\setminus T)}
\le C \Phi_p(R)
\left\{ \| f\|_{L^p(\R^d\setminus T)}
+ R \| F \|_{L^p(\R^d\setminus T)} \right\},
\end{equation}
where 
\begin{equation}\label{Phi}
\Phi _p(R)=
\left\{
\aligned
& 1 & \quad & \text{ if } d\ge 3 \text{ and } 2<p< d,\\
& (\ln R)^{1-\frac{1}{d}} & \quad & \text{ if } d\ge 3 \text{ and } p=d,\\
& R^{1-\frac{d}{p} } & \quad  & \text{ if } d\ge 3 \text{ and } d< p< \infty,\\
& R^{1-\frac{2}{p}} (\ln R)^{-1} & \quad & \text{ if } d=2 \text{ and } 2< p< \infty,
\endaligned
\right.
\end{equation}
and $C$ depends only on $d$, $p$ and $T$.
\end{thm}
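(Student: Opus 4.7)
The plan is to invoke the weighted-Sobolev solvability result Theorem~\ref{ext-thm} and, in the cases where $V_0^p(\R^d\setminus T)$ is non-trivial, to absorb the null-space contribution by exploiting the compact support of $u$.

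First I would bound the data norm $\|F+\text{div}(f)\|_{X^{-1,p}(\R^d\setminus T)}$ by duality. For any test function $v\in X^{1,p'}_0(\R^d\setminus T)$, the pointwise inequality $|v|\le(1+R)(1+|x|)^{-1}|v|$ on $\text{supp}(F)\subset B(0,R)$ combined with H\"older gives
\begin{equation*}
\Bigl|\int F v-\int f\cdot\nabla v\Bigr|\le C\bigl(\|f\|_{L^p}+R\|F\|_{L^p}\bigr)\,\|v\|_{X^{1,p'}(\R^d\setminus T)},
\end{equation*}
and since $p>2$ forces $p'<2\le d$, the $X^{1,p'}$-norm uses only the non-logarithmic weight, so no extra factor intervenes. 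Because $u$ has compact support it already lies in $X_0^{1,p}(\R^d\setminus T)$, and by Theorem~\ref{ext-thm} (together with finite-dimensionality of $V_0^p$) I can pick $w_0\in V_0^p(\R^d\setminus T)$ achieving the quotient infimum, so
\begin{equation*}
\|u-w_0\|_{X^{1,p}(\R^d\setminus T)}\le C\bigl(\|f\|_{L^p}+R\|F\|_{L^p}\bigr).
\end{equation*}
The triangle inequality $\|\nabla u\|_{L^p}\le \|\nabla(u-w_0)\|_{L^p}+\|\nabla w_0\|_{L^p}$ then reduces the theorem to bounding $\|\nabla w_0\|_{L^p}$.

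When $V_0^p=\{0\}$, which occurs for $d\ge 3$ and $2<p<d$ by Remark~\ref{ext-r-1}, one has $w_0=0$ and \eqref{ext-10} holds with $\Phi_p(R)=1$. In the remaining cases Remarks~\ref{ext-r-2}--\ref{ext-r-3} give $V_0^p=\R\phi_*$, so $w_0=\alpha\phi_*$ for a single $\alpha\in\R$. Since $u\equiv 0$ on $\{|x|>R\}$, on that exterior region $u-w_0=-\alpha\phi_*$, and the weighted $L^p$ part of the bound on $\|u-w_0\|_{X^{1,p}}$ forces
\begin{equation*}
|\alpha|^p\int_{|x|>R}\rho(x)^{-p}|\phi_*(x)|^p\,dx\le C\bigl(\|f\|_{L^p}+R\|F\|_{L^p}\bigr)^p,
\end{equation*}
where $\rho(x)=1+|x|$ if $p\ne d$ and $\rho(x)=(1+|x|)\ln(2+|x|)$ if $p=d$. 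Inserting the asymptotics \eqref{e-p-2}--\eqref{e-p-3} (so $|\phi_*|\to 1$ when $d\ge 3$ and $|\phi_*|\sim c_*|\ln|x||$ when $d=2$) and evaluating the integral in polar coordinates produces a lower bound of the form $c\,\Phi_p(R)^{-p}$ on the left, which rearranges to $|\alpha|\le C\Phi_p(R)(\|f\|_{L^p}+R\|F\|_{L^p})$ in each case.

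Finally, $\|\nabla\phi_*\|_{L^p(\R^d\setminus T)}$ is a finite constant depending only on $T$: the decay $|\nabla\phi_*|=O(|x|^{1-d})$ when $d\ge 3$ (resp.\ $O(|x|^{-1})$ when $d=2$) is $L^p$-integrable precisely in the ranges $p\ge d$ (resp.\ $p>2$) under consideration, and $\nabla\phi_*$ is bounded near $\partial T$ by $C^1$ regularity. Hence $\|\nabla w_0\|_{L^p}=|\alpha|\|\nabla\phi_*\|_{L^p}$ is controlled by the right-hand side of \eqref{ext-10}, which closes the proof. The main technical obstacle I expect is the bookkeeping: matching each weight (logarithmic vs.\ polynomial) against the asymptotic profile of $\phi_*$ (bounded vs.\ logarithmic growth) so that the four formulas for $\Phi_p$ emerge with the sharp powers of $\ln R$, particularly at the borderline exponent $p=d$ and in dimension two.
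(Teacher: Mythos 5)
Your proposal is correct and follows essentially the same route as the paper: bound the $X^{-1,p}$-norm of the data using the support constraint, invoke Theorem~\ref{ext-thm}, and in the cases with non-trivial null space $V_0^p=\R\phi_*$ use the vanishing of $u$ outside $B(0,R)$ together with the asymptotics of $\phi_*$ from Remarks~\ref{ext-r-2}--\ref{ext-r-3} to extract $|\alpha|\le C\Phi_p(R)(\|f\|_{L^p}+R\|F\|_{L^p})$, finishing via the triangle inequality and finiteness of $\|\nabla\phi_*\|_{L^p(\R^d\setminus T)}$. The bookkeeping you flag as the main technical obstacle works out exactly as expected, matching the weight $((1+|x|)\ln(2+|x|))^{-1}$ at $p=d$ against the bounded profile of $\phi_*$ and the weight $(1+|x|)^{-1}$ against the logarithmic growth in dimension two, producing the four formulas in \eqref{Phi}.
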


\begin{proof}
Note that $W^{1, p} (\R^d\setminus T)\subset X^{1, p} (\R^d\setminus T)$, and that  for any $\psi \in X^{1, p^\prime}_0(\R^d\setminus T)$,
$$
\aligned
\Big |\int_{\R^d\setminus T} F \psi\, dx \Big|
 & \le \| F \|_{L^p(B(0, R))} \| \psi \|_{L^{p^\prime} (B(0, R))}\\
& \le  CR   \| F \|_{L^p(B(0, R))} \| \psi \|_{X^{1, p^\prime}(\R^d\setminus T)},
\endaligned$$
where we have used the facts that supp$(F)\subset B(0, R)$ and $p^\prime \neq d$.
It follows that 
\begin{equation}
\aligned
\| F +\text{\rm div}(f)\|_{X^{-1, p}(\R^d\setminus T)}
\le  C \left\{ \| f\|_{L^p(\R^d\setminus T)}
+ R \| F \|_{L^p(\R^d\setminus T)} \right\}.
\endaligned
\end{equation}
This allows us to apply Theorem \ref{ext-thm} to obtain 
\begin{equation}\label{ext-11}
\inf_{w\in V_0^p(\R^d\setminus T)} \| u - w \|_{X^{1, p}(\R^d\setminus T)} 
\le C \left\{ \| f\|_{L^p(\R^d\setminus T)}
+ R \| F \|_{L^p(\R^d\setminus T)} \right\}.
\end{equation}

Suppose $d\ge 3$ and $2<p<d$.
Then $V_0^p(\R^d\setminus T)= \{ 0\}$. 
It follows from \eqref{ext-11}  that 
\begin{equation}\label{ext-12}
\| \nabla u \|_{L^p(\R^d\setminus T)}
\le C 
\left\{ \| f\|_{L^p(\R^d\setminus T)}
+ R \| F \|_{L^p(\R^d\setminus T)} \right\}.
\end{equation}
Let $d\ge 3$ and  $d< p< \infty$. Then, by Remark \ref{ext-r-2}, 
$V_0^p(\R^d\setminus T) =\{ \alpha \phi_*: \alpha \in \R\}$,
where the harmonic function  $\phi_*$ satisfies \eqref{e-p-2}.
Let
\begin{equation}\label{ext-13}
\inf_{w\in V_0^p(\R^d\setminus T)} \| u -w \|_{X^{1, p} (\R^d\setminus T)} 
=\| u -\alpha_0 \phi_*\|_{X^{1, p} (\R^d\setminus T)}
\end{equation}
for some $\alpha_0\in \R$.
Since $u=0$ in $\R^d\setminus B(0, R)$, it follows by \eqref{ext-11} that
$$
|\alpha_0 | \| |x|^{-1} \phi_* \|_{L^p(\R^d\setminus B(0, R))}
\le C 
\left\{ \| f\|_{L^p(\R^d\setminus T)}
+ R \| F \|_{L^p(\R^d\setminus T)} \right\}.
$$
Since $\phi_*\sim 1$ for $|x|$ large, this yields 
$$
|\alpha_0|\le C R^{1-\frac{d}{p}} \left\{ \| f\|_{L^p(\R^d\setminus T)}
+ R \| F \|_{L^p(\R^d\setminus T)} \right\}.
$$
Hence,
\begin{equation}\label{ext-14}
\aligned
\| \nabla u\|_{L^p(B(0, R)\setminus T)}
 & \le \| \nabla (u-\alpha_0 \phi_*) \|_{L^p(\R^d\setminus T)}
+ |\alpha_0| \| \nabla \phi_* \|_{L^p(\R^d\setminus T)}\\
& \le C \left\{ \| f\|_{L^p(\R^d\setminus T)}
+ R \| F \|_{L^p(\R^d\setminus T)} \right\} + C |\alpha_0| \\
& \le  C R^{1-\frac{d}{p}}\left\{ \| f\|_{L^p(\R^d\setminus T)}
+ R \| F \|_{L^p(\R^d\setminus T)} \right\}.
\endaligned
\end{equation}
If $d\ge 3$ and $p=d$, a similar argument shows that
$$
|\alpha_0| \le C ( \ln  R)^{1-\frac{1}{d}} \left\{ \| f\|_{L^p(\R^d\setminus T)}
+ R \| F \|_{L^p(\R^d\setminus T)} \right\},
$$
and
\begin{equation}\label{ext-15}
\| \nabla u\|_{L^p(B(0, R)\setminus T)}
\le 
 C ( \ln  R)^{1-\frac{1}{d}} \left\{ \| f\|_{L^p(\R^d\setminus T)}
+ R \| F \|_{L^p(\R^d\setminus T)} \right\}.
\end{equation}

The argument above works equally well for $d=2$ and $2<p< \infty$.
In this case, using \eqref{e-p-3}, we obtain 
$$
|\alpha_0|\le C R^{1-\frac{2}{p}}( \ln R)^{-1}  \left\{ \| f\|_{L^p(\R^d\setminus T)}
+ R \| F \|_{L^p(\R^d\setminus T)} \right\},
$$
and
\begin{equation}\label{ext-16}
\| \nabla u\|_{L^p(B(0, R)\setminus T)}
\le C R^{1-\frac{2}{p}}( \ln R)^{-1}   \left\{ \| f\|_{L^p(\R^d\setminus T)}
+ R \| F \|_{L^p(\R^d\setminus T)} \right\}.
\end{equation}
This completes the proof.
\end{proof}


\begin{cor}\label{cor-l-00}
Let $d\ge 2$ and $2<p< \infty$.
Let $u$ be a solution of $-\Delta u=F+\text{\rm div}(f)$ in $R \tY \setminus T$ with
$u=0$ on $\partial  T$, where $\tY= (1+c_0)Q_1$.
Then,  for $R\ge 3$, 
\begin{equation}\label{l-100}
\| \nabla u\|_{L^p(Q_R \setminus T)}
\le C \Phi_p (R)  \left\{
\| f \|_{L^p(R\tY\setminus T)}
+ R\| F \|_{L^p(R\tY\setminus T)} + R^{\frac{d}{p} -\frac{d}{2} -1} \| u \|_{L^2(R\tY \setminus B(0, R/3))} \right\},
\end{equation}
where  $\Phi_p(R)$ is given by \eqref{Phi} and $C$ depends only on $d$, $p$ and $T$.
\end{cor}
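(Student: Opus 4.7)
The plan is a cutoff argument reducing the estimate in $Q_R\setminus T$ to Theorem \ref{thm-ext1} applied to a compactly supported function, followed by an interior regularity step on the annular region $R\tY\setminus B(0,R/3)$ that converts the resulting $L^p$ error into the prescribed $L^2$ norm of $u$.

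Concretely, choose $\varphi\in C_c^\infty(\R^d)$ with $\varphi\equiv 1$ on $Q_R$, $\mathrm{supp}(\varphi)\subset (1+c_0/2)R Q_1\Subset R\tY$, and $|\nabla\varphi|\le C/R$, $|\Delta\varphi|\le C/R^2$. Put $v=u\varphi$; then $v$ vanishes on $\partial T$, is supported in a ball of radius $\lesssim R$, and satisfies $-\Delta v=\tilde F+\mathrm{div}(\tilde f)$ with
$$\tilde F=F\varphi-f\cdot\nabla\varphi+u\Delta\varphi,\qquad \tilde f=f\varphi-2u\nabla\varphi.$$
Applying Theorem \ref{thm-ext1} to $v$ gives
$$\|\nabla u\|_{L^p(Q_R\setminus T)}\le \|\nabla v\|_{L^p(\R^d\setminus T)}\le C\Phi_p(R)\bigl\{\|\tilde f\|_{L^p}+R\|\tilde F\|_{L^p}\bigr\}.$$
Because the supports of $\nabla\varphi$ and $\Delta\varphi$ lie in $A:=(1+c_0/2)RQ_1\setminus Q_R\subset R\tY\setminus Q_R$,
$$\|\tilde f\|_{L^p}+R\|\tilde F\|_{L^p}\le C\bigl\{\|f\|_{L^p(R\tY\setminus T)}+R\|F\|_{L^p(R\tY\setminus T)}+R^{-1}\|u\|_{L^p(A)}\bigr\}.$$

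It remains to bound $R^{-1}\|u\|_{L^p(A)}$ by the $L^2$ norm of $u$ on $A^*:=R\tY\setminus B(0,R/3)$. Since $T\subset B(0,R/3)$ for $R\ge 3$ and $A\subset A^*$, on $A^*$ the function $u$ solves $-\Delta u=F+\mathrm{div}(f)$ with no boundary constraint inherited from $\partial T$. The rescaling $y=x/R$ converts this to a PDE on the fixed annulus $\tY\setminus B(0,1/3)$; standard interior $W^{1,p}$ regularity, combined with a bounded number of Sobolev embedding iterations to pass from $L^2$ up to $L^p$, yields after scaling back
$$\|u\|_{L^p(A)}\le C\bigl\{R^{d/p-d/2}\|u\|_{L^2(A^*)}+R\|f\|_{L^p(A^*)}+R^2\|F\|_{L^p(A^*)}\bigr\}.$$
Dividing by $R$ and inserting into the previous display produces \eqref{l-100}.

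The main obstacle is this last step: producing the precise factor $R^{d/p-d/2}$ in front of $\|u\|_{L^2(A^*)}$, which is the scaling the mean-value property would give for harmonic functions and must be preserved through an iterated Sobolev/Caccioppoli chain. Bookkeeping these scaling factors—and ensuring that the interior theory applies without any boundary constraint at $\partial(R\tY)$, which is guaranteed by having shrunk the support of $\varphi$ strictly inside $R\tY$—is the technical heart of the proof, while the cutoff reduction to Theorem \ref{thm-ext1} itself is essentially a routine computation.
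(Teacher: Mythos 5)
Your proposal matches the paper's proof: both cut off $u$ by a function supported just inside $R\tY$, invoke Theorem \ref{thm-ext1}, and then use interior regularity with scaling to trade the $L^p$ norm of $u$ on the annular commutator region for an $L^2$ norm on $R\tY\setminus B(0,R/3)$ with the factor $R^{d/p-d/2}$. (One small inaccuracy: the assertion ``$T\subset B(0,R/3)$ for $R\ge 3$'' fails for larger $d$ since $T$ can reach distance $\sim\sqrt{d}/2$ from the origin, but this is harmless -- what the interior estimates actually need is that the cutoff annulus $A$ is separated from $T$, which holds since $T\subset Q_1\subset Q_R$.)
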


\begin{proof}
Choose a cut-off function $\varphi \in C_0^\infty((1+c_0/3) Q_R )$ such that $\varphi=1$ in $Q_R$ and
$|\nabla \varphi|\le C R^{-1}$, $|\nabla^2 \varphi | \le C R^{-2}$.
Note that $u \varphi=0$ on $\partial T$ and
$$
-\Delta (u\varphi)
=F \varphi + \text{\rm div}( f \varphi )
- f\cdot \nabla \varphi
-2 \text{\rm div} ( u \nabla \varphi)
+ u\Delta \varphi
$$
in $\R^d\setminus T$.
It follows by Theorem \ref{thm-ext1} that 
\begin{equation}\label{l-101}
\aligned
\|\nabla u \|_{L^p(Q_R\setminus T)}
&\le \| \nabla (u\varphi)\|_{L^p(\R^d\setminus T)}\\
& \le C \Phi_p(R)
 \left\{  \| f \|_{L^p(R\tY\setminus T)}
+ R\| F \|_{L^p(R\tY\setminus T)} + R^{-1} \| u \|_{L^p((1+c_0/3)Q_R \setminus Q_R)} \right\},
\endaligned
\end{equation}
where $\Phi_p(R)$ is given by \eqref{Phi}.
Using interior estimates for Laplace's equation, one may show that
$$
\| u\|_{L^p((1+c_0/3)Q_R \setminus Q_R)}
\le CR^{\frac{d}{p}-\frac{d}{2}} \| u\|_{L^2(R\tY\setminus B(0, R/3) )}
+ CR \| f\|_{L^p(\tY\setminus T)}
+ C R^2 \| F \|_{L^p(\tY \setminus T)},
$$
which, together with \eqref{l-101}, yields \eqref{l-100}.
\end{proof}



\section{Local estimates in a cell}\label{section-loc2}

In this section we establish $W^{1, p}$  estimates for solutions of
\begin{equation}\label{l-0}
\left\{
\aligned
-\Delta u & = F +\text{\rm div}(f) & \quad & \text{ in } \tY \setminus \eta T,\\
u&=0 & \quad & \text{ on } \partial (\eta T),
\endaligned
\right.
\end{equation}
where $\tY = (1+c_0)Q_1$ and $\eta \in (0, (4d)^{-1})$.
Throughout the section, unless indicated otherwise, we assume  that $T$  is the closure of a bounded $C^1$ subdomain of $Y$
and satisfies \eqref{condition-0}.
Let $\Phi_p (R)$ be given by \eqref{Phi}.
Our goal is to prove the following.

\begin{thm}\label{thm-local}
Let $2< p< \infty$. Suppose that
 $u$ is a solution of \eqref{l-0} with $F\in L^p(\tY\setminus \eta T)$ and $f\in L^p(\tY\setminus \eta T; \R^d)$.
 Let  $\alpha \in \R$.
Then, for $d\ge 3$,
\begin{equation}\label{l-1}
\aligned
\| \nabla u \|_{L^p(Y\setminus \eta T)}
 & \le C  |\alpha| \eta^{\frac{d}{p}-1}
+C \Phi_p (\eta^{-1})
 \left(  \int_{\tY \setminus \eta T} \left(  |f|^p+ |F|^p \right)\, dx\right)^{1/p}\\
&\qquad
+C \Phi_p (\eta^{-1})  \left(\int_{\tY \setminus B(0, 1/3)} |u-\alpha |^2 \, dx \right)^{1/2},
\endaligned
\end{equation}
and for $d=2$,
\begin{equation}\label{l-1-2d}
\aligned
\| \nabla u \|_{L^p(Y\setminus \eta T)}
 & \le C  |\alpha| \eta^{\frac{2}{p}-1} |\ln \eta|^{-1}
+C\eta^{\frac{2}{p}-1} |\ln \eta|^{-1}
 \left(  \int_{\tY \setminus \eta T} \left(  |f|^p+ |F|^p \right)\, dx\right)^{1/p}\\
&\qquad
+C\eta^{\frac{2}{p}-1} |\ln \eta|^{-1} \left(\int_{\tY \setminus B(0, 1/3)} |u-\alpha |^2 \, dx \right)^{1/2},
\endaligned
\end{equation}
where  $C$ depends only on $d$, $p$ and $T$.
\end{thm}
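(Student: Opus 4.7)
My plan is to reduce Theorem \ref{thm-local} to Corollary \ref{cor-l-00} by rescaling. First I would set $y = x/\eta$ and $v(y) = u(\eta y)$ on $\eta^{-1}\tY\setminus T$; then $v$ vanishes on $\partial T$ and satisfies $-\Delta_y v = \tilde F + \text{div}_y \tilde f$ with $\tilde F(y) = \eta^2 F(\eta y)$ and $\tilde f(y) = \eta f(\eta y)$. A direct change of variables records the scalings $\|\tilde f\|_{L^p}=\eta^{1-d/p}\|f\|_{L^p(\tY\setminus\eta T)}$, $\|\tilde F\|_{L^p}=\eta^{2-d/p}\|F\|_{L^p}$, $\|\nabla v\|_{L^p(Q_{\eta^{-1}}\setminus T)}=\eta^{1-d/p}\|\nabla u\|_{L^p(Y\setminus \eta T)}$, and $\|v-\alpha\|_{L^2(\eta^{-1}\tY\setminus B(0,(3\eta)^{-1}))}=\eta^{-d/2}\|u-\alpha\|_{L^2(\tY\setminus B(0,1/3))}$.

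To absorb the parameter $\alpha$, I would subtract an appropriate multiple of the exterior capacitary potential. For $d\ge 3$ set $w = v - \alpha\phi_*$ with $\phi_*$ from Remark \ref{ext-r-2}; for $d=2$ set $w = v - \alpha\phi_*/(c_*\ln\eta)$ with $\phi_*$ from Remark \ref{ext-r-3}. The normalization ensures that in each case $w$ still vanishes on $\partial T$, satisfies $-\Delta w = \tilde F + \text{div}\tilde f$ (since $\phi_*$ is harmonic outside $T$), and approximates $v-\alpha$ on the outer annulus $|y|\sim \eta^{-1}$. Applying Corollary \ref{cor-l-00} to $w$ with $R=\eta^{-1}$ (which satisfies $R\ge 4d \ge 3$) yields
\begin{equation*}
\|\nabla w\|_{L^p(Q_{\eta^{-1}}\setminus T)}\le C\Phi_p(\eta^{-1})\bigl\{\|\tilde f\|_{L^p} + \eta^{-1}\|\tilde F\|_{L^p} + \eta^{d/2 - d/p + 1}\|w\|_{L^2(\eta^{-1}\tY\setminus B(0,(3\eta)^{-1}))}\bigr\}.
\end{equation*}

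Next I would split $\|w\|_{L^2}\le \|v-\alpha\|_{L^2} + |\alpha|\,\varepsilon_\eta$ by the triangle inequality, where $\varepsilon_\eta$ is the $L^2$ norm over the annulus of $1-\phi_*$ (respectively $1-\phi_*/(c_*\ln\eta)$); the asymptotics \eqref{e-p-2}-\eqref{e-p-3} show that $\varepsilon_\eta$ is small enough that the resulting $|\alpha|\,\varepsilon_\eta$ error can be absorbed into the leading $|\alpha|\eta^{d/p-1}$ (or $|\alpha|\eta^{2/p-1}|\ln\eta|^{-1}$) term. To recover $\|\nabla v\|_{L^p}$ from $\|\nabla w\|_{L^p}$, add back $|\alpha|\|\nabla\phi_*\|_{L^p(Q_{\eta^{-1}})}$ or its normalized version for $d=2$; since $|\nabla\phi_*|\lesssim |y|^{1-d}$ for $d\ge 3$ and $|y|^{-1}$ for $d=2$, the tail integrals converge for $p>2$, giving $\|\nabla\phi_*\|_{L^p}\le C$ (respectively $\le C|\ln\eta|^{-1}$). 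Multiplying through by $\eta^{d/p-1}$ unscales back to $x$-variables and delivers exactly the right-hand sides of \eqref{l-1} and \eqref{l-1-2d}. The main obstacle I anticipate is the careful bookkeeping of $\varepsilon_\eta$ in the $d=2$ case: the logarithmic growth of $\phi_*$ forces the $|\ln\eta|^{-1}$ normalization, and the deviation $|\phi_*/(c_*\ln\eta)-1|$ in the annulus must be controlled sharply enough that its contribution produces precisely the factor $\eta^{2/p-1}|\ln\eta|^{-1}$ rather than a logarithmically worse bound.
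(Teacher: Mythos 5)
Your proposal is correct and ultimately rests on the same ingredients as the paper's argument — namely Corollary~\ref{cor-l-00}/Lemma~\ref{lemma-l-1} applied to $u$ minus a corrector built from the exterior solution $\phi_*$ — but the realization of the corrector is genuinely different. The paper constructs the patched function $\psi_\eta$ of \eqref{psi}--\eqref{2d-p}, which is \emph{exactly} $1$ outside $B(0,1/3)$ and \emph{not} harmonic there; this makes $u-\alpha\psi_\eta$ coincide with $u-\alpha$ on the outer annulus so that the $L^2$ term in Lemma~\ref{lemma-l-1} is verbatim $\|u-\alpha\|_{L^2(\tY\setminus B(0,1/3))}$, but it introduces a modified source $\alpha F_\eta+\operatorname{div}(\alpha f_\eta)$ that must be estimated via Lemma~\ref{p-lemma} ($|F_\eta|+|f_\eta|\le C\eta^{d-2}$ or $C|\ln\eta|^{-1}$). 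You instead keep the corrector \emph{harmonic} — the raw rescaled $\phi_*$, or $\phi_*/(c_*\ln\eta)$ when $d=2$ — so there is no new source term, and the price is paid in the $L^2$ term: $w=v-\alpha\phi_*$ differs from $v-\alpha$ on the annulus $|y|\sim\eta^{-1}$ by $\alpha(1-\phi_*)$, of size $\sim\eta^{d-2}$ pointwise by \eqref{e-p-2}, and the resulting $|\alpha|\varepsilon_\eta$ is absorbed exactly as the paper absorbs its source contribution. The two estimates $|\alpha|\Phi_p(\eta^{-1})\eta^{d-2}$ (paper) and $\eta^{d/p-1}\Phi_p(\eta^{-1})\eta^{d/2-d/p+1}|\alpha|\varepsilon_\eta$ (yours, after unscaling) are the same quantity seen from two angles. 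Your route is somewhat more self-contained for this theorem, since it avoids Lemma~\ref{p-lemma} entirely and uses \eqref{e-p-2}--\eqref{e-p-3} directly; the paper's $\psi_\eta$ has the advantage of being $Y$-periodic and reusable (its $L^p$ gradient asymptotics in Lemma~\ref{p0-lemma} are exactly what supplies the $|\alpha|\eta^{d/p-1}$ term, mirroring your $|\alpha|\,\|\nabla\phi_*\|_{L^p}$ step). One minor point worth making explicit in a write-up: for $d=2$ the normalizing constant $c_*$ in \eqref{e-p-3} must be nonzero — this follows because a bounded harmonic function in $\R^2\setminus T$ vanishing on $\partial T$ is identically zero — otherwise the division $\phi_*/(c_*\ln\eta)$ is ill-defined.
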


\begin{lemma}\label{lemma-l-1}
Let $2<p<\infty$.
Let $u$ be the same as in Theorem \ref{thm-local}. Then
\begin{equation}\label{local-ext}
\| \nabla u \|_{L^p(Y\setminus \eta T)}
\le C \Phi_p (\eta^{-1})\left\{ 
\| u\|_{L^2(\tY \setminus B(0, 1/3))}+
 \| f\|_{L^p(\tY\setminus \eta T)}
+ \| F \|_{L^p(\tY\setminus \eta T)} \right\},
\end{equation}
where  $\Phi_p$ is give by \eqref{Phi} and $C$ depends only on $d$, $p$ and $T$.
\end{lemma}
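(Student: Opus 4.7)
The plan is to reduce the lemma to Corollary \ref{cor-l-00} by rescaling. Since $u$ vanishes on $\partial(\eta T)$ and $\eta T$ is the scaled hole, the natural change of variables is $x = \eta y$, which maps $\tY \setminus \eta T$ onto $\eta^{-1}\tY \setminus T$ and puts the obstacle $T$ in a fixed configuration. Set $v(y) = u(\eta y)$; then $v = 0$ on $\partial T$ and
\[
-\Delta_y v = \tilde{F} + \operatorname{div}_y(\tilde{f}) \quad \text{in } \eta^{-1}\tY \setminus T,
\]
where $\tilde{F}(y) = \eta^2 F(\eta y)$ and $\tilde{f}(y) = \eta f(\eta y)$.

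Next I would apply Corollary \ref{cor-l-00} to $v$ with $R = \eta^{-1}$, which gives
\[
\|\nabla v\|_{L^p(Q_R \setminus T)} \le C\Phi_p(R)\bigl\{ \|\tilde{f}\|_{L^p(R\tY\setminus T)} + R\|\tilde{F}\|_{L^p(R\tY\setminus T)} + R^{\frac{d}{p}-\frac{d}{2}-1}\|v\|_{L^2(R\tY\setminus B(0,R/3))}\bigr\}.
\]
The step I expect to require the most care is converting each norm back through the change of variables (a standard computation): writing $\eta R = 1$, one finds
\[
\|\tilde{f}\|_{L^p(R\tY\setminus T)} = \eta^{1-\frac{d}{p}}\|f\|_{L^p(\tY\setminus \eta T)}, \qquad \|\tilde{F}\|_{L^p(R\tY\setminus T)} = \eta^{2-\frac{d}{p}}\|F\|_{L^p(\tY\setminus \eta T)},
\]
and, since $B(0,R/3)$ in $y$-coordinates corresponds to $B(0,1/3)$ in $x$-coordinates,
\[
\|v\|_{L^2(R\tY\setminus B(0,R/3))} = \eta^{-\frac{d}{2}}\|u\|_{L^2(\tY\setminus B(0,1/3))}.
\]
Finally, $\|\nabla u\|_{L^p(Y\setminus \eta T)} = \eta^{\frac{d}{p}-1}\|\nabla v\|_{L^p(\eta^{-1} Y \setminus T)}$.

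Substituting these identities, the factor $\eta^{\frac{d}{p}-1}$ from the outer rescaling exactly cancels the $\eta^{1-\frac{d}{p}}$ appearing in each of the three rescaled norms (for the last one, combining $R^{\frac{d}{p}-\frac{d}{2}-1} = \eta^{1+\frac{d}{2}-\frac{d}{p}}$ with $\eta^{-d/2}$ yields $\eta^{1-\frac{d}{p}}$). This collapses everything to
\[
\|\nabla u\|_{L^p(Y\setminus \eta T)} \le C\Phi_p(\eta^{-1})\bigl\{\|f\|_{L^p(\tY\setminus\eta T)} + \eta\|F\|_{L^p(\tY\setminus\eta T)} + \|u\|_{L^2(\tY\setminus B(0,1/3))}\bigr\},
\]
and since $\eta \le 1$ the factor of $\eta$ in front of $\|F\|_{L^p}$ can be dropped, completing the proof.
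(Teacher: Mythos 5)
Your proof is correct and mirrors the paper's own argument verbatim: set $v(y)=u(\eta y)$, apply Corollary \ref{cor-l-00} with $R=\eta^{-1}$, and rescale each norm back. One small bookkeeping slip: the Corollary already carries a factor $R$ in front of $\|\tilde F\|_{L^p}$, so $R\|\tilde F\|_{L^p(R\tY\setminus T)}=\eta^{-1}\cdot\eta^{2-d/p}\|F\|_{L^p}=\eta^{1-d/p}\|F\|_{L^p}$ cancels \emph{exactly} against the outer $\eta^{d/p-1}$, as your own prose states; the residual $\eta$ in front of $\|F\|$ in your last display is a typo, and the final ``drop $\eta\le 1$'' step is unnecessary (though harmless).
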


\begin{proof}
This follows readily from Corollary  \ref{cor-l-00} by a simple rescaling argument.
Indeed, suppose $-\Delta u=F +\text{\rm div}(f)$ in $\tY\setminus \eta T$.
Let $v(x)=u(\eta x)$. Then
$
-\Delta v =G +\text{\rm div}(g)$ in $R \tY\setminus T$, where $R=\eta^{-1}$,
$G(x) =\eta^2 F(\eta x)$ and $g (x) =\eta f(\eta x)$.
\end{proof}

Note that if $u$ is a solution of \eqref{l-0} and $\alpha \neq 0$, then $u-\alpha$ is not a solution of \eqref{l-0} since it 
does not satisfy the boundary condition on $\partial (\eta T)$.
To prove Theorem \ref{thm-local}, 
we need to construct  a corrector $\psi_\eta$ such that $\psi_\eta=0$ on $\partial T$ and
$\psi_\eta = 1$ on $(1+c_0)Y \setminus B(0, 1/3)$.

Let $d\ge 3$. 
Let $\phi_*$ be defined by \eqref{e-p-1}.
For each $\eta \in (0, 1/(4d))$, we introduce a function $\psi_\eta$ in $Y$, defined by
\begin{equation}\label{psi}
\psi_\eta (x)
=\left\{
\aligned
& 1 & \quad & \text{ if } x\in Y \setminus B(0, 1/3),\\
& \phi_* (x/\eta) &\quad  & \text{ if } x\in B(0, 1/4) \setminus \eta T ,\\
& 0 & \quad &  \text{ if } x\in \eta T,
\endaligned
\right.
\end{equation}
and $\psi_\eta$ is the harmonic function in $B(0, 1/3) \setminus \overline{B(0, 1/4)}$ such that $\psi_\eta =1$ on $\partial B(0, 1/3)$ and
$\psi_\eta(x) =\phi_* (x/\eta)$ on $\partial B(0, 1/4)$.
In the case $d=2$, we define $\psi_\eta$ by
\begin{equation}\label{2d-p}
\psi_\eta (x)=
\left\{
\aligned
& 1 &\quad & \text{ if } x\in Y\setminus B(0, 1/3), \\
&\frac{\ln |x| -\ln (d\eta)}{ \ln (1/3)-\ln (d\eta)}  & \quad & \text{ if } x \in B(0, 1/3) \setminus B(0, d\eta),\\
& 0 & \quad & \text{ if } x\in B(0, d \eta).
\endaligned
\right.
\end{equation}
 Since $\psi_\eta =1$  on $\partial Y $, we may extend $\psi_\eta$ to $\R^d$ periodically.
 Thus, $\psi_\eta$ is $Y$-periodic, i.e., $\psi_\eta (x+k) =\psi_\eta (x)$ for any $x\in \R^d$ and  $k \in \mathbb{Z}^d$.
Note that $0 \le \psi_\eta\le 1$ for $d=2$.
By the maximum principle,  the  same is true  for $d\ge 3$.

\begin{lemma}\label{p0-lemma}
Let $\psi_\eta$ be defined by \eqref{psi}-\eqref{2d-p}.
If $d\ge 3$,
\begin{equation}\label{p0-0}
\left(\int_{Y} |\nabla \psi_\eta|^p \, dx \right)^{1/p}
\approx 
\left\{
\aligned
&\eta^{\frac{d}{p}-1}   & \quad & \text{ if } d^\prime<p<\infty, \\
&  \eta^{d-2} |\ln \eta|^{\frac{1}{p} } & \quad & \text{ if } p=d^\prime,\\
& \eta^{d-2}  & \quad & \text{ if } 1< p< d^\prime,
\endaligned
\right.
\end{equation}
where $d^\prime =\frac{d}{d-1}$.
If $d=2$, we have $\| \nabla \psi_\eta\|_{L^p(Y)} \approx \eta^{\frac{2}{p}-1} |\ln \eta|^{-1}$ for $2< p< \infty$,
$\|\nabla \psi_\eta\|_{L^p(Y)} \approx |\ln \eta|^{-1/2}$ for $p=2$, 
and $\| \nabla \psi_\eta \|_{L^p(Y)} \approx |\ln \eta|^{-1}$ for $1< p< 2$.
\end{lemma}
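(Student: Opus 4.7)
The plan is to split $Y$ into the regions appearing in the piecewise definition of $\psi_\eta$. Since $\psi_\eta \equiv 1$ on $Y\setminus B(0, 1/3)$ and $\psi_\eta\equiv 0$ on $\eta T$ (or on $B(0, 2\eta)$ when $d=2$), only the rescaled exterior region near the hole and the intermediate annulus contribute, and I will estimate each separately.

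For $d\ge 3$, the main contribution comes from $B(0, 1/4)\setminus \eta T$, where $\psi_\eta(x)=\phi_*(x/\eta)$. Changing variables $y=x/\eta$ gives
\[
\int_{B(0,1/4)\setminus \eta T}|\nabla\psi_\eta|^p\,dx
 = \eta^{d-p}\int_{B(0,1/(4\eta))\setminus T}|\nabla\phi_*(y)|^p\,dy.
\]
From Remark \ref{ext-r-2} I have the matching pointwise bounds $c\,|y|^{1-d}\le |\nabla\phi_*(y)|\le C\,(1+|y|)^{1-d}$ for $|y|$ larger than some fixed $R_0$; the lower bound relies on the identity $\nabla\phi_*(y)=-c_*\nabla(|y|^{2-d})+O(|y|^{-d})$ together with $c_*\neq 0$. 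Passing to polar coordinates reduces the integral to $\int_{R_0}^{1/(4\eta)} r^{(d-1)(1-p)+0}\,dr$, which in the three regimes $p>d'$, $p=d'$, $p<d'$ is of order $1$, $|\ln\eta|$, and $\eta^{(d-1)(p-1)-1}$ respectively. Multiplying by $\eta^{d-p}$ and taking $p$-th roots yields exactly the three asymptotics $\eta^{d/p-1}$, $\eta^{d-2}|\ln\eta|^{1/p}$, and $\eta^{d-2}$.

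It remains to verify that the contribution from the annulus $B(0,1/3)\setminus B(0,1/4)$ does not spoil these asymptotics. There $\psi_\eta-1$ is harmonic with boundary data $0$ on $\partial B(0,1/3)$ and $\phi_*(x/\eta)-1=O(\eta^{d-2})$ on $\partial B(0,1/4)$, and one checks from the asymptotic expansion in Remark \ref{ext-r-2} that the boundary derivatives are likewise $O(\eta^{d-2})$. Schauder estimates then give $\|\nabla\psi_\eta\|_{L^\infty(B(0,1/3)\setminus B(0,1/4))}\le C\eta^{d-2}$. Since $\eta^{d-2}\le C\eta^{d/p-1}$ exactly when $p\ge d'$, this term is absorbed into the main contribution in every regime.

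For $d=2$, formula \eqref{2d-p} gives explicitly $\nabla\psi_\eta(x)=[\ln(1/3)-\ln(2\eta)]^{-1}\,x/|x|^2$ on $B(0,1/3)\setminus B(0,2\eta)$ and zero elsewhere, so
\[
\|\nabla\psi_\eta\|_{L^p(Y)}^p \asymp |\ln\eta|^{-p}\int_{2\eta}^{1/3} r^{1-p}\,dr,
\]
and a direct evaluation of this radial integral in the three cases $p>2$, $p=2$, $1<p<2$ gives the stated estimates. The only subtle point in the whole argument is the matching lower bound for $d\ge 3$, which rests entirely on $c_*\neq 0$ from Remark \ref{ext-r-2}; everything else is routine radial integration.
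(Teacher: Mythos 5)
Your proof is correct and takes the same route as the paper: rescale to the exterior domain, use the $|\nabla\phi_*(y)|\approx|y|^{1-d}$ asymptotics from Remark~\ref{ext-r-2} (with the lower bound resting on $c_*\neq 0$), integrate radially to produce the three regimes, and bound the annulus contribution by $O(\eta^{d-2})$ via harmonic regularity, observing that it is dominated by the main term in every regime. One small point you suppress but the paper flags explicitly: in reducing the rescaled integral to $\int_{R_0}^{1/(4\eta)}$ you implicitly rely on the fact that $|\nabla\phi_*|\in L^p$ in a neighborhood of $\partial T$ for all $1<p<\infty$, and this is precisely where the $C^1$ regularity of $\partial T$ (via Theorem~\ref{JK-lemma}) enters the argument.
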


\begin{proof} 
The case $d=2$ follows by a direct calculation.
Consider the case $d\ge 3$.
Since $\psi_\eta (x)=\phi_* (x/\eta)$ in $B(0, 1/4)\setminus \eta T$, we have
\begin{equation}\label{p0-1}
\aligned
\int_{B(0, 1/4)\setminus \eta T} |\nabla \psi_\eta|^p\, dx
 & =\eta^{d-p} \int_{B(0, (4\eta)^{-1})\setminus T} |\nabla \phi_*|^p\, dx\\
 &\approx 
 \left\{
 \aligned
 & \eta^{d-p} & \quad & \text{ if } p>d^\prime ,\\
 & \eta^{d-p} |\ln \eta| & \quad & \text{ if } p=d^\prime,\\
 & \eta^{(d-2)p} & \quad & \text{ if } 1<p< d^\prime,
 \endaligned
 \right.
 \endaligned
 \end{equation}
where we have used \eqref{e-p-2}. We also used the fact that 
$|\nabla \phi_*| \in L^p(2T\setminus T)$ for any $1<p<\infty$, under the assumption that $\partial T$ is $C^1$.

To bound $\nabla \psi_\eta$ on $B(0, 1/3)\setminus B(0, 1/4)$, we  observe  that
$w=\psi_\eta-1$ is harmonic in $B(0, 1/3)\setminus B(0, 1/4)$ and  $w=0$ on $\partial B(0, 1/3)$,
$w=\phi_*(x/\eta)-1$ on $\partial B(0, 1/4)$.
By \eqref{e-p-2} and regularity estimates for harmonic functions,
we obtain $|\nabla \psi_\eta| =|\nabla w| \le C  \eta^{d-2}$ in $B(0, 1/3)\setminus B(0, 1/4)$.
This, together with \eqref{p0-1}, gives \eqref{p0-0}.
\end{proof}

\begin{lemma}\label{p-lemma}
Let $\psi_\eta$ be defined by \eqref{psi}-\eqref{2d-p} and extended periodically to $\R^d$.
Then
\begin{equation}\label{p-1}
\left\{
\aligned
-\Delta \psi_\eta  & = F_\eta +\text{\rm div} (f_\eta) & \quad & \text{ in } \omega_{1, \eta}, \\
\psi_\eta & =0 & \quad & \text{ in } \R^d\setminus  \omega_{1, \eta},
\endaligned
\right.
\end{equation}
where $F_\eta$ and $f_\eta$ are $Y$-periodic functions satisfying 
\begin{equation} \label{p-2}
\aligned
| F_\eta| + |f_\eta|  & \le C \eta^{d-2}
 \endaligned
\qquad
 \text{ in } Y\setminus \eta T
\end{equation}
for $d\ge 3$,
and
\begin{equation} \label{p-3}
|F_\eta| + |f_\eta| \le C |\ln \eta|^{-1}
\qquad \text{ in }Y \setminus \eta T
\end{equation}
for $d=2$. The constant $C$ depends only on $d$ and $T$.
\end{lemma}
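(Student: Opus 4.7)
The main observation is that $\psi_\eta$ is harmonic in the classical sense on each of its three defining sub-regions. For $d\ge 3$ these are $B(0,1/4)\setminus \eta T$ (on which $\psi_\eta(x)=\phi_*(x/\eta)$ is harmonic because $\phi_*$ is harmonic on $\R^d\setminus T$), the annulus $B(0,1/3)\setminus\overline{B(0,1/4)}$ (harmonic by construction), and $Y\setminus B(0,1/3)$ (on which $\psi_\eta\equiv 1$). For $d=2$ the corresponding regions are $B(0,d\eta)$, the shell $B(0,1/3)\setminus B(0,d\eta)$ on which the radial logarithmic formula is harmonic, and $Y\setminus B(0,1/3)$. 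Consequently the classical Laplacian of $\psi_\eta$ vanishes a.e.\ in $\omega_{1,\eta}$, and the entire distributional contribution is carried by jumps of $\nabla\psi_\eta$ across the two interfaces in each cell. The periodic extension is harmless since $\psi_\eta\equiv 1$ in a neighborhood of $\partial Y$.

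Next I would quantify the jumps of $\nabla\psi_\eta$. For $d\ge 3$ at $\partial B(0,1/4)$: on the inner side $\nabla\psi_\eta(x)=\eta^{-1}\nabla\phi_*(x/\eta)$, so the decay $|\nabla\phi_*(y)|=O(|y|^{1-d})$ from Remark \ref{ext-r-2} gives $|\nabla\psi_\eta|=O(\eta^{d-2})$. On the annulus side, $w=\psi_\eta-1$ is harmonic with boundary data of size $O(\eta^{d-2})$ on $\partial B(0,1/4)$ and zero on $\partial B(0,1/3)$; the maximum principle yields $\|w\|_\infty\le C\eta^{d-2}$ and Schauder estimates up to the smooth boundaries then give $|\nabla\psi_\eta|=O(\eta^{d-2})$ throughout the closed annulus. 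An analogous argument yields an $O(\eta^{d-2})$ jump at $\partial B(0,1/3)$. For $d=2$, direct differentiation of the explicit $h(r)$ gives jumps of size $3/|\ln(3d\eta)|$ at the outer boundary and $1/(d\eta|\ln(3d\eta)|)$ at the inner one.

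With the jumps in hand, each surface delta $J\delta_\Sigma$ is rewritten via the identity $J\delta_\Sigma=\mathbf{1}_\Omega\,\text{div}(W)-\text{div}(\mathbf{1}_\Omega W)$, valid for any smooth vector field $W$ with $W\cdot\nu=J$ on $\Sigma=\partial\Omega$. One chooses $W$ supported in a tubular neighborhood of $\Sigma$ that avoids both the origin and $\partial(\eta T)$, with $|W|+|\text{div}\,W|\le C\|J\|_\infty$. For $d\ge 3$ both jumps are $O(\eta^{d-2})$, so summing contributions over the two interfaces and periodizing yields $|F_\eta|+|f_\eta|\le C\eta^{d-2}$ in $Y\setminus\eta T$ directly. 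The outer jump in the case $d=2$ is treated in the same way, with constants of size $|\ln\eta|^{-1}$.

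The main obstacle is the $d=2$ inner jump at $\partial B(0,d\eta)$, whose pointwise magnitude $O((\eta|\ln\eta|)^{-1})$ exceeds the target bound $|\ln\eta|^{-1}$, so the local cutoff construction above cannot produce it from a bounded vector field directly. The crucial observation is that the total mass of this surface delta is only $O(|\ln\eta|^{-1})$, so the resolution is to redistribute that mass as a bounded density of size $O(|\ln\eta|^{-1})$ over a region of order one (a bounded contribution to $F_\eta$) and absorb the zero-mean difference between the surface delta and this density into the divergence of a bounded vector field by the standard solvability of $\text{div}(g)=\mu$ for compactly supported measures of zero mean and small total variation. Combining this with the local constructions above and extending periodically produces the $F_\eta, f_\eta$ claimed by the lemma.
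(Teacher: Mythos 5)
For $d\ge 3$ your approach is essentially the paper's argument in different clothing: after observing that the distributional Laplacian of $\psi_\eta$ lives on the interfaces $\partial B(0,1/4)$ and $\partial B(0,1/3)$, with jump data of size $O(\eta^{d-2})$, you repackage each surface measure as $F + \text{\rm div}(f)$ with $|F|+|f|$ controlled by that size. Where you write $J\delta_\Sigma = \mathbf{1}_\Omega\,\text{\rm div}(W) - \text{\rm div}(\mathbf{1}_\Omega W)$, the paper extends the Neumann datum $g = \partial_n\psi_\eta$ on $\partial B(0,1/4)$ to a $C^1$ function $G$ on the ball and uses $\int_{\partial B_r} g\varphi\,d\sigma = r^{-1}\int_{B_r}\text{\rm div}(G\varphi x)\,dx$; this is the same identity with $W = r^{-1}Gx$. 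Both give $|F_\eta|+|f_\eta|\le C\eta^{d-2}$ on $Y\setminus\eta T$.

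For $d=2$, however, your final paragraph does not close the gap you correctly identified. ``Redistribution plus solvability of $\text{\rm div}(g)=\mu$ for zero-mean measures of small total variation'' does not produce a bounded $g$ here: since the residual measure still charges $\partial B(0,d\eta)$, any $g\in L^\infty$ with $\text{\rm div}(g)= J\delta_{\partial B(0,d\eta)} + (\text{bounded density})$ must carry a jump of size $J$ in its normal trace across that circle, forcing $\|g\|_{L^\infty(B(0,d\eta)\setminus\eta T)}\gtrsim J\sim(\eta|\ln\eta|)^{-1}$; and $B(0,d\eta)\setminus\eta T\subset Y\setminus\eta T$, so this violates the target bound. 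In fact \eqref{p-3} fails outright for $\psi_\eta$ as defined in \eqref{2d-p}: take $T=B(0,1/4)$ and $\varphi(x)=\zeta(|x|/\eta)$ with $\zeta$ piecewise linear, $\zeta\equiv 0$ on $[0,1/4]\cup[4,\infty)$ and $\zeta(2)=1$. Then $\int_Y\nabla\psi_\eta\cdot\nabla\varphi = -2\pi/\ln\bigl(1/(6\eta)\bigr)$, whereas any $F_\eta, f_\eta$ bounded by $C|\ln\eta|^{-1}$ on $Y\setminus\eta T$ give $\bigl|\int F_\eta\varphi - \int f_\eta\cdot\nabla\varphi\bigr|\le C\eta|\ln\eta|^{-1}$, a contradiction for $\eta$ small. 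You should be aware that the paper's own $d=2$ argument passes from $\int_{B(0,1/3)\setminus B(0,d\eta)}\nabla\psi_\eta\cdot\nabla\varphi$ directly to the single boundary term on $\partial B(0,1/3)$, silently dropping the flux through $\partial B(0,d\eta)$ even though $\varphi$ is only required to vanish on $\eta T\subsetneq B(0,d\eta)$; so you have located a genuine gap there as well, not just in your own argument. A repair is to replace \eqref{2d-p} by a capacitary corrector that is harmonic on all of $B(0,1/3)\setminus\eta T$ with boundary values $0$ on $\partial(\eta T)$ and $1$ on $\partial B(0,1/3)$: the only inner interface is then $\partial(\eta T)$, where test functions vanish, and the remaining outer jump has the correct $O(|\ln\eta|^{-1})$ size.
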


\begin{proof}
We first  consider the case $d\ge 3$.
Let $\varphi $ be a $Y$-periodic $C^\infty$ function in $\R^d$
such that $\varphi =0$ in $\R^d \setminus \omega_{1, \eta}$.
We need to show that
$$
\int_Y \nabla \psi_\eta \cdot \nabla \varphi\, dx
=\int_Y F_\eta \varphi\, dx
-\int_Y f_\eta \cdot \nabla \varphi\, dx
$$
for some $F_\eta$ and $\eta_\eta$ satisfying \eqref{p-2}.
To this end, 
observe that
$$
\aligned
\int_Y \nabla \psi_\eta \cdot \nabla \varphi\, dx
 & =\int_{B(0, 1/3)\setminus B(0, 1/4)} \nabla \psi_\eta \cdot \nabla \varphi\, dx
 +\int_{B(0, 1/4) \setminus \eta T} \nabla \psi_\eta \cdot \nabla \varphi\, dx\\
 &=I_1 +I_2.
 \endaligned
 $$
 For $I_1$, recall that 
  \begin{equation}\label{e-p-4}
   |\nabla \psi_\eta (x)|\le C \eta^{d-2} \qquad \text{ for } 
 x\in B(0, 1/3) \setminus B(0, 1/4).
 \end{equation}
 
 To handle $I_2$,  using  $\varphi=0$ on $\partial (\eta T)$, we may write
 \begin{equation}\label{e-p-5}
 \aligned
 I_2 & = \int_{\partial B(0, 1/4)} \frac{\partial \psi_\eta}{\partial n} \varphi\, d\sigma,\\
 \endaligned
 \end{equation}
where we also used the fact that $\psi_\eta$ is harmonic in $B(0, 1/4)\setminus {\eta T}$.
Let 
$$
g=\frac{\partial \psi_\eta}{\partial n}= \eta^{-1} n\cdot \nabla \phi_* (x/\eta)
$$
 on $\partial B(0, 1/4)$.
By \eqref{e-p-2},  $ |g | + |\nabla g|\le C \eta^{d-2}$.
Hence, there exists $G \in C^1(B(0, 1/4))$ such that
$G=g$ on $\partial B(0, 1/4)$ and
$|G | + |\nabla G |\le C \eta^{d-2}$ in $B(0, 1/4)$.
It follows that 
$$
\int_{\partial B(0, r )} g \varphi\, d\sigma
=\frac{1}{r} \int_{B(0, r)}  \{ d G + x\cdot \nabla G \} \varphi \, dx
+ \frac{1}{r} \int_{B(0, r)} G (x \cdot \nabla \varphi)\,dx,
$$
where $r=(1/4)$. 
This, together with \eqref{e-p-4}, yields \eqref{p-1} and \eqref{p-2}.

The proof for the case $d=2$ is similar. Indeed,  note that
$$
\aligned
\int_Y \nabla \psi_\eta \cdot \nabla \varphi\, dx
& =\int_{B(0, 1/3)\setminus B(0, d\eta)} \nabla \psi_\eta \cdot \nabla \varphi\, dx\\
& =\int_{\partial B(0, 1/3)} \frac{\partial \psi_\eta}{\partial n} \varphi\, d\sigma\\
&=\frac{3}{\ln (1/3) -\ln (d\eta)}
\int_{\partial B(0, 1/3)} \varphi\, d\sigma\\
&=\frac{9}{\ln (1/3) -\ln (d\eta)}
\int_{B(0, 1/3)}  \left( 2\varphi + x\cdot \nabla \varphi \right) dx,
\endaligned
$$
which yields the estimate \eqref{p-3}.
\end{proof}

We are now in a position to give the proof of Theorem \ref{thm-local}

\begin{proof}[\bf Proof of Theorem \ref{thm-local} ]
Let $u$ be a solution of \eqref{l-0}.
Let $\psi_\eta$ be defined by \eqref{psi}-\eqref{2d-p}.
Note that for any $\alpha \in \R$,
we have $u-\alpha \psi_\eta =0$ on $\partial (\eta T)$ and
$$
-\Delta (u-\alpha \psi_\eta)
=(F-\alpha F_\eta) +\text{\rm div}(f-\alpha f_\eta)
$$
in $\tY\setminus \eta T$.
It follows by Lemma \ref{lemma-l-1} that
\begin{equation}\label{local-a}
\aligned
\left(\int_{Y \setminus \eta T}
|\nabla u |^p\, dx \right)^{1/p}
& \le  |\alpha| \left(\int_{Y \setminus \eta T} |\nabla \psi_\eta|^p\ dx \right)^{1/[p} + C \Phi_p(\eta^{-1}) 
|\alpha | \left( \| F_\eta \|_\infty + \| f_\eta\|_\infty \right)\\
&\qquad\qquad
+ C \Phi_p(\eta^{-1}) 
\left(\int_{\tY \setminus \eta T} \left( |F|^p + |f|^p \right)\, dx \right)^{1/p}\\
& \qquad \qquad
+ C \Phi_p (\eta^{-1}) 
\left(\int_{\tY \setminus B(0, 1/3) } |u-\alpha|^2 \, dx \right)^{1/2},
\endaligned
\end{equation}
where we have used the fact $\psi_\eta =1$ in $\tY \setminus B(0, 1/3) $.
By Lemmas \ref{p0-lemma} and \ref{p-lemma}, if $d\ge 3$,
the first two terms in the right-hand side of \eqref{local-a} are bounded by
$$
C |\alpha|  \eta^{\frac{d}{p}-1}
+ C |\alpha| \Phi_p (\eta^{-1}) \eta^{d-2}
\le C |\alpha | \eta^{\frac{d}{p}-1}.
$$
This, together with \eqref{local-a}, gives \eqref{l-1}.
Similarly, if $d=2$, the first two terms in the right-hand side of \eqref{local-a} are bounded by
$$
C |\alpha | \eta^{\frac{2}{p}-1} |\ln \eta|^{-1},
$$
which yields \eqref{l-1-2d}.
\end{proof}



\section{Proofs of Theorems \ref{main-thm-1} and \ref{main-thm-2}} \label{section-p}

We begin with an estimate for $\| u\|_{L^p(\omega_{\e, \eta})}$.

\begin{lemma}
Let  $1< p< \infty$.
For any $F\in L^p (\omega_{\e, \eta})$ and $f\in L^p(\omega_{\e, \eta}; \R^d)$, the Dirichlet problem 
\eqref{D-01} has a unique solution in $W_0^{1, p}(\Omega_{\e, \eta})$. Moreover, if $2\le p< \infty$, the solution 
satisfies 
\begin{equation}\label{m-1}
\| u\|_{L^p(\omega_{\e, \eta})}
\le C\left\{ \e^2 \eta^{2-d} \| F \|_{L^p(\omega_{\e, \eta})}
+  \e \eta^{1-\frac{d}{2}} \| f\|_{L^p(\omega_{\e, \eta})} \right\} 
\end{equation}
for $d\ge 3$, and
\begin{equation}\label{m-2}
\| u\|_{L^p(\omega_{\e, \eta})}
\le C \left\{ \e^2 |\ln (\eta/2)| \| F \|_{L^p(\omega_{\e, \eta})}
+ \e |\ln (\eta/2)|^{1/2} \| f \|_{L^p(\omega_{\e, \eta})} \right\}
\end{equation}
for $d=2$. The constant $C$ depends only on $d$, $p$ and $c_0$.
\end{lemma}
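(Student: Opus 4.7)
The plan is to handle the Hilbert case $p=2$ first via Lax--Milgram on $W_0^{1,2}(\omega_{\e,\eta})$, which is coercive precisely because of the Poincar\'e inequalities \eqref{P-00}--\eqref{P-02}, and then to pass to $p>2$ by a classical $L^p$ test-function argument. At $p=2$, testing the weak formulation against $u$ itself produces the energy bounds \eqref{E}--\eqref{E-2}, and one further application of \eqref{P-00}--\eqref{P-02} upgrades these to the $L^2$ versions of \eqref{m-1}--\eqref{m-2}.

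For $2<p<\infty$, I would first assume $F, f \in C_0^\infty$ so that interior regularity makes the computation below rigorous (the general case being reached by density and a truncation argument, discussed at the end). Testing the weak formulation against $\varphi = |u|^{p-2} u$ yields
\begin{equation*}
(p-1)\int_{\omega_{\e,\eta}} |u|^{p-2}|\nabla u|^2\, dx
= \int_{\omega_{\e,\eta}} F |u|^{p-2} u\, dx - (p-1)\int_{\omega_{\e,\eta}} |u|^{p-2} f\cdot \nabla u\, dx.
\end{equation*}
H\"older on the $F$-term and Cauchy--Schwarz followed by Young on the $f$-term (absorbing half of the left-hand side) give
\begin{equation*}
\int_{\omega_{\e,\eta}} |u|^{p-2}|\nabla u|^2\, dx \le C_p\bigl(\|F\|_{L^p}\|u\|_{L^p}^{p-1} + \|f\|_{L^p}^2 \|u\|_{L^p}^{p-2}\bigr).
\end{equation*}
The decisive step is to observe that $|u|^{p/2} \in W_0^{1,2}(\omega_{\e,\eta})$, so \eqref{P-00} (respectively \eqref{P-02} when $d=2$) applies and yields
\begin{equation*}
\|u\|_{L^p}^p = \bigl\||u|^{p/2}\bigr\|_{L^2}^2 \le C\e^2\eta^{2-d}\bigl\|\nabla |u|^{p/2}\bigr\|_{L^2}^2 = \tfrac{C p^2}{4}\,\e^2\eta^{2-d}\int_{\omega_{\e,\eta}} |u|^{p-2}|\nabla u|^2\, dx.
\end{equation*}
Chaining the two displays produces $\|u\|_{L^p}^2 \le C\e^2\eta^{2-d}\|F\|_{L^p}\|u\|_{L^p} + C\e^2\eta^{2-d}\|f\|_{L^p}^2$, and one final application of Young's inequality separates the $F$ contribution to yield the $d\ge 3$ bound \eqref{m-1}. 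The estimate \eqref{m-2} is obtained identically after replacing $\e^2\eta^{2-d}$ by $C\e^2|\ln(\eta/2)|$ throughout.

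Existence and uniqueness in $W_0^{1,p}(\omega_{\e,\eta})$ for all $1<p<\infty$ then follow by combining the a priori estimates above with the $W^{1,p}$ bounds on $\nabla u$ from Theorems \ref{main-thm-1}--\ref{main-thm-2}: for $p\ge 2$ by density of smooth compactly supported data in $L^p$, and for $1<p<2$ by the standard duality argument. The main technical obstacle I anticipate is the rigorous use of $|u|^{p-2}u$ as a test function, since for large $p$ this quantity need not belong to $W_0^{1,2}$ merely because $u$ does. This is handled by first running the argument with the Lipschitz truncation $T_k(u)=\max(-k,\min(k,u))$ in place of $u$, so that all integrands are bounded and all terms lie in the correct spaces, and then letting $k\to\infty$ via monotone convergence combined with the uniform a priori bound that emerges.
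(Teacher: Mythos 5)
Your derivation of the a priori estimates \eqref{m-1}--\eqref{m-2} is correct, and it is in fact the same argument the paper invokes: the paper's own proof simply cites \cite[Theorem 3.3]{Shen-2022}, and the introduction describes the cited proof as ``a classical method of test functions and a Poincar\'e inequality.'' That is exactly what you reconstruct. Testing against $|u|^{p-2}u$ (with the truncation $T_k(u)$ to make it rigorous), applying Cauchy--Schwarz and Young to absorb the gradient term, and then controlling $\|u\|_{L^p}^p=\||u|^{p/2}\|_{L^2}^2$ by \eqref{P-00} or \eqref{P-02} is precisely the test-function scheme the authors have in mind; chaining and Young then give the stated powers $\e^2\eta^{2-d}$ and $\e\eta^{1-d/2}$ (resp.\ the logarithmic weights for $d=2$). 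The one substantive flaw is in your final paragraph: you cannot derive existence and uniqueness in $W_0^{1,p}(\omega_{\e,\eta})$ by ``combining the a priori estimates with the $W^{1,p}$ bounds on $\nabla u$ from Theorems \ref{main-thm-1}--\ref{main-thm-2}.'' Those are the main theorems of the paper, whose proofs run through Theorems \ref{thm-m-1} and \ref{thm-m-2}, and both of those use the very lemma you are proving (see the use of \eqref{m-1} and \eqref{m-2} in Section \ref{section-p}). The paper avoids this circularity by citing \cite{Masmoudi-2004, Shen-2022} for the well-posedness; if you wish to be self-contained you would instead establish the $W^{1,p}$ gradient bound independently (which is what the body of the paper does, via the operator $S_{\e,\eta}$ and the large-scale Lipschitz estimate, \emph{before} asserting Theorems \ref{main-thm-1}--\ref{main-thm-2}), or simply quote the external references as the paper does.
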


\begin{proof}
The existence and uniqueness of the solution are known \cite{Masmoudi-2004, Shen-2022}.
The estimates \eqref{m-1}-\eqref{m-2}  for $2\le p<\infty$ were proved in \cite[Theorem 3.3]{Shen-2022} in a general non-periodic setting.
In particular,  the $C^1$ assumption on $T$ is not needed.
\end{proof}

Next, we consider the case $F=0$.

\begin{thm}\label{thm-m-1}
Let $1< p< \infty$. 
For any $f\in L^p(\omega_{\e, \eta}; \R^d )$, the solution of  the Dirichlet problem,
\begin{equation}\label{D-m-1}
-\Delta u=\text{\rm div}(f) \quad \text{ in } \omega_{\e, \eta} \quad
\text{  and } \quad 
u=0 \quad  \text{ on } \partial\omega_{\e, \eta},
\end{equation}
in $W_0^{1, p}(\omega_{\e, \eta})$  satisfies the estimate,
\begin{equation}\label{m-3}
\|\nabla u\|_{L^p(\omega_{\e, \eta})}
\le C \eta^{-d |\frac12 -\frac{1}{p}|} \| f \|_{L^p(\omega_{\e, \eta})},
\end{equation}
for $d\ge 3$, and
\begin{equation}\label{m-3a}
\|\nabla u\|_{L^p(\omega_{\e, \eta})}
\le C \eta^{-2 |\frac12 -\frac{1}{p}|} |\ln (\eta/2)|^{-\frac12} \| f \|_{L^p(\omega_{\e, \eta})},
\end{equation}
for $d=2$ and $p\neq 2$,
where $C$ depends only on $d$, $p$ and $T$.
\end{thm}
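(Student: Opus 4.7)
The plan is to rescale to $\e = 1$ and split into three ranges of $p$. The case $p = 2$ is immediate from the energy estimates \eqref{E}--\eqref{E-2}, since the claimed exponent $-d |\tfrac{1}{2} - \tfrac{1}{p}|$ vanishes at $p = 2$. The case $1 < p < 2$ will follow by duality: given $h \in L^{p'}(\omega_{1,\eta}; \R^d)$, I solve $-\Delta w = \text{div}(h)$ in $\omega_{1,\eta}$ with zero boundary data and use the symmetric identity $\int h \cdot \nabla u = \int f \cdot \nabla w$, which reduces the bound at $p$ to the already-proved bound at $p' > 2$, since $|\tfrac{1}{2} - \tfrac{1}{p'}| = |\tfrac{1}{2} - \tfrac{1}{p}|$.

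Thus the substantive work is the range $2 < p < \infty$. My plan is a cell-by-cell localization. In each translate $k + \tY$ (with $\tY = (1+c_0)Q_1$ and $k \in \mathbb{Z}^d$), apply Theorem \ref{thm-local} with $F = 0$ and $\alpha = \alpha_k := \fint_{k + \tY \setminus B(k, 1/3)} u$. Raising to the $p$-th power and summing over $k$, using the finite overlap of $\{k + \tY\}$, produces three contributions on the right-hand side: (i) an $\alpha_k$-term, namely $|\alpha_k|\,\Psi(\eta)$ with $\Psi(\eta) = \eta^{d/p-1}$ for $d \ge 3$ and $\Psi(\eta) = \eta^{2/p-1}|\ln\eta|^{-1}$ for $d = 2$; (ii) $\Phi_p(\eta^{-1}) \|f\|_{L^p(k + \tY)}$; and (iii) $\Phi_p(\eta^{-1}) \|u - \alpha_k\|_{L^2(k + \tY \setminus B(k, 1/3))}$.

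The three contributions are handled by distinct tools. For (i), H\"older's inequality gives $|\alpha_k|^p \le C \int_{k + \tY} |u|^p$, and the $L^p$ bound on $u$ from \eqref{m-1}--\eqref{m-2} (applied with $F = 0$) produces exactly the target power of $\eta$: $\eta^{d/p - d/2} \|f\|_{L^p}$ for $d \ge 3$, and $\eta^{2/p - 1} |\ln \eta|^{-1/2} \|f\|_{L^p}$ for $d = 2$. For (ii), the summation is trivial and yields $\Phi_p(\eta^{-1}) \|f\|_{L^p}$. For (iii), the Poincar\'e inequality on the fixed connected region $\tY \setminus B(0, 1/3)$ (which contains no holes) bounds $\|u - \alpha_k\|_{L^2}$ by $\|\nabla u\|_{L^2(k + \tY)}$; since this is pointwise comparable to $S_{1,\eta}(0, f)(k)$, the $\ell^p$-sum is controlled by $\|S_{1,\eta}(0, f)\|_{L^p(\R^d)}$, which is at most $C\|f\|_{L^p}$ by Lemma \ref{S-lemma-1}.

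It remains to verify that the $\Phi_p(\eta^{-1})$-contribution is dominated by the target, i.e.\ that $\Phi_p(\eta^{-1}) \le C \eta^{d/p - d/2}$ for $d \ge 3$ (resp.\ $\le C \eta^{2/p - 1} |\ln \eta|^{-1/2}$ for $d = 2$) in each of the sub-ranges $2 < p < d$, $p = d$, $p > d$. The main obstacle I anticipate is the bookkeeping of logarithmic factors in $d = 2$: the factor $|\ln \eta|^{1/2}$ from the $L^p$ bound on $u$ must combine with the factor $|\ln \eta|^{-1}$ in $\Psi(\eta)$ to yield the asserted $|\ln \eta|^{-1/2}$, and the $\Phi_p$-comparisons near $p = d$ in $d \ge 3$ require a careful case analysis.
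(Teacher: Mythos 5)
Your proposal is correct and follows essentially the same route as the paper: reduce by rescaling and duality to $\e=1$, $p>2$; apply the cell-local estimate of Theorem \ref{thm-local} with $\alpha_k=\fint_{k+\tY\setminus B(k,1/3)}u$; control the three resulting terms by H\"older plus the $L^p$ bound \eqref{m-1}--\eqref{m-2} on $u$, the trivial summation, and the Poincar\'e inequality combined with the $L^p$ bound for $S_{1,\eta}(0,f)$ from Lemma \ref{S-lemma-1}; then absorb $\Phi_p(\eta^{-1})$ into $\eta^{d/p-d/2}$ (resp.\ $\eta^{2/p-1}|\ln\eta|^{-1/2}$ for $d=2$). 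This is exactly the argument given in Section \ref{section-p}.
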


\begin{proof}
By rescaling and duality  we may assume that $\e=1$ and $p>2$.
Moreover, we only need to prove the estimates \eqref{m-3}-\eqref{m-3a}  for $\eta>0$ sufficiently small.

We first consider the case $d\ge 3$.
Let $u$ be a solution of \eqref{D-m-1} with $\e=1$.
It follows by Theorem \ref{thm-local} that
$$
\aligned
\int_{k+ (Y\setminus \eta T)}
|\nabla u|^p\, dx
 & \le C |\alpha |^p \eta^{d-p}
+ C  [\Phi_p (\eta^{-1})]^p
\int_{k+ (\tY\setminus \eta T)} |f|^p\, dx\\
& \qquad \qquad
+ C[ \Phi_p (\eta^{-1})]^p
\left(
\int_{k+ (\tY \setminus B(0, 1/3) )} | u-\alpha |^2 \, dx \right)^{p/2}
\endaligned
$$
for any $k\in \mathbb{Z}^d$ and $\alpha \in \R$.
Choose
$$
\alpha = \fint_{k+ (\tY\setminus B(0, 1/3) )} u\, dx.
$$
By using the Poincar\'e inequality we obtain 
$$
\aligned
\int_{k+ (Y\setminus \eta T)}
|\nabla u|^p\, dx
 & \le C  \eta^{d-p} \int_{k + (\tY \setminus \eta T)} |u|^p\, dx 
+ C [ \Phi_p (\eta^{-1})]^p \int_{k+ (\tY\setminus \eta T)} |f|^p\, dx\\
& \qquad \qquad
+ C [\Phi_p (\eta^{-1})]^p
\left(\int_{k+(\tY\setminus \eta T)} |\nabla u|^2\right)^{p/2} \\
& \le C  \eta^{d-p} \int_{k + (\tY \setminus \eta T)} |u|^p\, dx 
+ C [ \Phi_p (\eta^{-1})]^p \int_{k+ (\tY\setminus \eta T)} |f|^p\, dx\\
& \qquad \qquad
+ C[ \Phi_p (\eta^{-1})]^p
\int_{k+Y} |S_{1, \eta} (0, f) |^p,
\endaligned
$$
where the operator $S_{1, \eta}$ is defined by \eqref{S}.
By summing over $k \in \mathbb{Z}^d$ we deduce that
$$
\aligned
\| \nabla u \|_{L^p(\omega_{1, \eta})}
 & \le C \eta^{\frac{d}{p}-1} \| u \|_{L^p(\omega_{1, \eta})}
+ C \Phi_p (\eta^{-1})  \left\{
\| f\|_{L^p(\omega_{1, \eta})}
+ \| S_{1, \eta} (0, f) \|_{L^p(\R^d)} \right\}\\
& \le C  \eta^{-d (\frac12-\frac{1}{p})} 
\| f\|_{L^p(\omega_{1, \eta})},
\endaligned
$$
where we have used \eqref{m-1} and \eqref{S-01} as well as the observation $\Phi_p (\eta^{-1}) \le C \eta^{\frac{d}{p}-\frac{d}{2}}$
in the case $d\ge 3$
 for the last inequality.
This gives \eqref{m-3} with $\e=1$ and $p>2$ for the case $d\ge 3$.

The proof for the case $d=2$ is similar. Using \eqref{l-1-2d}, we obtain
$$
\| \nabla u \|_{L^p(\omega_{1, \eta})}
\le C \eta^{\frac{2}{p}-1} 
 |\ln \eta|^{-1}\left\{  \| u \|_{L^p(\omega_{1, \eta})}
+
\| f\|_{L^p(\omega_{1, \eta})} + \| S_{1, \eta} (0, f) \|_{L^p(\R^d) } \right\}.
$$
The desired estimate then follows from \eqref{m-2} and \eqref{S-01}.
\end{proof}

We now consider the case $f=0$.

\begin{thm}\label{thm-m-2}
Let $1<p< \infty$.
For any $F\in L^p(\omega_{\e, \eta})$, the solution of the Dirichlet problem,
\begin{equation}\label{D-m-2}
-\Delta u=F \quad \text{ in } \omega_{\e, \eta} \quad \text{ and } \quad
u=0 \quad \text{ on } \partial\omega_{\e, \eta},
\end{equation}
 in $W_0^{1, p}(\omega_{\e, \eta} )$  satisfies the estimate,
\begin{equation}\label{m-20}
\|\nabla u \|_{L^p(\omega_{\e, \eta})}
\le \left\{
\aligned
& C \e \eta^{1-\frac{d}{2}} \| F \|_{L^p(\omega_{\e, \eta})} & \quad & \text{ for } 1< p\le 2,\\
& C \e\eta^{1-d +\frac{d}{p}} \| F \|_{L^p(\omega_{\e, \eta})} & \quad & \text{ for } 2< p< \infty
\endaligned
\right.
\end{equation}
for $d\ge 3$, and
\begin{equation}\label{m-20a}
\|\nabla u \|_{L^p(\omega_{\e, \eta})}
\le \left\{
\aligned
& C \e  |\ln (\eta/2)|^{\frac12} \| F \|_{L^p(\omega_{\e, \eta})} & \quad & \text{ for } 1< p\le 2,\\
& C \e\eta^{-1 +\frac{2}{p}}  \| F \|_{L^p(\omega_{\e, \eta})} & \quad & \text{ for } 2< p< \infty
\endaligned
\right.
\end{equation}
for $d=2$.
\end{thm}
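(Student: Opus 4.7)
The plan is to mirror the strategy used for Theorem \ref{thm-m-1}, with $F$ now playing the role previously assigned to $f$. By rescaling, it suffices to treat $\e=1$. The case $p=2$ is already contained in the energy estimates \eqref{E}-\eqref{E-2}. For $p>2$ I argue directly via the local estimate of Section \ref{section-loc2}; for $1<p<2$ I use a duality argument based on \eqref{m-1}-\eqref{m-2}.

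\emph{The range $2<p<\infty$.} Apply Theorem \ref{thm-local} in each translated cell $k+\tY$, $k\in\mathbb{Z}^d$, with $f\equiv 0$ and the specific choice
\[
\alpha_k=\fint_{k+(\tY\setminus B(0,1/3))} u\,dx.
\]
Replacing $\|u-\alpha_k\|_{L^2(k+(\tY\setminus B(0,1/3)))}$ by $\|\nabla u\|_{L^2(k+\tY)}$ via Poincar\'e's inequality, raising to the $p$-th power, and summing over $k$ rewrites the gradient contribution, by the definition \eqref{S} of $S_{1,\eta}$, as $\|S_{1,\eta}(F,0)\|_{L^p(\R^d)}^p$. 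Jensen's inequality together with the finite overlap of the family $\{k+\tY\}$ gives $\sum_k|\alpha_k|^p\le C\|u\|_{L^p(\omega_{1,\eta})}^p$, yielding
\[
\|\nabla u\|_{L^p(\omega_{1,\eta})} \le C\eta^{\frac{d}{p}-1}\|u\|_{L^p(\omega_{1,\eta})} + C\Phi_p(\eta^{-1})\bigl\{\|F\|_{L^p(\omega_{1,\eta})}+\|S_{1,\eta}(F,0)\|_{L^p(\R^d)}\bigr\}.
\]
For $d\ge 3$, inserting \eqref{m-1} into the first term produces $C\eta^{1-d+d/p}\|F\|_{L^p(\omega_{1,\eta})}$; combining \eqref{S-21} with the explicit formula \eqref{Phi} for $\Phi_p(\eta^{-1})$ shows that the remaining two terms are dominated by it in each subrange $2<p<d$, $p=d$, and $p>d$. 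The case $d=2$ is handled identically, substituting \eqref{m-2}, the $d=2$ branches of \eqref{S-21} and \eqref{Phi}, and the logarithmic replacement.

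\emph{The range $1<p<2$.} For arbitrary $g\in C_0^\infty(\omega_{1,\eta};\R^d)$, let $v\in W_0^{1,p'}(\omega_{1,\eta})$ solve
\[
-\Delta v=-\text{\rm div}(g)\ \text{ in }\ \omega_{1,\eta},\qquad v=0\ \text{ on }\ \partial\omega_{1,\eta},
\]
whose existence and uniqueness is furnished by the first lemma of this section since $p'>2$. Multiplying the equation for $u$ by $v$ and the equation for $v$ by $u$ and integrating by parts yields $\int\nabla u\cdot g\,dx=\int Fv\,dx$, so
\[
\Bigl|\int_{\omega_{1,\eta}}\nabla u\cdot g\,dx\Bigr|\le\|F\|_{L^p(\omega_{1,\eta})}\|v\|_{L^{p'}(\omega_{1,\eta})}.
\]
For $d\ge 3$, applying \eqref{m-1} to $v$ with data $F=0$, $f=-g$ gives $\|v\|_{L^{p'}}\le C\eta^{1-d/2}\|g\|_{L^{p'}}$; for $d=2$, \eqref{m-2} gives $\|v\|_{L^{p'}}\le C|\ln(\eta/2)|^{1/2}\|g\|_{L^{p'}}$. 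Taking the supremum over $g$ delivers the asserted bound in both dimensions.

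The main subtlety lies in the $p>2$ case: one must verify that the three $\eta$-powers produced by the $|\alpha_k|$-sum, by the bare $\Phi_p\|F\|$ term, and by the $\Phi_p\|S_{1,\eta}(F,0)\|$ term combine so that the $|\alpha_k|$-contribution emerges with the sharp exponent $1-d+d/p$ (or its $d=2$ analogue) and the other two are absorbed. This comparison is delicate near the thresholds $p=d$ and $p=2$, where logarithmic factors enter through $\Phi_p$, but in every regime the exponent in the first term is the most singular as $\eta\to 0^+$.
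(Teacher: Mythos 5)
Your proof of the case $2<p<\infty$ is essentially identical to the paper's: both apply Theorem \ref{thm-local} cell by cell with $\alpha_k$ taken to be the average of $u$ over $k+(\tY\setminus B(0,1/3))$, convert $u-\alpha_k$ into a gradient via Poincar\'e, sum, and then feed in \eqref{m-1}--\eqref{m-2}, \eqref{S-21}, and the explicit form of $\Phi_p$ in \eqref{Phi} to check the exponents; the $p=2$ case in both is the energy estimate. For $1<p<2$, however, you take a genuinely different route. The paper disposes of that range by citing \cite[Theorem 5.2]{Shen-2022}, which proves \eqref{m-20}--\eqref{m-20a} directly for $1<p\le 2$. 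You instead derive it by duality: solve $-\Delta v=-\text{\rm div}(g)$ in $W_0^{1,p'}(\omega_{1,\eta})$ for $g\in C_0^\infty(\omega_{1,\eta};\R^d)$, pair the two equations to get $\int\nabla u\cdot g=\int Fv$, then bound $\|v\|_{L^{p'}}$ by $C\eta^{1-d/2}\|g\|_{L^{p'}}$ (resp.\ $C|\ln(\eta/2)|^{1/2}\|g\|_{L^{p'}}$) using \eqref{m-1}--\eqref{m-2} since $p'>2$, and dualize. This is exactly the relation $C_p(\e,\eta)=B_{p'}(\e,\eta)$ that the paper states abstractly later in Section \ref{section-p}; your version makes it concrete and trades a citation for a short self-contained argument. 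Both routes ultimately lean on \cite{Shen-2022}, but on different pieces of it -- the paper on the gradient bound for $1<p\le 2$, you on the $L^{p'}$ bound for $u$ recorded in the section's first lemma -- so your argument is a valid and slightly more economical alternative.
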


\begin{proof}
The case $1<p\le 2$ for $d\ge 2$  was proved in \cite[Theorem 5.2]{Shen-2022} in a general non-periodic setting.
To treat the case $2<p< \infty$, we may assume $\e=1$ by rescaling.
Suppose $d\ge 3$ and $u$ is a solution of \eqref{D-m-2}.
As in the proof of Theorem \ref{thm-m-1}, using
 Theorem \ref{thm-local}, we may deduce by summation  that 
$$
\aligned
\|\nabla u \|_{L^p(\omega_{1, \eta})}
 & \le C \eta^{\frac{d}{p}-1} \| u\|_{L^p (\omega_{1, \eta})}
+ C \Phi_p (\eta^{-1}) 
\left\{ \| F \|_{L^p(\omega_{1, \eta})}
+ \| S_{1, \eta} (F, 0) \|_{L^p(\omega_{1, \eta})} \right\}\\
&\le C \eta^{1-d + \frac{d}{p}} \| F \|_{L^p(\omega_{1, \eta})}
+  C \Phi_p (\eta^{-1}) 
\left\{ \| F \|_{L^p(\omega_{1, \eta})}
+ \eta^{1-\frac{d}{2}} \| F \|_{L^p(\omega_{1, \eta})} \right\}\\
&\le C \eta^{1-d +\frac{d}{p}} \| F \|_{L^p(\omega_{1, \eta})},
\endaligned
$$
where we have used \eqref{m-1} and \eqref{S-21} for the second inequality.
The proof for the case $d=2$ is similar.
By Theorem \ref{thm-local}, we obtain 
$$
\|\nabla u\|_{L^p(\omega_{1, \eta})}
\le C \eta^{\frac{2}{p}-1} |\ln \eta|^{-1} \| u \|_{L^p(\omega_{1, \eta})}
+ C \eta^{ \frac{2}{p}-1} |\ln \eta|^{-1} 
\left\{ \| F \|_{L^p(\omega_{1, \eta})}
+ \| S_{1, \eta} (F, 0)\|_{L^p(\omega_{1, \eta})} \right\}
$$ 
which, together with  \eqref{m-2} and \eqref{S-21}, yields \eqref{m-20a} for $2<p<\infty$.
\end{proof}

For $1< p< \infty$ and $\e, \eta \in (0, 1]$,
let $A_p(\e, \eta)$, $B_p (\e, \eta)$, $C_p(\e, \eta)$ and $D_p(\e, \eta)$ be the smallest constants for which 
the inequalities  \eqref{A-B} and \eqref{C-D} hold.
Clearly, $A_2 (\e, \eta)\le 1$.
By duality, $C_p(\e, \eta) =B_{p^\prime} (\e, \eta)$, where $p^\prime =\frac{p}{p-1}$ 
(see \cite{Shen-2022}).
It follows from Theorems \ref{thm-m-1} and \ref{thm-m-2}  that
\begin{equation}\label{A-p-f}
A_p (\e, \eta)
\le \left\{
\aligned
 &C \eta^{-d |\frac12 -\frac{1}{p}|} & \quad & \text{ if } d\ge 3,\\
 & C \eta^{-2 |\frac12-\frac{1}{p}|} |\ln (\eta/2)|^{-\frac12} &\quad &  \text{ if } d=2 \text{ and } p\neq 2,
 \endaligned
 \right.
 \end{equation}
 and
 \begin{equation}\label{B-p-f}
 B_p(\e, \eta)
 =C_{p^\prime}(\e, \eta)
\le
\left\{
\aligned
& C \e \eta^{1-\frac{d}{2}} & \quad & \text{ if } d\ge 3 \text{ and } 1< p\le 2,\\
&C \e |\ln (\eta/2)|^{\frac12} & \quad & \text{ if } d=2 \text{ and } 1< p\le 2, \\
& C \e \eta^{-1 +\frac{2}{p}} & \quad & \text{ if }  d\ge 2 \text{ and } 2< p< \infty,
\endaligned
\right.
\end{equation}
 where $C$ depends only on $d$, $p$ and $T$.
 Furthermore, it was proved in \cite{Shen-2022} that 
 \begin{equation}\label{D-p-f}
 D_p (\e, \eta)
 \le  \left\{
 \aligned
&  C \e^2 \eta^{2-d} & \quad & \text{ if } d\ge 3,\\
& C \e^2 |\ln (\eta/2)| & \quad & \text{ if } d=2.
\endaligned
\right.
\end{equation}

\begin{proof}[\bf Proofs of Theorems \ref{main-thm-1} and \ref{main-thm-2}]
The estimates \eqref{m-e-1} and \eqref{m-e-1a} follow from \eqref{A-p-f} and \eqref{B-p-f} by linearity, 
while \eqref{m-e-2} and \eqref{m-e-2a} follow from \eqref{B-p-f} and \eqref{D-p-f}.
As we mentioned in the introduction, the sharpness of the estimates \eqref{A-p-f}-\eqref{D-p-f}   was proved in \cite{Shen-2022}.
\end{proof}

\noindent{\bf Declaration of Interest: none.}

 \bibliographystyle{amsplain}
 
\bibliography{Shen-Wallace.bbl}

\providecommand{\bysame}{\leavevmode\hbox to3em{\hrulefill}\thinspace}
\providecommand{\MR}{\relax\ifhmode\unskip\space\fi MR }
\providecommand{\MRhref}[2]{%
  \href{http://www.ams.org/mathscinet-getitem?mr=#1}{#2}
}
\providecommand{\href}[2]{#2}
\begin{thebibliography}{10}

\bibitem{Allaire-90a}
G.~Allaire, \emph{Homogenization of the {N}avier-{S}tokes equations in open
  sets perforated with tiny holes. {I}. {A}bstract framework, a volume
  distribution of holes}, Arch. Rational Mech. Anal. \textbf{113} (1990),
  no.~3, 209--259.

\bibitem{Allaire-90}
\bysame, \emph{Homogenization of the {N}avier-{S}tokes equations in open sets
  perforated with tiny holes. {II}. {N}oncritical sizes of the holes for a
  volume distribution and a surface distribution of holes}, Arch. Rational
  Mech. Anal. \textbf{113} (1990), no.~3, 261--298.

\bibitem{AGG-1997}
C.~Amrouche, V.~Girault, and J.~Giroire, \emph{Dirichlet and {N}eumann exterior
  problems for the {$n$}-dimensional {L}aplace operator: an approach in
  weighted {S}obolev spaces}, J. Math. Pures Appl. (9) \textbf{76} (1997),
  no.~1, 55--81.

\bibitem{AKS-2020}
S.~Armstrong, T.~Kuusi, and C.~Smart, \emph{Large-scale analyticity and unique
  continuation for periodic elliptic equations}, arXiv:2004.01199v1.

\bibitem{JK}
D.~Jerison and C.~Kenig, \emph{The inhomogeneous {D}irichlet problem in
  {L}ipschitz domains}, J. Funct. Anal. \textbf{130} (1995), no.~1, 161--219.

\bibitem{Jing-2020}
W.~Jing, \emph{A unified homogenization approach for the {D}irichlet problem in
  perforated domains}, SIAM J. Math. Anal. \textbf{52} (2020), no.~2,
  1192--1220.

\bibitem{Masmoudi-2004}
N.~Masmoudi, \emph{Some uniform elliptic estimates in a porous medium}, C. R.
  Math. Acad. Sci. Paris \textbf{339} (2004), no.~12, 849--854.

\bibitem{Shen-2005}
Z.~Shen, \emph{Bounds of {R}iesz transforms on {$L^p$} spaces for second order
  elliptic operators}, Ann. Inst. Fourier (Grenoble) \textbf{55} (2005), no.~1,
  173--197.

\bibitem{Shen-book}
\bysame, \emph{Periodic homogenization of elliptic systems}, Operator Theory:
  Advances and Applications, vol. 269, Birkh\"{a}user/Springer, Cham, 2018,
  Advances in Partial Differential Equations (Basel).

\bibitem{Shen-2022}
\bysame, \emph{{Uniform estimates for Dirichlet problems in perforated
  domains}}, arXiv 2208.12198 (2022).

\bibitem{Stevenson-1991}
R.~Stevenson, \emph{Discrete {S}obolev spaces and regularity of elliptic
  difference schemes}, RAIRO Mod\'{e}l. Math. Anal. Num\'{e}r. \textbf{25}
  (1991), no.~5, 607--640.

\bibitem{Verchota-1982}
G.~Verchota, \emph{Layer potentials and regularity for the {D}irichlet problem
  for {L}aplace's equation in {L}ipschitz domains}, J. Funct. Anal. \textbf{59}
  (1984), no.~3, 572--611.

\end{thebibliography}

\bigskip

\begin{flushleft}

Zhongwei Shen (corresponding author),
Department of Mathematics,
University of Kentucky,
Lexington, Kentucky 40506,
USA.
E-mail: zshen2@uky.edu
\end{flushleft}

\begin{flushleft}

Jamison Wallace, 
Department of Mathematics,
University of Kentucky,
Lexington, Kentucky 40506,
USA.
E-mail: Jamison.Wallace@uky.edu
\end{flushleft}

\bigskip

\end{document}